\documentclass[leqno,12 pt]{article}

\usepackage{amssymb,latexsym}
\usepackage{amsmath}               
\usepackage{amsfonts}       
\usepackage{amsthm}                
\usepackage{amssymb}
\usepackage{color}
\usepackage{verbatim}
\usepackage{comment}
\usepackage{graphicx}
\usepackage[margin=1in]{geometry}

\numberwithin{equation}{section}

\theoremstyle{plain}
\newtheorem{theorem}[equation]{Theorem}
\newtheorem{lemma}[equation]{Lemma}

\newtheorem{corollary}[equation]{Corollary}

\newtheorem{proposition}[equation]{Proposition}

\newtheorem{o.problem}[equation]{Question}

\theoremstyle{definition}
\newtheorem{definition}[equation]{Definition}

\newtheorem{remark}[equation]{Remark}

\def\C{\mathbb{C}}
\def\N{\mathbb{N}}
\def\Z{\mathbb{Z}}
\def\Q{\mathbb{Q}}
\def\R{\mathbb{R}}

\DeclareMathOperator{\Gal}{Gal}

\DeclareMathOperator{\Mon}{Mon}
\DeclareMathOperator{\charp}{char}

\begin{document}


\title{Diophantine equations and the monodromy groups}

\author{Dijana Kreso and Robert F.\@ Tichy\\
\footnotesize Graz University of Technology\\
\footnotesize Steyrergasse 30/II, 8010 Graz, Austria\\
\footnotesize \texttt{kreso@math.tugraz.at, tichy@tugraz.at}}
\date{}
\maketitle


\begin{abstract}  
We study Diophantine equations of type $f(x)=g(y)$, where both $f$ and $g$ have at least two distinct critical points and equal critical values at at most two distinct critical points. Some classical families  of polynomials $(f_n)_n$ are such that $f_n$ satisfies these assumptions for all $n$. Our results cover and generalize several results in the literature on the finiteness of integral solutions to such equations. In doing so, we  analyse the properties of the monodromy groups of such polynomials. We show that if $f$ has coefficients in a field $K$,  at least two distinct critical points and all distinct critical values, and $\charp(K)\nmid \deg f$, then the monodromy group of $f$ is a doubly transitive permutation group. This is the same as saying that $(f(x)-f(y))/(x-y)$ is  irreducible  over $K$. In particular,  $f$ cannot be represented as a composition of lower degree polynomials. We further show that if $f$ has at least two distinct critical points and equal critical values at at most two of them, and if $f(x)=g(h(x))$ with $g, h\in K[x]$ and $\deg g>1$, then either $\deg h\leq 2$, or $f$ is of special type. In the latter case, in particular, $f$ has no three simple critical points, nor five distinct critical points.
\end{abstract} 

{\bf Keywords:} Diophantine equations, monodromy group,  permutation groups, polynomial decomposition.
 
\section{Introduction}

\qquad Diophantine equations of type $f(x)=g(y)$ have been of long-standing interest to number theorists. A defining equation of an elliptic curve  is a prominent example of such equations. By Siegel's classical theorem~\cite{S29}, it follows that an affine algebraic curve defined over a number field has only finitely many $S$-integral points, unless it has genus zero and no more than two points
at infinity. Ever since Siegel's theorem, one of the driving questions was to classify polynomials $f, g$  for which the equation $f(x)=g(y)$ has infinitely many solutions  in $S$-integers $x, y$. The classification was completed by Bilu and Tichy~\cite{BT00} in 2000,  building on the work of  Fried and Schinzel. It turns out that for the curve $f(x)-g(y)=0$ to have genus zero and no more than two points at infinity, $f$ and $g$ must be representable as a composition of lower degree polynomials in a certain prescribed way. 

The possible ways of writing a polynomial as a composition of lower degree polynomials were studied by several authors, starting with Ritt~\cite{R22} in the 1920s. Results on this topic have many applications to various areas of mathematics. See \cite{S00, ZM} for an overview of the theory and applications. 

The theorem of Bilu and Tichy was used to prove the finiteness of integral solutions to various equations of type $f(x)=g(y)$ with $f, g\in \Q[x]$, see our recent survey paper \cite{KT} and the references therein. In this paper, we prove two theorems which simultaneously generalize many of these results.

For a  number field $K$,  a finite set $S$ of places of $K$ that contains all Archimedean places and  the ring $\mathcal{O}_S$ of $S$-integers of $K$, we say that the equation $f(x)=g(y)$ has infinitely many solutions $x, y$ with a bounded $\mathcal{O}_S$-denominator if  there exists a nonzero $\delta\in \mathcal{O}_S$ such that there are infinitely many solutions $x, y\in K$ with $\delta x, \delta y\in \mathcal{O}_S$.



For a polynomial $f$, the roots of the derivative $f'$ are called \emph{critical points}, and the values of $f$ at critical points are called \emph{critical values}. If for critical points $\beta_i$'s  of $f$, one has $f(\beta_i)\neq f(\beta_j)$ when $\beta_i\neq \beta_j$, then $f$ is said to have \emph{all distinct critical values}. 

\begin{theorem}\label{DEM}
Let $K$ be a number field, $S$ a finite set of places of $K$ that contains all Archimedean places, $\mathcal{O}_S$  the ring of $S$-integers of $K$, and $f,g\in K[x]$ with $\deg f\geq 3$, $\deg g\geq 3$. If $f$ and $g$  both have at least two distinct critical points and all distinct critical values, then  the equation $f(x)=g(y)$ has infinitely many solutions with a bounded $\mathcal{O}_S$-denominator if and only if $f(x)=g(\mu(x))$ for some linear $\mu\in K[x]$. 
\end{theorem}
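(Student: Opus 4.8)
The ``if'' direction is immediate: if $f(x)=g(\mu(x))$ with $\mu\in K[x]$ linear, then $(t,\mu(t))$ solves $f(x)=g(y)$ for every $t$, and choosing a nonzero $\delta\in\mathcal{O}_S$ absorbing the denominators of the coefficients of $\mu$ yields infinitely many solutions with bounded $\mathcal{O}_S$-denominator as $t$ ranges over $\mathcal{O}_S$.

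For the converse, the plan is to combine the Bilu--Tichy criterion with the indecomposability of $f$ and $g$. Since $f(x)=g(y)$ has infinitely many solutions with bounded $\mathcal{O}_S$-denominator, the theorem of Bilu and Tichy~\cite{BT00} provides $\phi\in K[x]$, linear $\lambda,\mu\in K[x]$, and a standard pair $(f_1,g_1)$ over $K$ (after possibly switching $f$ and $g$) with $f=\phi\circ f_1\circ\lambda$ and $g=\phi\circ g_1\circ\mu$. Because $K$ is a number field, $\charp(K)=0$, so the hypotheses of the monodromy theorem quoted above are satisfied by both $f$ and $g$; hence both are indecomposable. Reading $f=\phi\circ(f_1\circ\lambda)$ and $g=\phi\circ(g_1\circ\mu)$ and recalling $\deg(f_1\circ\lambda)=\deg f_1$, indecomposability forces either $\deg\phi=1$, or else $\deg f_1=\deg g_1=1$.

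If $\deg f_1=\deg g_1=1$, then $f=\phi\circ\ell_1$ and $g=\phi\circ\ell_2$ for linear $\ell_1,\ell_2\in K[x]$, whence $f=g\circ(\ell_2^{-1}\circ\ell_1)$, the desired conclusion with $\mu=\ell_2^{-1}\circ\ell_1$. It thus remains to rule out $\deg\phi=1$. In that case $f$ is linearly equivalent to $f_1$ and $g$ to $g_1$; since having at least two distinct critical points and all distinct critical values is invariant under pre- and post-composition with linear polynomials, both $f_1$ and $g_1$ inherit this property, while $\deg f_1=\deg f\geq3$ and $\deg g_1=\deg g\geq3$. The heart of the argument is then to show that no standard pair with both components of degree $\geq3$ can have both components enjoying this property.

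This final step is a case-by-case inspection of the five kinds of standard pairs, and I expect it to be the main obstacle. For the first kind one component is $x^m$, which has the single critical point $0$; for the fifth kind the component $(ax^2-1)^3$ has the two distinct critical points $\pm a^{-1/2}$ sharing the critical value $0$; and the second kind has a component of degree $2<3$. The delicate cases are the Dickson pairs (third and fourth kinds), where I would use the parametrization $D_k(2\sqrt{a}\cos\theta,a)=2a^{k/2}\cos(k\theta)$ to show that $D_k$ has exactly two distinct critical values for every $k\geq3$, so that for $k\geq4$ its $k-1\geq3$ critical points cannot have pairwise distinct critical values. In the third kind coprimality of $m,n$ excludes $m=n=3$ and forces some index $\geq4$, while in the fourth kind both indices are even and hence $\geq4$; in every case a component fails the critical-value hypothesis. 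This contradiction eliminates the case $\deg\phi=1$ and completes the proof.
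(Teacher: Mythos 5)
There is a genuine gap: you invoke the Bilu--Tichy criterion as producing a \emph{standard} pair, but over a number field $K$ the criterion (Theorem~\ref{T:BT}) produces a standard \emph{or specific} pair $(f_1,g_1)$, and your proof never treats the specific pairs. This omission is not cosmetic, because the specific pair is precisely where your key claim fails. You assert that ``no standard pair with both components of degree $\geq 3$ can have both components enjoying this property,'' and you then derive a contradiction in every case; but the specific pair with $d=\gcd(k,l)=3$ and $(k,l)=(3,3)$ is $\left(D_3(x,a),\,-D_3(x/2,a)\right)$ (since $\cos(\pi/3)=1/2$), and $D_3(x,a)=x^3-3ax$ has exactly two distinct critical points $\pm\sqrt{a}$ with the two \emph{distinct} critical values $\mp 2a^{3/2}$ (for $a\neq 0$); the same is true of $-D_3(x/2,a)$. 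So both components of this pair satisfy the hypotheses of the theorem, no contradiction is available, and indeed the equation $f_1(x)=g_1(y)$ does have infinitely many admissible solutions. The correct treatment of this case (as in the paper) is to observe that $g_1(-2x)=f_1(x)$, so that when $\deg\phi=1$ one still lands in the allowed conclusion $f(x)=g(\mu(x))$ for a linear $\mu\in K[x]$; the specific pairs with $(k,l)\neq(3,3)$ have $\max(k,l)\geq 6$ (as $d\geq 3$ divides both degrees) and are excluded by exactly your critical-value count, since $D_k$ with $k\geq 6$ has $k-1\geq 5$ simple critical points but at most two distinct critical values (Corollary~\ref{critdick}). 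Without this case analysis your proof, as written, would ``prove'' that the conclusion $f=g\circ\mu$ can only arise from $\deg f_1=\deg g_1=1$, which is false.

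Aside from this omission, your handling of the five kinds of standard pairs is sound and close to the paper's: kinds one, two and five are disposed of essentially as you say (the paper rules out the fifth kind via decomposability of $(ax^2-1)^3$ rather than its repeated critical value, but both work), and your treatment of the Dickson pairs of the third and fourth kinds by counting critical values is a legitimate alternative to the paper's argument, which instead uses double transitivity of $\Mon(f)$ over $\overline{K}$ (Proposition~\ref{distroots} and Lemma~\ref{doublyL}) against the explicit factorization \eqref{factor} of $D_n(x,a)-D_n(y,a)$. Your route via the identity $D_k(2\sqrt{a}\cos\theta,a)=2a^{k/2}\cos(k\theta)$ is fine over a number field after embedding into $\C$, and is in fact the mechanism behind Corollary~\ref{critdick}, which the paper itself uses in the proof of Theorem~\ref{DEM22}. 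So the repair needed is localized: add the specific-pair case, rule out its high-degree instances by your pigeonhole argument, and check that the $(3,3)$ instance yields $f(x)=g(\mu(x))$ rather than a contradiction.
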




\begin{corollary}\label{tri}
Let $K$ be a number field, $S$ a finite set of places of $K$ that contains all Archimedean places and $\mathcal{O}_S$  the ring of $S$-integers of $K$. Let $a_1, a_2, a_3, b_1, b_2\in K$ with $a_1 a_2b_1b_2\neq 0$. Let further $n_1, n_2, m_1, m_2\in \N$ be such that  $n_1>n_2$, $m_1>m_2$, $\gcd(n_1, n_2)=1$, $\gcd(m_1, m_2)=1$ and $ n_1, m_1\geq 3$.
Then the equation
\begin{equation}\label{stan}
a_1x^{n_1}+a_2x^{n_2}+a_3=b_1y^{m_1}+b_2y^{m_2}
\end{equation}
has infinitely many solutions with a bounded $\mathcal{O}_S$-denominator if and only if  for some linear $\mu\in K[x]$ we have
\begin{equation}\label{lineartri}
a_1x^{n_1}+a_2x^{n_2}+a_3=(b_1x^{m_1}+b_2x^{m_2})\circ \mu(x).
\end{equation}
\end{corollary}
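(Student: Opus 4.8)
The plan is to read \eqref{stan} as $f(x)=g(y)$ with
\[
f(x):=a_1x^{n_1}+a_2x^{n_2}+a_3,\qquad g(y):=b_1y^{m_1}+b_2y^{m_2},
\]
so that $\deg f=n_1\geq 3$ and $\deg g=m_1\geq 3$, and the asserted equivalence \eqref{lineartri} is exactly the statement ``$f(x)=g(\mu(x))$ for some linear $\mu$''. Everything therefore reduces to checking that $f$ and $g$ fulfil the two structural hypotheses of Theorem~\ref{DEM}: having at least two distinct critical points and all distinct critical values. By symmetry I would verify these only for $f$, the argument for $g$ being identical after replacing $(n_1,n_2,a_1,a_2)$ by $(m_1,m_2,b_1,b_2)$ and $a_3$ by $0$.

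For the critical points I would factor
\[
f'(x)=x^{n_2-1}\bigl(a_1n_1x^{n_1-n_2}+a_2n_2\bigr).
\]
Since $a_1a_2\neq 0$, the second factor has $n_1-n_2\geq 1$ distinct nonzero roots, namely the $(n_1-n_2)$-th roots of $c:=-a_2n_2/(a_1n_1)\neq 0$, and $0$ is a root of $f'$ exactly when $n_2\geq 2$. Counting, there are $1+(n_1-n_2)\geq 2$ distinct critical points when $n_2\geq 2$, and $n_1-1\geq 2$ of them when $n_2=1$, so $f$ always has at least two distinct critical points.

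The heart of the matter is showing that the critical values are pairwise distinct, and this is where the coprimality $\gcd(n_1,n_2)=1$ is used. For a nonzero critical point $\beta$ I would use $\beta^{n_1-n_2}=c$ to rewrite
\[
f(\beta)=\beta^{n_2}\bigl(a_1\beta^{n_1-n_2}+a_2\bigr)+a_3=a_2\,\frac{n_1-n_2}{n_1}\,\beta^{n_2}+a_3,
\]
where the coefficient $a_2(n_1-n_2)/n_1$ is nonzero. Two nonzero critical points differ by an $(n_1-n_2)$-th root of unity $\zeta$, hence share a critical value only if $\zeta^{n_2}=1$; together with $\zeta^{n_1-n_2}=1$ and $\gcd(n_2,n_1-n_2)=\gcd(n_1,n_2)=1$ this forces $\zeta=1$, i.e.\ the two points coincide. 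The value $f(0)=a_3$ at the remaining critical point (present when $n_2\geq 2$) differs from every $f(\beta)$ because $\beta^{n_2}\neq 0$. Thus $f$, and likewise $g$, has all distinct critical values.

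With both hypotheses verified for $f$ and $g$, Theorem~\ref{DEM} applies verbatim and yields the claimed equivalence, \eqref{lineartri} being precisely the relation $f(x)=g(\mu(x))$. I expect the only genuine work to be the critical-value computation above; the degree count and the critical-point bookkeeping are routine, and the coprimality condition enters exactly to exclude coincident critical values arising from roots of unity.
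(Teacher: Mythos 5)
Your proposal is correct and follows essentially the same route as the paper: both reduce \eqref{stan} to Theorem~\ref{DEM} by checking that a trinomial $a_1x^{n_1}+a_2x^{n_2}+a_3$ with $\gcd(n_1,n_2)=1$ has at least two distinct critical points (from the factorization of $f'$) and all distinct critical values (the paper deduces $\alpha^{n_2}=\beta^{n_2}$ and $\alpha^{n_1}=\beta^{n_1}$ and invokes $\gcd(n_1,n_2)=1$, while you phrase the same computation via roots of unity and $\gcd(n_1-n_2,n_2)=1$, which is equivalent). Your explicit formula $f(\beta)=a_3+a_2\tfrac{n_1-n_2}{n_1}\beta^{n_2}$ at nonzero critical points is exactly the paper's identity $xf'(x)=n_1(f(x)-a_3)-a_2(n_1-n_2)x^{n_2}$ evaluated at a critical point (the paper's displayed identity has a sign slip that your computation gets right).
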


Corollary~\ref{tri} follows from Theorem~\ref{DEM}. Namely, if $f(x)=a_1x^{n_1}+a_2x^{n_2}+a_3$, then clearly $f'(x)=x^{n_2-1}\left(a_1n_1x^{n_1-n_2}+a_2n_2\right)$, so $f'$ has at least two distinct critical points. Also, $xf'(x)=n_1(f(x)-a_3) +a_2(n_1-n_2)x^{n_2}$. If  $f(\alpha)= f(\beta)$ for distinct critical points $\alpha$ and $\beta$ of $f$, then $\alpha^{n_2}=\beta^{n_2}$. Then from $f'(\alpha)=f'(\beta)=0$ it follows that $\alpha^{n_1}=\beta^{n_1}$. Since $\gcd(n_1, n_2)=1$, we have $\alpha=\beta$. 
It can be shown that if \eqref{lineartri} holds, then either $\mu(0)=0$, or $\deg f=\deg g\leq 3$. (Details can be found in \cite{K15+}, where equations of type \eqref{stan} with one or both trinomials replaced by polynomials with an arbitrary but fixed number of nonconstant terms, are studied.)
Corollary~\ref{tri} generalizes the main result of P\'eter, Pint\'er and Schinzel~\cite[Thm.~1]{PPS11}, who proved it in the case when $K=\Q$ and $\mathcal{O}_S=\Z$. 
They generalized the results of Mignotte and Peth\H{o}~\cite[Thm.~1]{MP99}, of Bugeuad and Luca~\cite[Thm~6.2]{BL06}, and of Luca~\cite[Prop.~3]{L03}.

Polynomials with a fxed number of nonconstant terms, but with the degrees of the terms and the coefficients that may vary, are called lacunary. 
Such polynomials have been studied from various viewpoints.
In \cite{K15+,K15}, equations of type $f(x)=g(y)$, where $f$ and $g$  are arbitrary lacunary polynomials, are studied. One can study such questions via methods presented in this paper. In such investigations, of importance are also results about the behavior of lacunary polynomials with respect to  functional composition. The latter topic has been of interest for a long time, and some remarkable results have been achieved  in the last decade.
For an account of the theory, we direct the reader to~\cite{FMZ15, Z07, Z08}.


Theorem~\ref{DEM} implies the finiteness of integral solutions to the equation $x^{n}+x^{n-1}+\cdots+x+1=y^{m}+y^{m-1}+\cdots+y+1$, with $m> n\geq 3$.
Indeed, let $f(x)=x^{n}+x^{n-1}+\cdots+x+1$. Then $f(x)+(x-1)f'(x)=(n+1)x^{n}$.  
So, $f$ has all distinct critical values unless there exist two distinct critical points $\alpha$ and $\beta$ of $f$ with $\alpha^n=\beta^{n}$. If so, then
 $\alpha^{n-1}+\cdots+\alpha+1=\beta^{n-1}+\cdots+\beta+1$, and hence $(1-\alpha^n)/(1-\alpha)=(1-\beta^n)/(1-\beta)$. Thus, $\alpha=\beta$. The finiteness of integral solutions to this equation was shown by Davenport, Lewis and Schinzel~\cite{DLS61}.
Further corollaries of Theorem~\ref{DEM} are given in Section~\ref{Impl}. In the sequel we explain our methods.



For a field $K$ and $f\in K[x]$ with $f'(x)\neq 0$, the Galois group of $f(x)-t$ over $K(t)$, where $t$ is transcendental over $K$, seen as a permutation group of the roots of this polynomial, is called the \emph{monodromy group} of $f$, and is denoted by $\Mon(f)$. 
\begin{proposition}\label{dtpg}
Let $K$ be a field. If $f\in K[x]$, $\charp(K)\nmid \deg f$, and $f$  has at least two distinct critical points and all distinct critical values, then $\Mon(f)$ is a doubly transitive permutation group.
\end{proposition}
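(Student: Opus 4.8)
The plan is to analyze $\Mon(f)$ through the ramification of the degree-$n$ cover $x \mapsto f(x)$, where $n = \deg f$. Since the hypotheses (at least two distinct critical points, all distinct critical values) are geometric, I would first pass to the geometric monodromy group $A = \Gal(f(X)-t\,/\,\overline{K}(t))$. Because $A$ is a normal subgroup of $\Mon(f)$ acting on the same set of $n$ roots, it suffices to prove that $A$ is doubly transitive. Note that $A$ is transitive to begin with, since $[\overline{K}(x):\overline{K}(t)] = n$ forces $f(X)-t$ to be irreducible over $\overline{K}(t)$.

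Next I would read off the inertia generators. As $\charp(K)\nmid n$, all ramification is tame and $A$ is generated by the inertia elements at the branch points, whose product is trivial. Over $t=\infty$ there is a single, totally ramified place ($x=\infty$, with $e=n$), so its inertia generator is an $n$-cycle; thus $A$ contains a regular cyclic subgroup $\langle\sigma\rangle \cong C_n$. Over a finite branch point $t = f(\beta)$, the hypothesis of \emph{all distinct critical values} is exactly what guarantees that $\beta$ is the only critical point above it, so there is a unique ramified place, of index $e_\beta \ge 2$; its inertia generator is therefore a \emph{single} $e_\beta$-cycle fixing the remaining $n-e_\beta$ points. Since $f$ has at least two distinct critical points and $\sum_\beta (e_\beta-1) = \deg f' = n-1$, each $e_\beta \le n-1$, so each such single cycle fixes at least one point.

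I would then show that $A$ is primitive by proving that $f$ is indecomposable over $\overline{K}$. Suppose $f = g \circ h$ with $\deg g, \deg h > 1$. If some critical point $\gamma$ of $g$ has two distinct $h$-preimages $x_1 \ne x_2$, then $f(x_1) = g(\gamma) = f(x_2)$ are equal critical values at distinct critical points, contradicting the hypothesis; hence $h$ must be totally ramified over every critical point of $g$. A Riemann--Hurwitz count for $h$ (using $\charp(K)\nmid \deg h$) shows that $h$ can be totally ramified over at most one finite point, so $g$ has a single critical point and is of the form $a(x-\gamma)^{\deg g}+b$. But then $f = a(h-\gamma)^{\deg g}+b$ with $h-\gamma$ totally ramified, which makes $f$ of the form $c(x-x_0)^n + b'$, having only one critical point and again contradicting the assumption of two distinct critical points. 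Hence $f$ is indecomposable and $A$ is primitive.

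Finally I would upgrade primitivity to double transitivity using the regular $n$-cycle together with the single cycles. If $n$ is composite, the Burnside--Schur theorem (a cyclic group of composite order is a B-group) immediately gives that the primitive group $A$ containing $\langle\sigma\rangle$ is doubly transitive. If $n$ is prime, Burnside's theorem says $A$ is either doubly transitive or a metacyclic Frobenius subgroup of $\mathrm{AGL}(1,n)$; in the latter case every element with a fixed point is a product of $(n-1)/d$ equal-length cycles for some $d \mid n-1$, so it can be a single cycle only when $d = n-1$, which already forces $A = \mathrm{AGL}(1,n)$ and hence double transitivity. Since our single-cycle inertia generators do have a fixed point, this rules out the proper Frobenius case. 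The main obstacle is precisely this last passage: primitivity together with an $n$-cycle does \emph{not} by itself imply double transitivity (the proper subgroups of $\mathrm{AGL}(1,p)$ are counterexamples), so the distinct-critical-values hypothesis must be exploited a second time, through the existence of the single-cycle generators, to eliminate those exceptional groups.
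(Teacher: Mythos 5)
Your argument is sound in characteristic zero, and there it is essentially the paper's own proof in geometric dress: your primitivity step is the same exploitation of Lemma~\ref{delta} (a critical point of $g$ with two $h$-preimages gives two critical points of $f$ sharing a critical value), and your endgame is the same combination of Schur's theorem (composite degree) and Burnside's theorem (prime degree), with your inertia generators playing the role of Lemma~\ref{mult}. The genuine gap is in positive characteristic, which is precisely what Proposition~\ref{dtpg} adds beyond Proposition~\ref{distroots}. Your claim that $\charp(K)\nmid \deg f$ makes \emph{all} ramification tame is false: it only controls the place above $t=\infty$. A finite critical point $\beta$ can be a root of $f(x)-f(\beta)$ of multiplicity divisible by $\charp(K)$ while all hypotheses hold. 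For instance, over $\mathbb{F}_5$ take $f(x)=x^5(x-1)(x-2)$, of degree $7$: then $f'(x)=2x^5(x+1)$, so $f$ has the two distinct critical points $0$ and $-1$ with distinct critical values $0$ and $4$, yet the place $x=0$ above $t=0$ has ramification index $5=\charp(K)$ and is wildly ramified. At such a place the inertia group has a nontrivial normal $p$-subgroup with cyclic prime-to-$p$ quotient, so it need not be cyclic, need not contain a single $e_\beta$-cycle, and the branch-cycle presentation you invoke (inertia generators with trivial product) does not exist in characteristic $p$. Since those single-cycle elements are exactly what you use to exclude the proper Frobenius subgroups of $\mathrm{AGL}(1,n)$ in prime degree, your last step is unsupported in the wild case. (It can be repaired: inertia orbit lengths still equal the ramification indices, so the inertia group is nontrivial and fixes at least one root, whereas a nonidentity element of $\mathrm{AGL}(1,n)$ fixes at most one point; but that is not the argument you gave.) The paper avoids ramification theory altogether here: it quotes M\"uller's result that Theorem~\ref{Fried1} remains valid assuming only $\charp(K)\nmid\deg f$, and then, as in Remark~\ref{Fried}, rules out $f$ being linearly related to a Dickson polynomial using the critical-value hypotheses.

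A smaller characteristic-$p$ slip occurs in your primitivity step: from $g'(x)=a(x-\gamma)^{\deg g-1}$ you cannot integrate to $g(x)=a'(x-\gamma)^{\deg g}+b$, since in characteristic $p$ antiderivatives are determined only up to polynomials in $x^p$ (in characteristic $3$, $g(x)=x^4+x^3$ has $g'(x)=x^3$). This one is harmless: argue as the paper does and compute $f'=g'(h(x))\,h'(x)=a\,(h(x)-\gamma)^{\deg g-1}h'(x)$ directly; with $h(x)-\gamma=(x-x_0)^{\deg h}$ this yields $f'(x)=c\,(x-x_0)^{\deg f-1}$, contradicting the existence of two distinct critical points. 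Your Riemann--Hurwitz count itself is valid in any characteristic, because different exponents are always at least $e-1$.
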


Polynomials with all simple critical points and all distinct critical values are called Morse. Serre~\cite{S92} showed that  for an arbitrary field $K$ and Morse $f\in K[x]$ such that $\charp(K)\nmid \deg f$, the monodromy group of $f$ is symmetric. The same was previously shown in \cite{H1892} and \cite{BSD59} for the cases $K=\C$, and $K$ a finite field, respectively. 
Turnwald~\cite{T95} showed that in Serre's result the condition on $f$ can be relaxed from requiring that it has all simple critical points to requiring that it has one simple critical point (and all distinct critical values).  In Section~\ref{dioph}, we prove Proposition~\ref{dtpg} and recover these related results .

Proposition~\ref{dtpg} is equivalent to saying that if $f\in K[x]$ with $\charp(K)\nmid \deg f$ has at least two distinct critical points and all distinct critical values, then $(f(x)-f(y))/(x-y)$ is  irreducible  over $K$. In particular, such $f$ cannot be represented as a composition of lower degree polynomials. 
In relation to that,  we mention some recent results of Pakovich~\cite{Pak11}. For complex rational functions $f=f_1/f_2, g=g_1/g_2$, he analysed the irreducibility of the curve $f_1(x)g_2(y)-f_2(x)g_1(y)=0$ (obtained by equating to zero the numerator of $f(x)=g(y)$), and showed several results in the case when $f$ and $g$ have ``few'' common critical values. He further showed that if a complex rational function $f=f_1/f_2$ cannot be represented as a composition of lower degree rational functions, and has at least one simple critical point $x_0$ such that $f(x_1)\neq f(x_0)$ for any other critical point $x_1$ of $f$, then the curve $(f_1(x)f_2(y)-f_2(x)f_1(y))/(x-y)=0$ is irreducible. In Section~\ref{prelim}, we discuss some relations to our results.


The above results are proved in Section~\ref{prelim}, and then used in Section~\ref{dioph} together with the finiteness criterion of Bilu and Tichy \cite{BT00} to prove Theorem~\ref{DEM}. In \cite{K15}, it is shown that two Morse polynomials with rational coefficients, of distinct degrees which are both $\geq 3$, cannot have infinitely many equal values at integer points. This result, generalized by Theorem~\ref{DEM}, does not imply Corollary~\ref{tri}, nor the aforementioned results in \cite{BL06, L03, PPS11}.


\begin{proposition}\label{novo}
Let $K$ be a field with $\charp(K)=0$ and $f\in K[x]$ with at least two distinct critical points and equal critical values at at most two distinct critical points. If $f(x)=g(h(x))$ with $g, h\in K[x]$ and $t=\deg g>1$, then either $k=\deg h\leq 2$, or the derivative of $f$ satisfies either
\begin{equation}\label{poss11}
f'(x)=a'(x-x_0)^{k_0t-1}(x-x_1)^{k_1t-1}(kx -k_0x_1-k_1x_0),
\end{equation}
with $a'\in K$, $k_0, k_1\geq 1$ such that $k_0+k_1=k\geq 3$ and distinct $x_0, x_1\in \overline{K}$, or
\begin{equation}\label{poss21}
f'(x)=a' (x-x_0)^{2t_0+1}(x-x_1)^{t_0}(x-y_0)^{2t_1+1}(x-y_1)^{t_1}, 
\end{equation}
with $a'\in K$, $\deg h=3$, $ t_0, t_1\geq 1$ such that $t_0+t_1=t-1$, and distinct $x_0, x_1, y_0, y_1\in \overline{K}$ that satisfy $3x_0-y_0=2y_1$, $3y_0-x_1=2x_1$.
\end{proposition}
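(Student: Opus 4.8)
The plan is to exploit the factorization of the derivative under composition together with a multiplicity count on $h'$. Writing $f=g\circ h$, the chain rule gives $f'(x)=g'(h(x))\,h'(x)$, so the critical points of $f$ are the roots of $g'(h(x))$ together with the roots of $h'$. The crucial observation is that if $\gamma\in\overline{K}$ is a critical point of $g$ (a root of $g'$), then every root of $h(x)-\gamma$ is a critical point of $f$ with the \emph{same} critical value $g(\gamma)$, since $f'=g'(h)h'$ vanishes there and $f(x)=g(h(x))=g(\gamma)$. Reading the hypothesis as ``no critical value of $f$ is attained at three or more distinct critical points'', and writing $n_\gamma$ for the number of distinct roots of $h(x)-\gamma$, we are forced to have $n_\gamma\leq 2$ for every critical point $\gamma$ of $g$. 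Since $\charp(K)=0$ and $\deg g=t>1$, the derivative $g'$ is nonzero of degree $t-1\geq 1$, so $g$ has at least one critical point.

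Next I would count multiplicities on $h'$. In characteristic zero a root of $h(x)-\gamma$ of multiplicity $m$ is a root of $h'$ of multiplicity $m-1$; summing $m-1$ over the distinct roots lying over a fixed value $\gamma$ shows that these contribute exactly $k-n_\gamma$ to $\deg h'=k-1$. As a point cannot map under $h$ to two different values, the distinct critical points $\gamma_1,\dots,\gamma_r$ of $g$ give disjoint contributions, whence $\sum_{i=1}^r (k-n_i)\leq k-1$, where $n_i=n_{\gamma_i}$. Because each $k-n_i\geq k-2$ (using $n_i\leq 2$ and $k\geq 3$), this yields $r(k-2)\leq k-1$, i.e.\ $r\leq (k-1)/(k-2)$. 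For $k\geq 4$ this forces $r=1$, while for $k=3$ it allows $r\in\{1,2\}$. Thus, once $k=\deg h\geq 3$, the polynomial $g$ has either one or two distinct critical points, and the latter only when $\deg h=3$.

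It then remains to read off the shape of $f'$ in each case. If $r=1$, then $g'$ has the single root $\gamma$ with multiplicity $t-1$, so $g'(h(x))$ is a constant times $(h(x)-\gamma)^{t-1}$, and $h(x)-\gamma$ has $n_1\leq 2$ distinct roots. The value $n_1=1$ would give $h(x)-\gamma=a(x-x_0)^k$ and hence $f'=C(x-x_0)^{kt-1}$, so that $f$ would have a single critical point, contradicting the assumption that $f$ has at least two; therefore $n_1=2$ and $h(x)-\gamma=a(x-x_0)^{k_0}(x-x_1)^{k_1}$ with $k_0+k_1=k\geq 3$. Substituting into $f'=g'(h)h'$ and computing $h'$ explicitly gives \eqref{poss11}, the simple factor $kx-k_0x_1-k_1x_0$ coming from the unique remaining critical point of $h$. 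If $r=2$ (so $k=3$), the inequality is tight, forcing $n_1=n_2=2$ and leaving no critical point of $h$ over a value other than $\gamma_1,\gamma_2$; hence $h(x)-\gamma_i=a(x-u_i)^2(x-v_i)$ for $i=1,2$. Writing $g'$ with roots $\gamma_1,\gamma_2$ of multiplicities $t_0,t_1$ where $t_0+t_1=t-1$ and substituting then yields \eqref{poss21}, and the two linear relations $3x_0-y_0=2y_1$, $3y_0-x_0=2x_1$ arise from equating the second critical point of $h$ as computed from each of the two factorizations of $h'$.

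The main obstacle I anticipate is the bookkeeping that upgrades the numerical inequality $r\leq (k-1)/(k-2)$ to the precise normal forms: in the $r=1$ case one must check that the single ``extra'' simple root of $h'$ survives into $f'$ with exactly the stated linear factor, and in the $r=2$ case that all ramification of $h$ is concentrated over $\gamma_1,\gamma_2$ with no stray critical point. One must also track the interpretation of the hypothesis carefully, since when two critical points of $g$ share a critical value the count $n_1+n_2\leq 2$ becomes strictly more restrictive; in fact this excludes $g(\gamma_1)=g(\gamma_2)$ in the $r=2$ case, so that the four critical points appearing in \eqref{poss21} split into two distinct critical values, consistently with the assumption on $f$.
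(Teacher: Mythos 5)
Your proof is correct, and while it shares the paper's skeleton (the chain-rule observation that every root of $h(x)-\gamma$, for $\gamma$ a critical point of $g$, is a critical point of $f$ with critical value $g(\gamma)$, hence each such fiber has at most two distinct points; and the final computation of $f'=g'(h(x))h'(x)$ to read off \eqref{poss11} and \eqref{poss21}), it handles the combinatorial core by a genuinely different and cleaner device. The paper, after fixing two distinct roots $\gamma_0,\gamma_1$ of $g'$, writes $h(x)=(x-x_0)^{k_0}(x-x_1)^{k_1}+\gamma_0=(x-y_0)^{l_0}(x-y_1)^{l_1}+\gamma_1$, differentiates, and runs a subcase analysis on the multiplicities ($k_1=0$; $k_1=1$; all exponents $>1$) to eliminate cases and force $\deg h=3$ with fiber type $(2,1)$. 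You instead get all of this at once from the ramification count $\sum_{i=1}^{r}(k-n_i)\leq \deg h'=k-1$, which yields $r(k-2)\leq k-1$, hence $r=1$ when $k\geq 4$, and $r\leq 2$ with a tight count (so $n_1=n_2=2$ and no stray critical points of $h$) when $k=3$; this also makes transparent why $g'$ can have at most two distinct roots, a point the paper's argument establishes only implicitly through its case elimination. You still need the factorization comparison, but only in the easy $k=3$ case where $h'$ is quadratic, and equating $(x-x_0)(3x-x_0-2x_1)=(x-y_0)(3x-y_0-2y_1)$ with $x_0\neq y_0$ gives exactly $3x_0=y_0+2y_1$ and $3y_0=x_0+2x_1$; note these agree with the paper's proof, and that the relation ``$3y_0-x_1=2x_1$'' in the proposition's statement is a typo for $3y_0-x_0=2x_1$. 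Two small points you could make explicit: the distinctness of $x_0,x_1,y_0,y_1$ in \eqref{poss21} is automatic because the fibers over $\gamma_1\neq\gamma_2$ are disjoint sets, and in \eqref{poss11} the requirement $k_0,k_1\geq 1$ is exactly your observation that $n_1=1$ would leave $f$ with a single critical point.
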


Some well-known families of polynomials $(f_n)_n$ satisfy that for all $n$, $f_n$ has at least two distinct critical points and equal critical values at at most two distinct critical points. Stoll~\cite{T03} observed that this is the case for the families of polynomials $(f_n)_n$ with real coefficients  that satisfy a differential equation $\sigma(x)f_n''(x)+\tau f_n'(x)-\lambda_n f_n(x)=0$,  $n\geq 0$ 
for some $\sigma, \tau\in \R[x]$ with $\deg \sigma \leq 2$, $\deg \tau\leq 1$, $\lambda_n\in \R \setminus \{0\}$ and nonvanishing  $\sigma'-2\tau$. Classical orthogonal polynomials such as Hermite, Laguerre, Jacobi, Gegenbauer and Bessel polynomials satisfy such a differential equation. In \cite{BST99}, it is shown  that $x(x+1)\cdots(x+n-1)$ for $n\geq 3$ has at least two distinct critical points and equal critical values at at most two distinct critical points. There are many results in the literature on Diophantine equations of type $f(x)=g(y)$ with $f(x)=x(x+1)\cdots(x+n-1)$, see e.g.\@  \cite{BST99, BBKPT02, ES74}. For instance, by the celebrated theorem of Erd\H{o}s and Selfridge, the equation $x(x+1)\cdots(x+n-1)=y^n$ for $m, n\geq 2$ has no solutions in positive integers $x, y$. 
Further families of polynomials with this property can be found in Section~\ref{Impl}.

By Proposition~\ref{novo} it follows that if $K$ is a field with $\charp(K)=0$,
 $f\in K[x]$ has at least three simple critical points,  equal critical values at at most two distinct critical points, and $f(x)=g(h(x))$ with $g, h\in K[x]$ and $\deg g>1$, then $\deg h\leq 2$.  It is easy to see (see Lemma~\ref{delta} and the text below) that this holds if $f$ has only simple critical points and equal critical values at at most two distinct critical points. This fact was used in \cite{BST99, DK15, DT01, T03} in the study of Diophantine equations of type $f(x)=g(y)$ via Bilu and Tichy's theorem,  to find the possible decompositions of $f$ and $g$. The proofs in those papers are completed by a lengthy analyzis of subcases implied by the criterion, and rely on particular properties of $f$ and $g$.  
Results of these papers are, to the most part, generalized by the following theorem. 

\begin{theorem}\label{DEM2}
Let $K$ be a number field, $S$ a finite set of places of $K$ that contains all Archimedean places, $\mathcal{O}_S$  the ring of $S$-integers of $K$ and $f,g\in K[x]$ with $\deg f\geq 3$, $\deg g\geq 3$ and $\deg f< \deg g$.

If $f$ and $g$ both have at least two distinct critical points and equal critical values at at most two distinct critical points, and do not satisfy  \eqref{poss11} nor \eqref{poss21}, then the equation $f(x)=g(y)$ has finitely many solutions with a bounded $\mathcal{O}_S$-denominator unless either $(\deg f, \deg g)\in \{(3, 4), (3, 5), (4, 5), (4, 6)\}$, or $f$ is indecomposable and $g(x)=f(\nu(x))$ for some quadratic $\nu\in K[x]$.

In particular, if $f$ and $g$ have at least three simple critical points and equal critical values at at most two distinct critical points, then the equation $f(x)=g(y)$ has finitely many solutions with a bounded $\mathcal{O}_S$-denominator, unless  $(\deg f, \deg g)=(4, 5)$, or $f$ is indecomposable and $g(x)=f(\nu(x))$ for some quadratic $\nu\in K[x]$.


\end{theorem}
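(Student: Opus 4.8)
The plan is to combine the finiteness criterion of Bilu and Tichy with Propositions~\ref{dtpg} and \ref{novo}. Assume $f(x)=g(y)$ has infinitely many solutions with bounded $\mathcal{O}_S$-denominator. By the Bilu--Tichy theorem there are linear $\lambda,\mu\in K[x]$, a polynomial $\phi\in K[x]$, and a standard pair $(f_1,g_1)$ over $K$ with $f=\phi\circ f_1\circ\lambda$ and $g=\phi\circ g_1\circ\mu$, such that $f_1(x)=g_1(y)$ has infinitely many such solutions. Since a linear change of variable alters neither the number of distinct critical points nor the partition of the critical points into fibres of equal critical value, and does not affect whether \eqref{poss11} or \eqref{poss21} holds, I would work with $f=\phi\circ\tilde f$ and $g=\phi\circ\tilde g$, where $\tilde f=f_1\circ\lambda$ and $\tilde g=g_1\circ\mu$ inherit all relevant properties of $f_1$ and $g_1$.

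The first step is to bound $\deg\phi$. If $\deg\phi>1$, then applying Proposition~\ref{novo} to $f=\phi\circ\tilde f$ and to $g=\phi\circ\tilde g$, and using that neither $f$ nor $g$ satisfies \eqref{poss11} or \eqref{poss21}, forces $\deg\tilde f\le 2$ and $\deg\tilde g\le 2$. Together with $\deg f<\deg g$ this yields $\deg\tilde f=1$ and $\deg\tilde g=2$, so $f$ is linearly equivalent to $\phi$ and $g=f\circ\nu$ for some quadratic $\nu\in K[x]$. I would then rule out that $f$ is decomposable: if $f=A\circ B$ with $\deg A,\deg B\ge 2$, then Proposition~\ref{novo} applied to this decomposition (as $f$ is not special) forces $\deg B=2$, so that $g=A\circ(B\circ\nu)$ exhibits $g$ as $A$ composed with a degree-$4$ polynomial; a second application of Proposition~\ref{novo} to $g$ then contradicts that $g$ is not special. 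Hence $f$ is indecomposable, landing in the stated exception $g(x)=f(\nu(x))$.

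It remains to treat $\deg\phi=1$, where $f$ and $g$ are linearly equivalent to the two components of a standard pair, both therefore of degree $\ge 3$ and both having at least two distinct critical points with no critical value attained at more than two of them. Here I would run through the five kinds. Pairs of the first and second kind contain a component $x^m$ (respectively $x^2$), which has a single critical point or degree $<3$, and are excluded. For the third and fourth kinds I would compute, via $D_n(2\sqrt a\cos\theta,a)=2a^{n/2}\cos n\theta$, that the $n-1$ distinct critical points of a Dickson polynomial $D_n$ ($n\ge 3$) take the two critical values $\pm 2a^{n/2}$ in an alternating pattern; hence some critical value is attained at three or more critical points precisely when $n\ge 6$, so the hypothesis forces both Dickson indices to be at most $5$. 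With the coprimality and degree constraints this leaves only the index pairs $(3,4),(3,5),(4,5)$ for the third kind and nothing for the fourth. A direct check shows that both components $(ax^2-1)^3$ and $3x^4-4x^3$ of the fifth-kind pair satisfy the hypothesis, contributing $(4,6)$. Collecting the survivors gives exactly $(\deg f,\deg g)\in\{(3,4),(3,5),(4,5),(4,6)\}$.

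For the ``in particular'' statement I would note that three simple critical points automatically exclude \eqref{poss11} and \eqref{poss21}, as recorded before the theorem, so the first part applies, and then discard the degree pairs incompatible with three simple critical points: a cubic has at most two critical points, removing $(3,4)$ and $(3,5)$; each component of the fifth-kind pair has a critical point of multiplicity $>1$, removing $(4,6)$; while $D_4$ and $D_5$ have three and four simple critical points respectively, so $(4,5)$ survives. The main obstacle is the critical-point/critical-value analysis of the standard pairs, above all the Dickson computation pinning down the cutoff $n\le 5$ and the verification for the fifth kind, the remainder being a fairly mechanical application of the two propositions and the Bilu--Tichy criterion.
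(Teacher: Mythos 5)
Your overall strategy coincides with the paper's own proof (Theorem~\ref{DEM22}, from which Theorem~\ref{DEM2} and Corollary~\ref{DEMc} are deduced): apply the Bilu--Tichy criterion; when $\deg\phi>1$ use Proposition~\ref{novo} to force $(\deg f_1,\deg g_1)=(1,2)$, giving $g=f(\nu)$ with $\nu$ quadratic, and then a second application of Proposition~\ref{novo} to show $f$ (equivalently $\phi$) is indecomposable; when $\deg\phi=1$ run through the kinds of standard pairs, using the fact that $D_n(x,a)$ with $n\geq 6$ takes some critical value at three distinct critical points (the paper's Corollary~\ref{critdick}, which you rederive via $D_n(2\sqrt{a}\cos\theta,a)=2a^{n/2}\cos n\theta$); and obtain the ``in particular'' clause by observing that three simple critical points exclude \eqref{poss11} and \eqref{poss21} and are incompatible with the degree pairs $(3,4)$, $(3,5)$, $(4,6)$. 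All of these steps are correct and match the paper.

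There is, however, one genuine gap: you state the Bilu--Tichy theorem as producing a \emph{standard} pair, but Theorem~\ref{T:BT} produces a standard \emph{or specific} pair, and your case analysis in the $\deg\phi=1$ case never treats specific pairs. A specific pair is
$\left(D_k\left(x,a^{k/d}\right),-D_l\left(x\cos(\pi/d),a^{l/d}\right)\right)$ (or switched) with $d=\gcd(k,l)\geq 3$, so it is not covered by your third/fourth-kind discussion (where $\gcd(k,l)\leq 2$). Without excluding it, the list of surviving degree pairs is not established. The fix is immediate from your own Dickson computation: since $k<l$ and both are multiples of $d\geq 3$, necessarily $l\geq 2d\geq 6$, so the degree-$l$ component has a critical value attained at three distinct critical points, contradicting the hypothesis on $g$; this is exactly how the paper disposes of specific pairs. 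With that case added, your proof is complete and is essentially the paper's argument.
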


Theorem~\ref{DEM2} is proved in Section~\ref{dioph}. In relation to Theorem~\ref{DEM2}, we further list all pairs of polynomials $(f, g)$ with $(\deg f, \deg g)\in \{(3, 4), (3, 5), (4, 5), (4, 6)\}$, with at least two distinct critical points and  equal critical values at at most two distinct critical points, for which the equation $f(x)=g(y)$ has infinitely many solutions with a bounded $\mathcal{O}_S$-denominator. See Theorem~\ref{DEM22}.
The case $g(x)=f(\nu(x))$ with indecomposable $f$ and $\deg \nu=2$  in Theorem~\ref{DEM2}, can be examined by comparison of coefficients of the involved polynomials. It is usually simple to check if this holds.
A different way to handle this special case can be found in Section~\ref{dioph}. This approach relies on Ritt's~\cite{R22} and Engstrom's~\cite{E41} results about the essential uniqueness of \emph{prime} factorization of polynomials over fields of characteristic zero with respect to composition.
In Section~\ref{dioph}, we further address the case $\deg f=\deg g$ of Theorem~\ref{DEM2} .
In Section~\ref{Impl}, we discuss applications of this theorem.

Theorem~\ref{DEM} and Theorem~\ref{DEM2} are ineffective since they rely on the main result of \cite{BT00}, which is ineffective.




\section{Finiteness Criterion}\label{crit}

In this section we present the finiteness criterion of Bilu and Tichy~\cite{BT00}. 

 Let $K$ be a  field, $a, b\in K \setminus\{0\}$, $m, n\in \N$, $r\in \N\cup \{0\}$, $p \in K[x]$ be a nonzero polynomial (which may be constant) and $D_{n} (x,a)$ be the $n$-th Dickson polynomial with parameter $a$ given by 
\begin{equation}\label{expdick}
D_n(x,a)=\sum_{j=0}^{\lfloor n/2 \rfloor} \frac{n}{n-j} {n-j \choose j} (-a)^{j} x^{n-2j}.
\end{equation}

{\it Standard} pairs of polynomials over $K$ are listed in the following table.

\vspace{0.2cm}
\begin{center}
\scalebox{0.8}{
 \begin{tabular}{|l|l|l|}
                \hline
                kind & standard pair (or switched) & parameter restrictions \\
                \hline
                first & $(x^m, a x^rp(x)^m)$ & $r<m, \gcd(r, m)=1,\  r+ \deg p > 0$\\
                second & $(x^2,\left(a x^2+b)p(x)^2\right)$ & - \\
                third & $\left(D_m(x, a^n), D_n(x, a^m)\right)$ & $\gcd(m, n)=1$\\
                fourth & $(a ^{\frac{-m}{2}}D_m(x, a), -b^{\frac{-n}{2}}D_n (x,b))$ & $\gcd(m, n)=2$\\
                fifth & $\left((ax^2 -1)^3, 3x^4-4x^3\right)$ & - \\
                \hline
        \end{tabular}}
\end{center}
\vspace{0.2cm}
We further call the pair
\[
\left (D_m \left (x, a^{m/d}\right), - D_n \left (x \cos (\pi/d), a^{n/d}\right)\right) \ \textnormal{(or switched)},
\]
with $d=\gcd(m, n)\geq 3$ and $\cos(2\pi/d)\in K$, a {\it specific pair} over $K$. If $b, \cos (2\alpha)\in K$, then $D_n(x\cos \alpha, b)\in K[x]$. (This follows from $b^nD_n(x, a)=D_n(bx, b^2a)$ which holds for any $a, b$, see  \cite[Sec.~3]{BT00}.) Thus, a specific pair over $K$ is indeed a pair of polynomials with coefficients in $K$.


\begin{theorem}\label{T:BT}
Let $K$ be a number field, $S$ a finite set of places of $K$ that contains all Archimedean places, $\mathcal{O}_S$  the ring of $S$-integers of $K$, and $f, g\in K[x]$ nonconstant.
Then the following assertions are equivalent.
\begin{itemize}
\item[-] The equation $f(x)=g(y)$ has infinitely many solutions with a bounded $\mathcal{O}_S$-denominator;
\item[-] We have 
\begin{equation}\label{BTeq}
f(x)=\phi\left(f_{1}\left(\lambda(x)\right)\right)\quad \& \quad g(x)=\phi\left(g_{1}\left(\mu(x)\right)\right),
\end{equation}
where $\phi\in K[x]$, $\lambda, \mu\in K[x]$ are linear polynomials,
and $\left(f_{1},g_{1}\right)$ is a
standard or specific pair over $K$ such that the equation $f_1(x)=g_1(y)$
has infinitely many solutions with a bounded $\mathcal{O}_S$-denominator.
\end{itemize}

\end{theorem}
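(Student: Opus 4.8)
The plan is to prove the two implications of the equivalence separately; the reverse implication is essentially bookkeeping, while the forward implication carries all the weight and is the content of \cite{BT00}.

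For the easy direction, suppose \eqref{BTeq} holds with $(f_1,g_1)$ a standard or specific pair for which $f_1(x)=g_1(y)$ has infinitely many solutions with bounded $\mathcal{O}_S$-denominator. Given such a solution $(u,v)$, I would set $x=\lambda^{-1}(u)$ and $y=\mu^{-1}(v)$. Since $\lambda,\mu$ are linear polynomials over $K$, the maps $\lambda^{-1},\mu^{-1}$ alter denominators by a fixed element of $K$, so after enlarging $S$ (equivalently, after rescaling the common denominator $\delta$) the bounded-denominator property is preserved. Then $f(x)=\phi(f_1(\lambda(\lambda^{-1}(u))))=\phi(f_1(u))=\phi(g_1(v))=g(y)$, and since $\lambda,\mu$ are bijective, distinct $(u,v)$ yield distinct $(x,y)$; hence $f(x)=g(y)$ inherits infinitely many bounded-denominator solutions.

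For the forward direction, assume $f(x)=g(y)$ has infinitely many solutions with bounded $\mathcal{O}_S$-denominator, and regard these as quasi-$S$-integral points on the affine plane curve $C\colon f(x)-g(y)=0$. After passing to an absolutely irreducible component $C_0$ that carries infinitely many of these points (and is defined over a number field), Siegel's theorem \cite{S29}, in the $S$-integral/bounded-denominator formulation recalled in the introduction, forces $C_0$ to have genus $0$ and at most two points at infinity. The task is to convert this geometric constraint into the algebraic normal form \eqref{BTeq}. Here I would use the monodromy description of the two covers $x\mapsto f(x)$ and $y\mapsto g(y)$ of $\mathbb{P}^1$ through the parameter $t$: the components of $C$ correspond to the orbits of the Galois group of the compositum of the splitting fields of $f(X)-t$ and $g(Y)-t$ over $K(t)$ acting on pairs of roots, i.e.\ $C$ is the fibre product of these covers. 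The genus and number of points at infinity of $C_0$ are then computed by the Riemann--Hurwitz formula, the ramification being governed by the cycle types of the branch cycles of $\Mon(f)$ and $\Mon(g)$ over the branch locus (the critical values together with $t=\infty$), which is exactly the monodromy viewpoint underlying Proposition~\ref{dtpg}.

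The remaining and decisive step is the classification. Following Fried's method, I would first strip off the largest common left composition factor, writing $f=\phi\circ\tilde f$ and $g=\phi\circ\tilde g$ by Ritt's decomposition theory, and reduce to the situation in which $\tilde f(x)-\tilde g(y)$ has a genus-$0$ component with at most two points at infinity but no further common left factor. One then shows that this genus-zero constraint, together with the Riemann--Hurwitz ramification bounds and the classification of the primitive (in particular doubly transitive) permutation groups of genus zero that can arise as $\Mon(\tilde f)$ and $\Mon(\tilde g)$, forces $(\tilde f,\tilde g)$, up to the linear substitutions $\lambda,\mu$, onto the explicit list: the monomial pairs, the $x^2$-pairs, the Dickson pairs of third and fourth kind, the specific Dickson pairs, and the exceptional fifth pair. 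I expect this final classification, rather than Siegel's theorem or the reverse implication, to be the main obstacle: matching each admissible genus-zero ramification pattern to exactly one row of the standard-pair table (or to a specific pair), and ruling out all others, is a delicate case analysis of genus-zero group data and is precisely where the full strength of \cite{BT00} is required.
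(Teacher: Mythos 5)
The paper does not prove this statement at all: Theorem~\ref{T:BT} is the finiteness criterion of Bilu and Tichy, imported verbatim from \cite{BT00} and used as a black box (Section~\ref{crit} opens by saying the criterion is being ``presented,'' not proved). So there is no internal proof to compare yours against; the only benchmark is \cite{BT00} itself.

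Measured against that, your reverse implication is fine --- the denominator bookkeeping under $\lambda^{-1},\mu^{-1}$ is routine, and your parenthetical ``rescaling $\delta$'' is the correct fix, since the statement fixes $S$ and only allows changing the denominator $\delta$. But the forward implication, which is the entire content of the theorem, is not proved: you outline Siegel's theorem, the fibre-product/monodromy description of the components of $f(x)-g(y)=0$, and a Ritt/Fried-style reduction, and then state that the final classification ``is precisely where the full strength of \cite{BT00} is required.'' That is a citation, not a proof; the genuine gap sits exactly at the step your own outline identifies as decisive, namely showing that the genus-zero, at-most-two-points-at-infinity constraint forces the reduced pair onto the explicit list of standard and specific pairs. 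Two further inaccuracies in the outline are worth flagging: the reduction in \cite{BT00} is not simply ``strip off the largest common left composition factor'' (Fried's reduction theorems are formulated via the splitting fields of $f(X)-t$ and $g(Y)-t$, and reducibility of $f(x)-g(y)$ is not governed by naive common left factors); and the classification there is carried out by explicit ramification and genus computations for the irreducible kernel pairs, not by appeal to a classification of genus-zero primitive permutation groups. None of this affects the paper, which, like you, ultimately takes Theorem~\ref{T:BT} on faith from \cite{BT00}; but as a standalone proof attempt, yours establishes only the easy direction.
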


We remark that in \cite{BFLP13}, in relation to Theorem~\ref{T:BT}, the authors asked and answered the following question:
\emph{Given $f, g\in \Q[x]$ and $\delta\in \Z$, is it true that all but finitely many rational solutions to $f(x) = g(y)$ with  denominator $\delta$ also satisfy the equation $f_1(\lambda(x))=g_1(\mu(y))$}? Unfortunately, this is not true in general, and some counterexamples are not hard to find. 
 In \cite[Thm.4]{BFLP13}, the authors found all counterexamples to this statement.


\subsection{Dickson polynomials}


For various properties of Dickson polynomials see \cite[Sec.~3]{BT00}. We now list some, which will be of importance in the sequel in relation to Theorem~\ref{T:BT}. Here, $K$ is any field of characteristic zero.
For $n\geq 2$, $n$-th primitive root of unity $\zeta_n\in \overline{K}$, $\alpha_k=\zeta_n^k+\zeta_n^{-k}$ and $\beta_k=\zeta_n^k-\zeta_n^{-k}$, we have:
\begin{align}\label{factor}
\begin{split}
D_n(x, a)-D_n(y,a)&=(x-y) \prod_{k=1}^{(n-1)/2} (x^2-\alpha_kxy +y^2+\beta_k^2a)\ \textnormal{when $n$ is odd},\\
D_n(x, a)-D_n(y,a)&=(x-y)(x+y) \prod_{k=1}^{(n-2)/2} (x^2-\alpha_kxy +y^2+\beta_k^2a)\ \textnormal{when $n$ is even}.
\end{split}
\end{align}
Dickson polynomials further satisfy the following differential equation
\begin{equation}\label{dickdiff}
(x^2-4\alpha)D_n''(x, a)+xD_n'(x, a)-n^2D_n(x, a)=0, \quad n\geq 0.
\end{equation}

By letting $f(x):=D_n(x, a)^2-(x^2-4a)/n^2D_n'(x, a)^2$, from \eqref{dickdiff} it follows that $f'(x)=0$, so $f$ is constant. This implies that $D_n(x, a)$ has at most two distinct critical values.  In fact, it is well known that if $D_n'(x_0, a)=0$, then $D_n(x_0, a)\in \{\pm 2a^{n/2}\}$, see \cite[Sec.~3]{BT00}. It follows that Dickson polynomial $D_{n}(x,a)$ with $a\neq 0$ has only simple critical points. We have the following corollary.

\begin{corollary}\label{critdick}
Let $K$ be a field with $\charp(K)=0$.  If $n\geq 4$ and $a\neq 0$, there exist two distinct critical points of $D_n(x, a)$ with equal critical values. If $n\geq 6$ and $a\neq 0$, there exist three distinct critical points of $D_n(x, a)$ with equal critical values. 
\end{corollary}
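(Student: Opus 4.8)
The plan is to deduce both statements from a counting (pigeonhole) argument built on the two facts just recorded above: that $D_n(x,a)$ with $a\neq 0$ has only simple critical points, and that each of its critical values lies in the two-element set $\{2a^{n/2},-2a^{n/2}\}$.

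First I would determine the exact number of critical points. Since $\deg D_n(x,a)=n$, the derivative $D_n'(x,a)$ has degree $n-1$; and since all critical points are simple, $D_n'(x,a)$ has $n-1$ distinct roots in $\overline{K}$. Hence $D_n(x,a)$ has exactly $n-1$ distinct critical points. Next I would note that, by the discussion preceding the corollary, the critical value at each of these points equals $2a^{n/2}$ or $-2a^{n/2}$, so the critical values take at most two distinct values (in fact exactly two, since $a\neq 0$ and $\charp(K)=0$ give $2a^{n/2}\neq -2a^{n/2}$).

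Finally I would invoke the pigeonhole principle. Assigning to each of the $n-1$ critical points its critical value partitions them into at most two classes, so some class contains at least $\lceil (n-1)/2\rceil$ of the critical points, all sharing one common critical value. For $n\geq 4$ we have $n-1\geq 3$, whence $\lceil (n-1)/2\rceil\geq 2$, producing two distinct critical points with equal critical values; for $n\geq 6$ we have $n-1\geq 5$, whence $\lceil (n-1)/2\rceil\geq 3$, producing three. This establishes both claims.

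The argument is routine once the two cited facts are available, so there is no serious obstacle; the only points requiring care are that the number of distinct critical points is \emph{exactly} $n-1$ (which uses simplicity of the critical points, and hence the consequence of the differential equation \eqref{dickdiff} that no critical point is a repeated root of $D_n'$) and that the critical values really do range over a set of at most two elements (which again uses \eqref{dickdiff} together with $a\neq 0$).
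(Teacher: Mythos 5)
Your proof is correct and follows essentially the same route as the paper: the corollary is stated there as an immediate consequence of the two facts established just before it (that $D_n(x,a)$ with $a\neq 0$ has only simple critical points, hence exactly $n-1$ distinct ones, and that every critical value lies in $\{\pm 2a^{n/2}\}$), with the pigeonhole count left implicit. Your write-up simply makes that counting explicit; the parenthetical claim that \emph{exactly} two critical values occur is unnecessary (and would need a further argument that both values are attained), but it is not used, so nothing is affected.
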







\section{Polynomial decomposition via Galois theory}\label{prelim}

Throughout this section $K$ is an arbitrary field with $\charp(K)=0$. 

A polynomial $f\in K[x]$ with $\deg f>1$ is called \emph{indecomposable} (over $K$)
if it cannot be written as the composition $f(x)=g(h(x))$ with $g,h\in K[x]$, $\deg g>1$ and $\deg h>1$. Otherwise, $f$ is said to be \emph{decomposable}. Any representation of $f$ as a functional composition of  polynomials of degree $>1$ is said to be a \emph{decomposition} of $f$. If $\mu \in K[x]$ is of degree $1$, then there exists $\mu^{\langle-1\rangle}\in K[x]$ such that $\mu \circ \mu^{\langle-1\rangle}(x)=\mu^{\langle-1\rangle}\circ \mu(x)=x$. (By comparison of degrees one sees that no such polynomial exists when $\deg \mu>1$). Then $\mu^{\langle-1\rangle}$ is said to be the inverse of $\mu$ with respect to functional composition. This explains the assumption $\deg g>1, \deg h>1$ in the definition of indecomposable polynomials.

Note that for decomposable $f\in K[x]$ we may write without loss of generality
\begin{align}\label{wlog2}
\begin{split}
& f(x)=g(h(x))\  \textnormal{with}\  g, h\in K[x], \ \deg g\geq 2, \deg h \geq 2,\\
& h(x) \ \textnormal{monic and} \ h(0)=0. 
\end{split}
\end{align}
Namely, if $f=g\circ h$ with $g, h\in K[x]\setminus K$, then there exists linear $\mu\in K[x]$ such that $\mu\circ h$ is monic and $\mu(h(0))=0$. Clearly $f=\left(g\circ \mu^{\langle-1\rangle}\right)\circ \left(\mu\circ h\right)$.

\begin{proposition}\label{overline}
Let $K$ be a field with $\charp(K)=0$. Then $f$ is indecomposable over $K$ if and only if it is indecomposable over $\overline{K}$.
\end{proposition}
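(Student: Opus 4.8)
The plan is to prove both implications through the Galois-theoretic dictionary between polynomial decompositions and block systems of the monodromy group, handling the two directions asymmetrically. One implication is immediate: any decomposition $f=g\circ h$ with $g,h\in K[x]$ of degree $>1$ is \emph{a fortiori} a decomposition over $\overline K$, so indecomposability over $\overline K$ forces indecomposability over $K$. It therefore remains to show that if $f$ is decomposable over $\overline K$ then it is already decomposable over $K$. I set $n=\deg f$ and let $t$ be transcendental over $\overline K$. Since $f(X)-t$ is linear, hence primitive, in $t$, it is irreducible over $K(t)$ and over $\overline K(t)$; writing $\Omega$ and $\bar\Omega$ for the splitting fields of $f(X)-t$ over $K(t)$ and $\overline K(t)$, the arithmetic monodromy group $G=\Gal(\Omega/K(t))$ and the geometric monodromy group $\bar G=\Gal(\bar\Omega/\overline K(t))$ both act transitively on the same set $R=\{x_1,\dots,x_n\}$ of roots. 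I will invoke the standard correspondence (Fried--MacRae; available here because $\charp(K)=0$ makes $f(X)-t$ separable): decompositions of $f$ over a field $L\in\{K,\overline K\}$ correspond to the intermediate fields of $L(x)/L(f)$, equivalently to the block systems of the associated monodromy group on $R$. Thus $f$ is indecomposable over $L$ if and only if that monodromy group is primitive, and the claim reduces to showing that $G$ and $\bar G$ have exactly the same block systems on $R$.

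Two structural facts drive the argument. First, $\bar G\trianglelefteq G$: if $k=\overline K\cap\Omega$ denotes the (finite) field of constants, then restriction identifies $\bar G$ with $\Gal(\Omega/k(t))$, the kernel of the natural map $G\to\Gal(k/K)$, which is normal. Second, $\bar G$ contains an $n$-cycle. Indeed, the place $t=\infty$ of $\overline K(t)$ is totally ramified in $\overline K(x)/\overline K(f)$, with ramification index $n$; because $\charp(K)=0$ this ramification is tame, so the inertia group at a place of $\bar\Omega$ above $t=\infty$ is cyclic of order $n$ and acts on $R$ as a single $n$-cycle $\sigma$. As inertia is geometric, $\sigma\in\bar G\subseteq G$.

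The combinatorial heart is that the presence of the $n$-cycle rigidifies the block structure. Labelling $R$ so that $\sigma=(1\,2\,\cdots\,n)$, the cyclic group $\langle\sigma\rangle$ acts regularly, so its block systems are exactly the coset partitions of its subgroups: for each divisor $d\mid n$ there is precisely one block system with blocks of size $d$, namely the residue classes modulo $n/d$. Since $\langle\sigma\rangle$ lies in both $\bar G$ and $G$, every block system of either group is one of these finitely many ``arithmetic'' systems, and in particular there is at most one block system of each block size. Now let $\mathcal B$ be a block system of $\bar G$, of block size $d$, and let $\rho\in G$ be arbitrary. Using $\bar G\trianglelefteq G$, for $g\in\bar G$ one has $g(\rho B)=\rho\big((\rho^{-1}g\rho)B\big)$, and $\rho^{-1}g\rho\in\bar G$ permutes the blocks of $\mathcal B$; hence $\rho\mathcal B$ is again a block system of $\bar G$, still of block size $d$. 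By the uniqueness just noted, $\rho\mathcal B=\mathcal B$. As $\rho\in G$ was arbitrary, $\mathcal B$ is $G$-invariant, i.e.\ a block system of $G$. Combined with the trivial inclusion (a block system of the larger group $G$ is automatically one of the subgroup $\bar G$), this shows $G$ and $\bar G$ have identical block systems on $R$; in particular one is primitive if and only if the other is, which is exactly the desired equivalence.

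I expect the main obstacle to be marshalling the Galois-theoretic dictionary over the non-closed field $K$ rather than any single computation: one must be sure that decompositions over $K$ (and over $\overline K$) are faithfully recorded by intermediate fields and block systems, which rests on L\"uroth's theorem together with the total ramification of the place at infinity (forcing the L\"uroth generator of an intermediate field to be, up to a linear change, a polynomial). The characteristic-zero hypothesis is used in two essential places: it guarantees separability, so that the Galois correspondence applies, and it makes the ramification at infinity tame, which is what produces the full $n$-cycle in $\bar G$; in positive characteristic dividing the relevant ramification data this $n$-cycle can be lost and the statement may fail.
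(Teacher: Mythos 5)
Your proof is correct, but it is a genuinely different argument from the one in the paper. The paper follows Fried--MacRae: given a decomposition $f=g\circ h$ over $\overline{K}$, normalize it as in \eqref{wlog2} ($h$ monic, $h(0)=0$), and then a comparison of coefficients --- the top coefficients of $f$ determine those of $h$ recursively, and then $h$ determines $g$ by division --- shows that $g,h$ already lie in $K[x]$; the whole proof is two lines of elementary algebra. You instead run the Galois-theoretic dictionary on both sides: indecomposability over $L\in\{K,\overline{K}\}$ is primitivity of the corresponding monodromy group (the paper's Lemma~\ref{prim}, valid over any characteristic-zero field, so there is no circularity), the geometric group $\bar G$ is normal in the arithmetic group $G$ via the constant-field extension, tame total ramification at infinity puts an $n$-cycle $\sigma$ in $\bar G$, and since all invariant partitions for the regular group $\langle\sigma\rangle$ are the coset partitions of its subgroups (hence unique per block size), normality forces every block system of $\bar G$ to be $G$-invariant. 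Each step you cite (natural irrationalities, $\Omega\cap\overline{K}(t)=k(t)$, the branch-cycle description of inertia at infinity, blocks of a regular cyclic group) is standard and correctly deployed. What your route buys is a strictly stronger conclusion --- $G$ and $\bar G$ have \emph{identical} block systems, so the entire lattice of decompositions, not just indecomposability, is insensitive to the base change $K\mapsto\overline{K}$ --- and it isolates exactly where characteristic zero enters (separability and tameness). What the paper's route buys is brevity and effectivity: the normalized $\overline{K}$-decomposition is shown to literally have coefficients in $K$, with no function-field machinery at all.
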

Proposition~\ref{overline} is  due to Fried and McRae~\cite{FM69}. To see that it holds, let $f\in K[x]$ and $g, h\in \overline{K}$ with $\deg \geq 2, \deg h\geq 2$, $h$ monic and $h(0)=0$  be such that $f=g\circ h$,  as in \eqref{wlog2}. Comparison of coefficients yields  $g, h\in K[x]$.

We now recall the definition of the monodromy group given in the introduction.

\begin{definition}\label{mon}
Given $f\in K[X]$ with $\charp(K)=0$ and $\deg f>1$, the \emph{monodromy group} $\Mon(f)$ of $f$ is the Galois group of $f(X)-t$ over the field $K(t)$, where $t$ is transcendental over $K$, viewed as a group of permutations of the roots of $f(X)-t$.
\end{definition}

By Gauss's lemma it follows that $f(X)-t$ from Definition~\ref{mon} is irreducible over $K(t)$, so $\Mon(f)$ is a transitive permutation group. Since $\charp(K)=0$, $f(X)-t$ is also separable. Let $x$ be a root of $f(X)-t$ in its splitting field $L$ over $K(t)$. Then $t=f(x)$ and $\Mon(f)=\Gal(L/K(f(x)))$ is viewed as a permutation group on the conjugates of $x$ over $K(f(x))$.  

L\"uroth's theorem (see \cite[p.\@~13]{S00}) states that for fields $K, L$ satisfying $K\subset L\subseteq K(x)$ we have $L=K(f(x))$ for some $f\in K(x)$. This theorem provides a dictionary between decompositions of $f\in K[x]$ and fields between $K(f(x))$ and $K(x)$. These fields correspond to groups between the two associated Galois groups - $\Gal(L/K(f(x)))=\Mon(f)$ and $\Gal(L/K(x))$  (stabilizer of $x$ in $\Mon(f)$). 
Find more about the Galois theoretic setup for addressing decomposition questions in \cite{KZ14} and \cite{ZM}. 

In \cite{ZM}, Ritt's~\cite{R22} Galois theoretic approach to decomposition questions is presented in a modernized and simplified language, and various new results are proved. In \cite{KZ14}, the authors adopted this modernized language and  examined the different ways of writing a cover of curves over a field $K$ as a composition of covers of curves over $K$ of degree at least $2$ which cannot be written as the composition of two lower-degree covers. By the generalization to the framework of covers of curves, which provides a valuable perspective even when one is only interested in questions about polynomials, several improvements on previous work were made possible. 

\subsection{The monodromy group}\label{Facts}
We now list some well-known properties of the monodromy group that will be used in the sequel, sometimes without particular reference. Here, $K$ is any field of characteristic zero.

A transitive permutation group $G$ acting on a set $X$ is called primitive if it preserves no nontrivial partition of $X$ (trivial partitions are those consisting either  of one set of size $\#X$ or of $\# X$ singletons). A permutation group $G$ acting on a set $X$ with $\# X\geq 2$ is called doubly transitive when, for any two ordered pairs of distinct elements $(x_1, y_1)$ and $(x_2, y_2)$ in $X^2$, there is $g\in G$ such that $y_1=gx_1$ and $y_2=gx_2$.    See \cite{KC} for a reminder about transitive group actions. The following two lemmas are due to Ritt~\cite{R22} and Fried~\cite{F70}. 
\begin{lemma}\label{prim}
If $K$ is a field with $\charp(K)=0$ and $f\in K[x]$, then $f$ is indecomposable if and only if $\Mon(f)$  is primitive.
\end{lemma}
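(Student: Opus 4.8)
The plan is to reduce the statement to the Galois correspondence together with the standard characterization of primitivity by maximality of point stabilizers. Write $G=\Mon(f)=\Gal(L/K(t))$ with $t=f(x)$, where $L$ is the splitting field of $f(X)-t$ over $K(t)$ and $x\in L$ is a fixed root. Since $t=f(x)\in K(x)$, we have $K(t)(x)=K(x)$, so the stabilizer of $x$ in $G$ is exactly $G_x=\Gal(L/K(x))$. As noted after Definition~\ref{mon}, $G$ is transitive because $f(X)-t$ is irreducible over $K(t)$, and since $\charp(K)=0$ the extension $L/K(t)$ is separable.

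First I would recall (and, if desired, prove in one line) the classical group-theoretic fact that a transitive permutation group $G$ is primitive if and only if a point stabilizer $G_x$ is a maximal subgroup of $G$: nontrivial block systems through a fixed point correspond bijectively to subgroups strictly between $G_x$ and $G$. Thus primitivity of $\Mon(f)$ is equivalent to the nonexistence of a subgroup $H$ with $G_x\subsetneq H\subsetneq G$. By the fundamental theorem of Galois theory applied to $L/K(t)$, the subgroups $H$ with $G_x\le H\le G$ are in inclusion-reversing bijection with the intermediate fields $M$, $K(t)\subseteq M\subseteq K(x)$, via $H\mapsto L^H$ and $M\mapsto \Gal(L/M)$. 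Hence $G_x$ is maximal in $G$ if and only if there is no field strictly between $K(t)$ and $K(x)$.

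The remaining task, and the one I expect to be the main obstacle, is to match intermediate fields with genuine \emph{polynomial} decompositions of $f$. By L\"uroth's theorem every such $M$ equals $K(w)$ for some $w\in K(x)$; writing $w=h(x)$ and, using $t\in M$, $t=g(w)$, we obtain $f=g\circ h$ as rational functions. The delicate point is to normalize $g,h$ to be polynomials of the right degree: because $f$ is a polynomial, the place $x=\infty$ is the unique place of $K(x)$ lying above the pole of $t$, and it is totally ramified there, so there is a unique place of $K(w)$ over that pole. Choosing the generator $w$ to have its only pole at that place forces $h\in K[x]$ and then $g\in K[x]$, after the harmless linear adjustment described in \eqref{wlog2}. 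Under this correspondence $M=K(t)$ and $M=K(x)$ yield the trivial factorizations, while any field strictly in between yields $f=g\circ h$ with $\deg g,\deg h\ge 2$, and conversely every such decomposition produces a strictly intermediate field $K(h(x))$.

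Combining the three equivalences gives the result: $f$ is decomposable $\iff$ there is a field strictly between $K(t)$ and $K(x)$ $\iff$ $G_x$ is not maximal in $G$ $\iff$ $\Mon(f)$ is not primitive. Taking contrapositives yields precisely that $f$ is indecomposable if and only if $\Mon(f)$ is primitive. I would take care that the degree bounds $\deg g,\deg h\ge 2$ correspond exactly to the strictness of both inclusions $K(t)\subsetneq M\subsetneq K(x)$, so that the notion of indecomposability used here (excluding linear factors, as explained around \eqref{wlog2}) lines up with maximality of $G_x$.
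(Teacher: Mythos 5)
Your proof is correct and follows essentially the same route as the paper: primitivity is equivalent to maximality of the point stabilizer $G_x$, which via the Galois correspondence and L\"uroth's theorem is equivalent to the nonexistence of proper intermediate fields between $K(f(x))$ and $K(x)$, i.e.\@ to indecomposability of $f$. The only difference is that you spell out the ramification-at-infinity argument showing that an intermediate field yields a genuine \emph{polynomial} (not merely rational) decomposition, a step the paper leaves implicit in its appeal to L\"uroth's theorem.
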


A transitive permutation group is primitive if and only if point stabilizers are maximal subgroups, see \cite{KC}. By L\"uroth's theorem, $f\in K[x]$ is indecomposable if and only if there are no proper intermediate fields of the extension $K(x)/K(f(x))$. By the Galois correspondence, this is the same as saying that  there are no proper subgroups between $\Mon(f)$ and its point stabilizers. This proves Lemma~\ref{prim}.

\begin{lemma}\label{doublyL}
 If $K$ is a field with $\charp(K)=0$ and $f\in K[x]$, then $(f(x)-f(y))/(x-y)\in K[x, y]$ is irreducible over $K$ if and only if $\Mon(f)$ is doubly transitive. 
\end{lemma}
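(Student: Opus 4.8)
The plan is to translate the irreducibility of $(f(X)-f(Y))/(X-Y)$ into a statement about the orbits of a point stabilizer inside $\Mon(f)$, and then to invoke the elementary fact that a transitive group is doubly transitive precisely when a point stabilizer acts transitively on the remaining points. Throughout I rely on the setup of Definition~\ref{mon} and on the correspondence between factorizations over an intermediate field and orbits of the associated Galois group.

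First I would fix the monodromy picture. Let $t$ be transcendental over $K$, let $L$ be the splitting field of $f(X)-t$ over $K(t)$, and let $G=\Mon(f)=\Gal(L/K(t))$ act on the set $\Omega$ of roots of $f(X)-t$, which has $n=\deg f$ elements since $f(X)-t$ is separable (as $\charp(K)=0$) and irreducible over $K(t)$. Fix one root $x$, so that $t=f(x)$; then $x$ is transcendental over $K$, because $[K(x):K(t)]=n<\infty$ while $t$ is transcendental. The field $K(x)$ is the fixed field of the point stabilizer $G_x=\Gal(L/K(x))$, and $G$ is already transitive, so $G$ is doubly transitive if and only if $G_x$ acts transitively on $\Omega\setminus\{x\}$.

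Next I would apply the orbit--factorization correspondence for the Galois extension $L/K(x)$ with group $G_x$ to the separable polynomial $f(X)-f(x)\in K(x)[X]$, whose roots all lie in $\Omega\subseteq L$: it factors over $K(x)$ into one irreducible factor $\prod_{\alpha\in O}(X-\alpha)$ per $G_x$-orbit $O\subseteq\Omega$. Since $x$ is transcendental we have $f'(x)\neq 0$, so $x$ is a simple root and the orbit $\{x\}$ contributes exactly the linear factor $X-x$; dividing it out, $(f(X)-f(x))/(X-x)$ is irreducible over $K(x)$ if and only if $G_x$ has a single orbit on $\Omega\setminus\{x\}$, that is, if and only if $G$ is doubly transitive. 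It then remains to pass back to the polynomial ring: identifying the transcendental $x$ with a variable $Y$ turns $K(x)$ into $K(Y)$ and $(f(X)-f(x))/(X-x)$ into $P(X,Y):=(f(X)-f(Y))/(X-Y)\in K[Y][X]$. Viewed in $X$, the leading coefficient of $P$ equals the (constant) leading coefficient of $f$, hence a unit of $K[Y]$, so $P$ is primitive over the UFD $K[Y]$; Gauss's lemma then gives that $P$ is irreducible in $K[X,Y]=K[Y][X]$ if and only if it is irreducible in $K(Y)[X]$. Chaining these equivalences yields the lemma.

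The step I expect to demand the most care is the orbit--factorization correspondence together with the identification of $K(x)$ as the fixed field of $G_x$: one must confirm that $L/K(x)$ is Galois (automatic, since $L$ is a splitting field) so that the Galois correspondence applies, and that the factor attached to the orbit of $x$ really is the single linear factor $X-x$, which hinges on $x$ being a simple root, i.e.\ on $f'(x)\neq 0$.
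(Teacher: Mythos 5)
Your proof is correct and follows essentially the same route as the paper's: the paper likewise reduces double transitivity to transitivity of a point stabilizer, identifies this with irreducibility of $\phi(x,x_0)=(f(x)-f(x_0))/(x-x_0)$ over $K(x_0)$ for a root $x_0$ of $f(X)-t$, uses transcendence of that root to rename it as a variable, and finishes with Gauss's lemma. The only difference is that you spell out the orbit--factorization correspondence and the primitivity check that the paper leaves to the cited reference \cite{T95}.
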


Let $\phi(x, y)=(f(x)-f(y))/(x-y)\in K[x, y]$. In short, Lemma~\ref{doublyL} follows from the fact that a group is doubly transitive on $X$ if and only if point stabilizer of any $x_0\in X$ acts transitively on $X\setminus \{x_0\}$, see \cite{KC}. Thus, $\Mon(f)$ is doubly transitive  if and only if $\phi(x, x_0)$  is irreducible over $K(x_0)$. Since $x_0$ and $x$ are algebraically independent over $K$, this is equivalent to irreducibility of $\phi(x, y)$ over $K(y)$, which is by Gauss Lemma equivalent to irreducibility of $\phi(x, y)$ over $K$.
For a detailed proof, see \cite{T95}.


\begin{lemma}\label{mult}
 If $K$ is a field with $\charp(K)=0$  and $e_1,e_2, \ldots, e_k$ are the multiplicities of the roots of $f(x)-x_0$, where $f\in K[x]$ with  $\charp(K)=0$ and $x_0\in \overline{K}$, then $\Mon(f)$ contains an element having cycle lengths $e_1,e_2, \ldots, e_k$. Furthermore, if $n=\deg f$, then $\Mon(f)$ contains an $n$-cycle.
\end{lemma}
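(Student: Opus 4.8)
The plan is to prove Lemma~\ref{mult}, which asserts that for $f\in K[x]$ of degree $n$ over a field of characteristic zero, if $x_0\in\overline{K}$ is any value and $e_1,\dots,e_k$ are the multiplicities of the roots of $f(X)-x_0$, then $\Mon(f)$ contains an element with cycle type $(e_1,\dots,e_k)$, and in particular an $n$-cycle.

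\bigskip

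\textbf{Approach via specialization and inertia.} The core idea is that $\Mon(f)=\Gal(L/K(t))$ is the Galois group of a cover of the $t$-line, and the cycle type of an element can be read off from the local branching behavior above a point $t=x_0$. First I would fix the setup: let $x$ be a root of $f(X)-t$ in the splitting field $L$ over $K(t)$, so $t=f(x)$ and $\Mon(f)$ acts on the $n$ roots $x=x^{(1)},\dots,x^{(n)}$. The multiplicities $e_1,\dots,e_k$ are precisely the ramification indices of the places of $K(x)$ lying over the place $t=x_0$ of $K(t)$, since the roots of $f(X)-x_0$ (counted with multiplicity) correspond to how the fiber over $x_0$ degenerates. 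The key step is to invoke the theory of inertia groups (or, equivalently, the Riemann existence theorem / branch-cycle description over $\C$): choosing a place $P$ of $L$ over $t=x_0$, the decomposition group modulo the inertia group is trivial here because the residue field extension is trivial in characteristic zero for the relevant local situation, so the inertia group at $P$ is cyclic, generated by an element $\sigma\in\Mon(f)$. The action of this generator $\sigma$ on the roots has orbits whose lengths are exactly the ramification indices $e_1,\dots,e_k$; this is the standard statement that the inertia generator permutes the branches of the cover with cycle lengths equal to the local ramification indices.

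\bigskip

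\textbf{Extracting the cycle type.} To make this precise, I would pass to the local picture. The ramification indices of the places of $K(x)$ over $t=x_0$ partition $n=\sum_i e_i$ and match the multiplicities of the roots of $f(X)-x_0$, because $f(X)-x_0=a\prod_i(X-\beta_i)^{e_i}$ where the $\beta_i$ are the distinct roots. A generator $\sigma$ of the inertia group at a place of $L$ above $x_0$ acts on the coset space $\Mon(f)/\Gal(L/K(x))$ (which is identified with the set of roots) with cycle lengths equal to these ramification indices: each orbit corresponds to one place $\beta_i$ of $K(x)$ above $x_0$, with orbit length equal to its ramification index $e_i$. This gives the desired element of cycle type $(e_1,\dots,e_k)$. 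For the final assertion about an $n$-cycle, I would apply this with $x_0=\infty$: since $f$ is a polynomial of degree $n$, the place at infinity of $K(t)$ is totally ramified in $K(x)$ (there is a single place above $\infty$ with ramification index $n$), so the inertia generator there is an $n$-cycle. Equivalently, one can use that $f(X)-x_0$ has its single "root at infinity" of multiplicity $n$.

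\bigskip

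\textbf{Main obstacle and how I would handle it.} The delicate point is justifying that the inertia group is cyclic with a generator acting with cycle lengths equal to the ramification indices, and that these ramification indices really coincide with the multiplicities $e_i$. In characteristic zero the tame-ramification theory guarantees that inertia groups of the Galois cover are cyclic, and the standard local analysis (using that $L/K(t)$ is Galois, so all places above $x_0$ are conjugate) shows the generator's orbit lengths on the roots equal the ramification indices. The cleanest self-contained route, which I expect to rely on, is the transfer of the problem to $\C$ via an embedding of the base field and a topological branch-cycle argument, or alternatively a direct valuation-theoretic computation with Puiseux expansions of the roots of $f(X)-t$ near $t=x_0$: a small loop around $x_0$ permutes the Puiseux branches in cycles of length equal to the multiplicities $e_i$. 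I would cite the standard references already available in the paper (for instance the treatment of monodromy in \cite{T95} or \cite{ZM}) for the identification of this topological generator with an element of $\Mon(f)$, rather than reproving the inertia theory from scratch. The routine verification that $\sum_i e_i=n$ and that the root-at-infinity case yields a full $n$-cycle I would state without belaboring.
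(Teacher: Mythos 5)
Your proposal is correct and follows essentially the route the paper itself relies on: the paper gives no proof of Lemma~\ref{mult}, instead citing Turnwald~\cite{T95} for an elementary proof and noting the classical Riemann-surface derivation over $\C$, and both of these are precisely the inertia-group/branch-cycle argument (equivalently, the Puiseux-expansion computation at $t=x_0$ and at infinity) that you outline. The only point you should make explicit is that one first passes to the geometric monodromy group $\Gal(L\overline{K}/\overline{K}(t))$ --- so that the residue extensions at places above $t=x_0$ are trivial and the decomposition group equals the (cyclic, tame) inertia group --- and then uses that this group embeds, compatibly with the action on the roots, as a subgroup of $\Mon(f)=\Gal(L/K(t))$; with that standard adjustment your sketch is exactly the cited proof.
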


Lemma~\ref{mult} has been long known in the case $K=\C$, but derived in the language of Riemann surfaces. 
Turnwald~\cite{T95} gave an elementary proof.  The proofs of all the above mentioned results can be  found in \cite{T95} and \cite{S00}.

Every doubly transitive group is primitive. This translates to saying that if  $\phi(x, y)=(f(x)-f(y))/(x-y)\in K[x, y]$ is irreducible over $K$, then $f$ is indecomposable, which clearly holds. On the other hand, if $\Mon(f)$ is primitive it is doubly transitive as soon as it is of composite degree $n$. This follows by a theorem of Schur (see \cite[p.~34]{W64}), which states that  \emph{a primitive permutation group of composite degree $n$ which contains an $n$-cycle,  is doubly transitive}. Burnside showed (see \cite[p.~127]{P68}) that \emph{if a transitive permutation group of prime degree is not doubly transitive,  it may be identified with a group of affine transformations of $\Z/p\Z$}. The latter two results of Schur and Burnside are classical results about permutation groups and were among the main ingredients of Fried's paper~\cite{F70} in proving the following theorem.

\begin{theorem}\label{Fried1}
Let $K$ be a field with $\charp(K)=0$ and $f\in K[x]$ with $\deg f\geq 3$. The following assertions are equivalent.
\begin{itemize}
\item[i)] $(f(x)-f(y))/(x-y)$ is irreducible over $\overline{K}$,
\item[ii)] $f(x)$ is indecomposable and if $n$ is an odd prime then $f(x)\neq e_1 D_n(c_1x+c_0, \alpha)+e_0$ with $e_i, c_i,\alpha\in K$
with $\alpha, a, b, c\in K$, with $a=0$ if $n=3$, where $D_n(x, a)$ is the $n$-th Dickson polynomial with parameter $a$.
\end{itemize}
\end{theorem}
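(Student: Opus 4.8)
The plan is to transfer everything to the \emph{geometric} monodromy group $\bar G:=\Gal\bigl(f(X)-t/\overline{K}(t)\bigr)$, viewed as a permutation group on the $n:=\deg f$ roots of $f(X)-t$. Since $\charp(\overline K)=0$, Lemma~\ref{doublyL} applies verbatim over $\overline K$ and shows that i) is equivalent to the statement that $\bar G$ is doubly transitive. Hence the whole theorem reduces to proving that $\bar G$ is doubly transitive precisely when $f$ is indecomposable and, for $n$ an odd prime, $f$ is not a linear rearrangement $e_1D_n(c_1x+c_0,\alpha)+e_0$ of a Dickson polynomial (the pure cube being the only forbidden case when $n=3$).

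For i)$\Rightarrow$ii) I would first record that each forbidden $f$ has reducible $\phi(x,y):=(f(x)-f(y))/(x-y)$ over $\overline K$. If $f=e_1D_n(c_1x+c_0,\alpha)+e_0$, then up to a nonzero scalar and the substitution $x\mapsto c_1x+c_0$ the quotient $\phi$ equals $(D_n(u,\alpha)-D_n(v,\alpha))/(u-v)$, which by the factorization \eqref{factor} is a product of $(n-1)/2\ge 2$ nonconstant quadratics once $n\ge 5$ is odd, and which equals $\prod_{\zeta^n=1,\,\zeta\ne1}(x-\zeta y)$ when $\alpha=0$ (the map $x^n$); in both situations $\phi$ is reducible. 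For $n=3$ the same substitution gives $\phi=x^2+xy+y^2-3\alpha$, whose discriminant $-3(y^2-4\alpha)$ is a square in $\overline K[y]$ only for $\alpha=0$, which is exactly why at $n=3$ one forbids only the cube. Thus if $\bar G$ is doubly transitive then $\phi$ is irreducible and $f$ is none of the forbidden polynomials; moreover doubly transitive groups are primitive, so $f$ is indecomposable over $\overline K$ by Lemma~\ref{prim} and hence over $K$ by Proposition~\ref{overline}. This establishes ii).

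For ii)$\Rightarrow$i), assume $f$ indecomposable and not forbidden; by Proposition~\ref{overline} and Lemma~\ref{prim}, $\bar G$ is primitive, and by Lemma~\ref{mult} it contains an $n$-cycle. If $n$ is composite, Schur's theorem gives that $\bar G$ is doubly transitive, so i) holds. If $n=p$ is an odd prime, I argue by contradiction: suppose $\bar G$ is not doubly transitive. By Burnside's theorem $\bar G$ is then a transitive subgroup of $\mathrm{AGL}(1,p)$, say $\bar G=\F_p\rtimes H$ with $H\subsetneq \F_p^{\times}$ a proper subgroup (properness being forced by the failure of double transitivity, since the full $\mathrm{AGL}(1,p)$ is sharply $2$-transitive). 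I would then invoke the genus-zero constraint: $f$ realizes a degree-$p$ cover $\mathbb{P}^1\to\mathbb{P}^1$ of a rational source, so Riemann--Hurwitz reads $\sum_{P}(e_P-1)=2p-2$. Total ramification over $\infty$ accounts for $p-1$, leaving $p-1$ for the finite branch points. A translation monodromy at a finite branch point would mean $f-c=a(x-\beta)^p$, i.e.\ $f\sim x^p$; otherwise every finite branch monodromy is a non-translation affine element with one fixed point and $(p-1)/d$ cycles of common length $d\ge 2$, contributing $(p-1)(1-1/d)$ to the ramification. The relation $\sum_i(1-1/d_i)=1$ with each summand $\ge 1/2$ then forces at most two finite branch points, and in the two-point case $d_1=d_2=2$; thus $H=\{1\}$ (so $\bar G=C_p$ and $f\sim x^p$) or $H=\{\pm1\}$ (so $\bar G$ is dihedral and $f\sim D_p(\,\cdot\,,\alpha)$ with $\alpha\ne0$).

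In either outcome $f$ coincides over $\overline K$ with a linear rearrangement of a Dickson polynomial; a final descent, normalizing $f$ and comparing coefficients as in the proof of Proposition~\ref{overline}, shows the parameters $e_i,c_i,\alpha$ can be chosen in $K$. Hence $f$ is a forbidden polynomial, contradicting ii). For $p=3$ the dihedral case $H=\{\pm1\}=\F_3^{\times}$ coincides with the full, doubly transitive $\mathrm{AGL}(1,3)=S_3$, so only $H=\{1\}$, i.e.\ $f\sim x^3$, survives the assumption; this is the source of the $n=3$ proviso. Therefore $\bar G$ must be doubly transitive and i) holds. The crux --- and where essentially all of the work lies --- is this prime-degree affine classification via the genus-zero Riemann--Hurwitz count together with the identification of dihedral-monodromy polynomials with Dickson polynomials; the rationality descent of the Dickson parameters is the secondary technical point.
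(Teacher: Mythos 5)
Your proposal is correct and follows the same skeleton as the paper's (Turnwald-style) sketch --- Lemma~\ref{doublyL} over $\overline{K}$ to translate i) into double transitivity of the geometric monodromy group, the factorization \eqref{factor} plus decomposability for i)$\Rightarrow$ii), and then primitivity (Proposition~\ref{overline}, Lemma~\ref{prim}), the $n$-cycle from Lemma~\ref{mult}, Schur for composite degree, and Burnside to place $\Mon(f)$ inside $\mathrm{AGL}(1,p)$ in the prime case --- but you organize the decisive prime-degree step differently. The paper converts the affine cycle types into root-multiplicity patterns of every fibre ($f(x)-y_0$ is a $p$-th power or has one simple root and $(p-1)/r$ roots of multiplicity $r$) and then cites Turnwald~\cite{T95} wholesale for the technical classification that such polynomials are linearly related to Dickson polynomials, with no constraint obtained on $r$. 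You instead run a genus-zero Riemann--Hurwitz count on the branch points: total finite ramification $p-1$ forces either a single totally ramified finite branch point (so $f\sim x^p$, the cyclic case $H=\{1\}$) or exactly two finite branch points with $d_1=d_2=2$ (so $\Mon(f)$ is dihedral, $H=\{\pm1\}$), and only then invoke the classical cyclic/dihedral-to-Dickson identification. This buys a concrete structural reduction before any citation --- you pin down $r=2$ and the group up to isomorphism, so what remains to be quoted is only the (Ritt-style) statement that prime-degree polynomials with dihedral monodromy are linearly related to $D_p$ --- at the cost of importing the Riemann existence theorem/Riemann--Hurwitz machinery, which the paper's purely algebraic presentation avoids. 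Your explicit check that $n=3$, $\alpha\neq 0$ yields the irreducible conic $x^2+xy+y^2-3\alpha$ is also a nice addition the paper leaves implicit. Two caveats, both at the same level of incompleteness as the paper itself: the dihedral-to-Dickson identification and the rationality descent of the parameters $e_i,c_i,\alpha$ to $K$ are asserted rather than proved (the descent is not quite the coefficient comparison of Proposition~\ref{overline}; it needs the Dickson normal form), but since the paper defers exactly these points to \cite{T95}, your proof stands on equal footing.
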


Here are the main ingredients of the proof of Theorem~\ref{Fried1}, as presented  by Turnwald~\cite{T95}.
Note that if $f$ is decomposable, then $\phi(x, y)=(f(x)-f(y))/(x-y)$ is clearly reducible over $K$. Since also \eqref{factor} holds, the first statement clearly implies the second. If $f$ is of composite degree and  $f(x)= e_1 D_n(c_1x+c_0, \alpha)+e_0$ with $e_i, c_i,\alpha\in K$, i.e.\@ $f$ is \emph{linearly related to Dickson polynomial}, then $f$ is decomposable by $D_{mn}(x, a)=D_m(D_n(x, a), a^n)$ for $m, n\in \N$. To prove the converse, assume that $f$ is indecomposable. Then $\Mon(f)$, where $f$ is seen as with coefficients in $\overline{K}$, is primitive, by Proposition~\ref{overline} and Proposition~\ref{prim}. Assume that $\Mon(f)$, where $f$ is seen as with coefficients in $\overline{K}$, is not doubly transitive. By Lemma~\ref{doublyL}, this is the same as saying that $(f(x)-f(y))/(x-y)$ is reducible over $\overline{K}$. Then $\Mon(f)$ is of prime degree $p$ by Schur's result. By Burnside's result, $\Mon(f)$ may be identified with a group of affine transformations $ax+b$ of $\Z/p\Z$.  If $a=1$, $b=0$, this permutation is identity, if $a=1$, $b\neq 0$ it is a $p$-cycle, and if $a\neq 1$, then it is of cycle type $1, r, \ldots, r$, where $r$ is the least positive integer such that $a^r=1$. By Lemma~\ref{mult} it follows that for any $y_0\in \overline{K}$, $f(x)-y_0$ is either a $p$-th power or has one simple root and $(n-1)/r$ roots of multiplicity $r$. The only polynomials that satisfy the latter property are those linearly related to a Dickson polynomial. The proof is technical and can be found in \cite{T95}.

\subsection{Polynomials with distinct critical values}\label{primdoub}

In this section as well, $K$ is an arbitrary field of characteristic zero. 

\begin{lemma}\label{delta}
Let $K$ be a field with  $\charp(K)=0$ and $f, g, h\in K[x]$ such that $f(x)=g(h(x))$ and $\deg g>1$. Then for every $\gamma_0\in \overline{K}$ a root of $g'$ and $\gamma=g(\gamma_0)$ we have that every root of $h(x)-\gamma_0$ is a root of both $f(x)-\gamma$ and $f'(x)$. 
\end{lemma}
\begin{proof}
If $h(x_0)=\gamma_0$, then $f(x_0)=g(h(x_0))=g(\gamma_0)=\gamma$ and $f'(x_0)=g'(h(x_0))h'(x_0)=g'(\gamma_0)h'(x_0)=0$.
\end{proof}

Recal that a polynomial  is called {\it Morse} (initially by Serre~\cite[p.~39]{S92})  if it has all simple critical points and all distinct critical values. 
Note that if $f\in K[x]$ is Morse, then $f$ is indecomposable by Lemma~\ref{delta}. If $f\in K[x]$ has all simple critical points and equal critical values at at most two distinct critical points, by Lemma~\ref{delta} it follows that if $f(x)=g(h(x))$ with $\deg g>1$, then $\deg h\leq 2$.


By following the approach of Turnwald~\cite{T95} and by using Fried's techniques for proving Theorem~\ref{Fried1}, described in the previous section, we show the following result.

\begin{proposition}\label{distroots}
Let $K$ be a field with $\charp(K)=0$ and $f\in K[x]$ with at least two distinct critical points  and all distinct critical values. Then $\Mon(f)$ is doubly transitive. In particular,  $\Mon(f)$ is primitive, i.e.\@ $f$ is indecomposable.
\end{proposition}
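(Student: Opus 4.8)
The plan is to deduce double transitivity of $\Mon(f)$ from the primitivity guaranteed by indecomposability, treating the prime and composite degree cases separately via the theorems of Schur and Burnside, exactly in the spirit of Turnwald's rendering of Fried's argument. Write $n=\deg f$ throughout. I will argue group-theoretically; double transitivity then gives primitivity for free, so the final clause of the statement comes at no extra cost. (A shorter alternative is to invoke Theorem~\ref{Fried1} directly: once $f$ is indecomposable, the only way $(f(x)-f(y))/(x-y)$ could be reducible over $\overline{K}$ is for $f$ to be linearly related to a Dickson polynomial $D_n(\cdot,\alpha)$ with $n$ an odd prime; but $D_n(\cdot,0)$ has a single critical point while $D_n(\cdot,\alpha)$ with $\alpha\neq0$ and $n\geq 5$ has distinct critical points sharing a critical value by Corollary~\ref{critdick}, and for $n=3$ the exceptional case forces $f$ linearly related to $x^3$; all of these are incompatible with the hypotheses, so the quotient is irreducible and Lemma~\ref{doublyL} finishes it.)

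First I would establish that $f$ is indecomposable, which by Lemma~\ref{prim} makes $\Mon(f)$ primitive of degree $n$. Suppose instead that $f=g\circ h$ with $\deg g,\deg h\geq 2$, normalized as in \eqref{wlog2}, and pick any root $\gamma_0\in\overline{K}$ of $g'$. By Lemma~\ref{delta} every root of $h(x)-\gamma_0$ is a critical point of $f$ with the one critical value $g(\gamma_0)$; since $f$ has all distinct critical values, $h(x)-\gamma_0$ admits only a single distinct root, forcing $h(x)-\gamma_0=c(x-a)^{\deg h}$. Two such identities for distinct roots of $g'$ would force their difference, a nonzero constant, to equal a polynomial of positive degree, which is impossible; hence $g'=c'(x-\gamma_0)^{\deg g-1}$ and $h(x)=c(x-a)^{\deg h}+\gamma_0$. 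Then $f$ is linearly related to $(x-a)^n$ and has only one critical point, contradicting the hypothesis of at least two distinct critical points. So $f$ is indecomposable.

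If $n$ is composite, Lemma~\ref{mult} supplies an $n$-cycle in the primitive group $\Mon(f)$, and Schur's theorem that a primitive group of composite degree containing an $n$-cycle is doubly transitive closes this case at once.

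The main obstacle is the case where $n$ is prime. Here I would assume for contradiction that $\Mon(f)$ is not doubly transitive, so by Burnside's theorem it embeds in the affine group $\mathrm{AGL}(1,n)$ of $\Z/n\Z$. The decisive input is the shape of the fibres over critical values: because all critical values are distinct, a critical point $\beta$ is the \emph{unique} multiple root of $f(x)-f(\beta)$, so this fibre has multiplicity pattern $(e,1^{\,n-e})$ with $e\geq 2$, and Lemma~\ref{mult} places an element of this single-cycle type in $\Mon(f)$. But in $\mathrm{AGL}(1,n)$ a nonidentity element $x\mapsto ax+b$ with $a$ of multiplicative order $r$ has exactly one fixed point and $(n-1)/r$ cycles of length $r$, or is an $n$-cycle when $a=1$; the only single-cycle types are thus the $n$-cycle ($e=n$) and $(1,n-1)$ ($e=n-1$). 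The value $e=n$ would make $\beta$ the only critical point, which is excluded, so $e=n-1$ at every critical point. Two distinct critical points, each then a root of the degree-$(n-1)$ polynomial $f'$ of multiplicity $n-2$, force $2(n-2)\leq n-1$, i.e. $n=3$. Finally, for $n=3$ the pattern $(2,1)$ means $\Mon(f)$ contains a transposition, whereas the only transitive proper subgroup of $\mathrm{AGL}(1,3)\cong S_3$ is $A_3$, which contains none. This contradiction shows $\Mon(f)$ is doubly transitive in every case, and in particular primitive.
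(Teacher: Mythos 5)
Your proof is correct and follows essentially the same route as the paper's: indecomposability via Lemma~\ref{delta} exactly as in the paper, then double transitivity through the Schur/Burnside (Fried--Turnwald) machinery with the same fibre-multiplicity count $2(n-2)\leq n-1$ forcing $n\leq 3$ and a transposition argument finishing degree $3$ (where the paper cites ``Fried's proof of Theorem~\ref{Fried1}'' as a black box and appeals to Jordan's theorem, you unpack Schur and Burnside explicitly and instead enumerate the transitive subgroups of $S_3$ --- an immaterial difference). Even your parenthetical shortcut via Theorem~\ref{Fried1} and Corollary~\ref{critdick} is precisely the paper's own Remark~\ref{Fried}.
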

\begin{proof}
We first show that $f$ is indecomposable.
Assume to the contrary and write $f(x)=g(h(x))$ with $\deg g\geq 2$, $\deg h\geq 2$, $h$ monic and $h(0)=0$ (as in \eqref{wlog2}). Let $\gamma_0\in \overline{K}$ be a root of $g'$ and $\gamma=g(\gamma_0)$. Then every root of $h(x)-\gamma_0$ is a root of both $f(x)-\gamma$ and $f'(x)$ by Lemma~\ref{delta}. If there exist two distinct roots of $h(x)-\gamma_0$, say $x_0$ and $x_1$, then $f'(x_0)=f'(x_1)=0$ and $f(x_0)=f(x_1)=\gamma$, which cannot be by assumption. Thus $h(x)-\gamma_0$ does not have two distinct roots, i.e.\@ $h(x)=(x-x_0)^k+\gamma_0$, where $k=\deg h\geq 2$. Also, if there exist two distinct roots of $g'$, say $\gamma_0$ and $\gamma_1$, then analogously $h(x)=(x-x_1)^k+\gamma_1$ for some $x_1\in \overline{K}$. Then $(x-x_0)^k+\gamma_0=(x-x_1)^k+\gamma_1$. By taking derivative, we get $k(x-x_0)^{k-1}=k(x-x_1)^{k-1}$, wherefrom $x_0=x_1$, since $k-1\geq 1$. Then also $\gamma_0=\gamma_1$. Thus $g'(x)=a(x-\gamma_0)^{t-1}$, where $t=\deg g\geq 2$, $a\in K$. Then 
\begin{align*}
f'(x)=g'(h(x))h'(x)&=a k(h(x)-\gamma_0)^{t-1}(x-x_0)^{k-1}=\\
&=ak (x-x_0)^{k(t-1)}(x-x_0)^{k-1}=ak(x-x_0)^{n-1}.
\end{align*}
However, this contradicts the assumption that $f'$ has at least two distinct roots. Thus, $\Mon(f)$ is primitive. 

Assume that $\Mon(f)$ is not doubly transitive and $\deg f>3$. By Fried's proof of Theorem~\ref{Fried1} (given below the theorem), it follows that  for any $y_0\in \overline{K}$, $f(x)-y_0$ is either a $p$-th power, or has one simple root and $(p-1)/r$ roots of multiplicity $r$. The former cannot be since $f$ has at least two distinct critical points. Assume the latter. If $x_0$ is a critical point of $f$, then the multiplicities of the roots of $f(x)-f(x_0)$ are $1, 1, \ldots, 1, k$, where $k\geq 2$ is the multiplicity of $x_0$, since $f$ has all distinct critical values. By assumption, $k=p-1$, where $p=\deg f$. If $x_1\neq x_0$ is another root of $f'$, then in the same way the multiplicity of $x_1$ is $p-2$. So, $2(p-2)\leq p-1$, and $p\leq 3$, a contradiction. If $p=3$, then $k=2$, and $\Mon(f)$ contains an element of cycle type $1, 2$ by Lemma~\ref{mult}. Since $\Mon(f)$ is a primitive permutation group and contains a transposition,  it is symmetric by Jordan's theorem~\cite[Thm.~13.3]{W64}. In particular, $\Mon(f)$ is doubly transitive.

\begin{remark}\label{Fried}
To show that $\Mon(f)$ in Theorem~\ref{distroots} is doubly transitive, after it is shown that it is primitive, it suffices to show that $f$ is not linearly related to Dickson polynomial, by Theorem~\ref{Fried1}. By Corollary~\ref{critdick}, if $f$ is of type $f(x)= e_1 D_n(c_1x+c_0, \alpha)+e_0$ with $n>3$, $e_i, c_i,\alpha\in K$ and $\alpha\neq 0$, then $f$ has two distinct critical points with equal critical values, which contradicts the assumption on $f$. If $\alpha=0$ and $n\geq 3$, then $f(x)=e_1(c_1x+c_0)^n+e_0$ has no two distinct critical points, a contradiction with the assumption on $f$. 
\end{remark}
 \end{proof}


\begin{remark}
Let $K$ be a field with $\charp(K)=0$ and $f\in K[x]$. If $f$  has no two distinct critical points, then $f'(x)=a(x-x_0)^{n-1}$, and thus $f(x)=a/n(x-x_0)^n+\textnormal{const.}$ Such polynomial is indecomposable if and only if $n$ is prime. 

If $f$ has two distinct critical points, but has at two equal critical values, then $f$ can be decomposable. Indeed,  $f(x)=(x^2-1)^2$, $f'(x)=4x(x^2-1)$, $f(1)=f(-1)=0$.
\end{remark}

If $K$ is a field with $\charp(K)=0$ and $f\in K[x]$ has a critical point of multiplicity at most $2$ and all distinct critical values, then $\Mon(f)$ is either alternating or symmetric.  Namely, one easily sees that for such $f$, $\Mon(f)$ is primitive (since for such $f$ either $\deg f\in \{2, 3\}$ or Proposition~\ref{distroots} applies). If $x_0$ is a root of $f'$ of multiplicity at most $2$, it follows that all the roots  of $f(x)-f(x_0)$, but $x_0$, are of multiplicity $1$ (since $f$ has all distinct critical values), and $x_0$ is of multiplicity $\leq 3$. So, $\Mon(f)$ contains either a $2$-cycle or a $3$-cycle by Lemma~\ref{mult}. Since $\Mon(f)$ is primitive and contains a $2$-cycle or $3$-cycle, it is either alternating or symmetric by \cite[Thm.~13.3]{W64}. If it contains a $2$-cycle it is symmetric. In this way Turnwald~\cite{T95} showed that if $f\in K[x]$ has one simple critical point and all distinct critical values, then $\Mon(f)$ is symmetric. This in particular implies that a trinomial $f(x)=a_1x^{n_1}+a_2x^{n_2}+a_3$, with $\gcd(n_1, n_2)=1$ and $a_i$'s in a field $K$ with $\charp(K)\nmid \deg f$, has symmetric monodromy group (via proof given below the Corollary~\ref{tri}). Also, the monodromy group of $f(x)=x^{n}+x^{n-1}+\cdots+x+1$ is symmetric, since it is Morse (by the proof given in the introduction).

Clearly, if $f\in K[x]$ is indecomposable and has a critical point $x_0$ of multiplicity at most $2$ such that $f(x_1)\neq f(x_0)$ for any other critical point $x_1$ of $f$, then $\Mon(f)$ is either alternating or symmetric by the same argument as above. If a group is symmetric or alternating, then it is doubly transitive, as soon as it is of degree at least $4$, see \cite{KC}. In particular, if $f\in K[x]$ with $\deg f\geq 4$ is indecomposable and has a critical point $x_0$ of multiplicity either $1$ or $2$ such that $f(x_1)\neq f(x_0)$ for any other critical point $x_1$ of $f$, then $(f(x)-f(y))/(x-y)$ is irreducible. If $\deg f=3$, the same holds, unless $f$ has no two distinct critical points.
One can compare these observations with Pakovich's results~\cite[Prop.~3.4 \& Cor~3.1.]{Pak11} for rational functions. Pakovich's techniques are analytic, and thus  completely different from ours.

We mention some sufficient conditions for $f$ to be indecomposable. Clearly,  $f$ is indecomposable if $\deg f$ is prime. If $f\in K[x]$ is such that the derivative $f'$ is irreducible over $K$, then $f$ is indecomposable over $K$ by $f'(x)=g'(h(x))h'(x)$. In \cite{DG06, DGT05}, it is shown that if $f(x)=a_nx^n+a_{n-1}x^{n-1}+\cdots+a_1x+a_0\in \Z[x]$ and $\gcd(n, a_{n-1})=1$, or $f$ is an odd polynomial and $\gcd(n, a_{n-2})=1$, then $f$ is indecomposable. 

Pakovich~\cite{Pak11} further showed that if $f, g\in K[x]$ have at most one common critical value, then $f(x)-g(y)\in K[x, y]$ is irreducible. In relation to Theorem~\ref{T:BT}, this shows that for such $f$ and $g$, there does not exist $\phi\in K[x]$ with $\deg \phi>1$ such that Equation~\eqref{BTeq} holds. So, in order to show the finiteness of solutions with a bounded denominator of the equation $f(x)=g(y)$ for such $f$ and $g$, one needs to check if $f$ and $g$ are linearly related to some standard or specific pair.


\subsection{Positive characteristic}

Throughout the paper, $K$ is a field of characteristic zero. We restricted to this case for simplicity and since our main results, namely Theorem~\ref{DEM} and Theorem~\ref{DEM2}, hold over number fields. 
However, several results hold, under certain assumptions, over  fields of positive characteristic. We now show that Proposition~\ref{dtpg} holds when $K$ is an arbitrary field and $f\in K[x]$ is such that $\charp(K)\nmid \deg f$.

Recall that for an arbitrary field $K$, and $f\in K[x]$ with $f'(x)\neq 0$, the \emph{monodromy group} of $f$ is defined as the Galois group of $f(x)-t$ over $K(t)$, where $t$ is transcendental over $K$, and is seen as a permutation group of the roots of this polynomial. 
Lemma~\ref{prim} and Lemma~\ref{doublyL} hold whenever $f'(x)\neq 0$, see \cite{T95}. 

One easily sees that by the same proof as in Proposition~\ref{dtpg}, if $K$ is a field and $f\in K[x]$ with $\charp(K)\nmid \deg f$ and at least two distinct critical points  and all distinct critical values, then $\Mon(f)$ is primitive.

Over an arbitrary field $K$, for $a\in K$ and Dickson polynomial $D_n(x, a)$ the following holds: $D_n(\lambda x, \lambda^2)=\lambda^nD_n(x, 1)$ for $\lambda^2=a$ and $(D_n(x, 1)^2-4)\cdot n^2=(x^2-4)D_n'(x,1)^2$. See e.g.\@ \cite{B95}. Thus, $D_n(x,a)$ has at most two distinct distinct critical values. If $n\geq 4$, $D_n(x,a)$ has at least two equal critical values.


Fried proved Theorem~\ref{Fried} assuming that $\charp(K)\nmid \deg f$ and that $\charp(K)$ does not divide the multiplicites of zeros of $f(x)-c\in \overline{K}[x]$ for any $c\in \overline{K}$. By the results of  M\"uller~\cite{M97}, it follows that Theorem~\ref{Fried1} holds if one assumes only $\charp(K)\nmid \deg f$, see also \cite[p.~57]{S00}. Then by the same proof as in Remark~\ref{Fried}, it follows that Proposition~\ref{dtpg} holds also when $K$ is arbitrary and $\charp(K)\nmid \deg f$.

\section{Polynomials with at most two equal critical values}
\begin{proof}[Proof of Proposition~\ref{novo}]
Assume $f(x)=g(h(x))$ with $\deg g\geq 2$, $\deg h>2$,  and without loss of generality that $h$ is monic and $h(0)=0$ (as in \eqref{wlog2}). 

Let $\gamma_0\in \overline{K}$ be a root of $g'$ and $\gamma=g(\gamma_0)$. Then every root of $h(x)-\gamma_0$ is a root of both $f(x)-\gamma$ and $f'(x)$ by Lemma~\ref{delta}. If there exist three distinct roots of $h(x)-\gamma_0$, say $x_0, x_1, x_2$, then $f'(x_0)=f'(x_1)=f'(x_2)=0$ and $f(x_0)=f(x_1)=f(x_2)=\gamma$, which cannot be by assumption. Thus $h(x)-\gamma_0$ does not have three distinct roots,  i.e.\@ $h(x)=(x-x_0)^{k_0}(x-x_1)^{k_1}+\gamma_0$ for some distinct $x_0, x_1\in \overline{K}$, and $k_0+k_1=k=\deg h\geq 3$, $k_0, k_1\geq 0$. 

If there do not exist two distinct roots of $g'$, then $g'(x)=a(x-\gamma_0)^{t-1}$, where $t=\deg g\geq 2$, $a\in K$, and
\begin{align*}
 f'(x)&=g'(h(x))h'(x)=a (h(x)-\gamma_0)^{t-1}h'(x)=\\
 &=a (x-x_0)^{k_0(t-1)}(x-x_1)^{k_1(t-1)}(x-x_0)^{k_0-1}(x-x_1)^{k_1-1}(kx -k_0x_1-k_1x_0)=\\
 &=a(x-x_0)^{k_0t-1}(x-x_1)^{k_1t-1}(kx -k_0x_1-k_1x_0),
\end{align*}
so \eqref{poss11} holds. If so, then $k_0, k_1\geq 1$, since otherwise $f'$ has no two distinct roots.

Assume henceforth that there exist two distinct roots of $g'$, say $\gamma_0$ and $\gamma_1$. Since $h(x)=(x-x_0)^{k_0}(x-x_1)^{k_1}+\gamma_0$ for some distinct $x_0, x_1\in \overline{K}$, and $k_0+k_1=k=\deg h$, $k_0, k_1\geq 0$, then
 analogously $h(x)=(x-y_0)^{l_0}(x-y_1)^{l_1}+\gamma_1$ for some $y_0, y_1\in \overline{K}$, and $l_0+l_1=k=\deg h$, $l_0, l_1\geq 0$. Assume without loss of generality that $k_0\geq k_1$ and $l_0\geq l_1$.
If $k_1=0$ and $l_1=0$, i.e.\@ if both $h(x)-\gamma_0$ and $h(x)-\gamma_1$ do not have two distinct roots, then $h(x)-\gamma_0=(x-x_0)^k$ and $h(x)-\gamma_1=(x-y_0)^k$. Then $h'(x)=k(x-x_0)^{k-1}=k(x-y_0)^{k-1}$, and $x_0=y_0$ since $k-1>1$. Then also $\gamma_0=\gamma_1$, a contradiction. 
If $h(x)-\gamma_0$ does not have two distinct roots, but $h(x)-\gamma_1$ does, so if $k_1=0$ and $l_1>0$, then
\[
h'(x)=k(x-x_0)^{k-1}=(x-y_0)^{l_0-1}(x-y_1)^{l_1-1}(kx -l_0y_1-l_1y_0).
\] 
It follows that $kx_0=l_0y_1+l_1y_0$,  $l_1=1$, and $x_0=y_0$, $l_0=k-1$. Then $x_0=y_1$, and $y_0=y_1$, a contradiction.
We conclude that $k_0, k_1, l_0, l_1\geq 1$, and
\begin{equation}\label{exph}
h(x)=(x-x_0)^{k_0}(x-x_1)^{k_1}+\gamma_0=(x-y_0)^{l_0}(x-y_1)^{l_1}+\gamma_1.
\end{equation}
By taking derivative $h'(x)$ we get
\begin{equation}\label{exph2}
(x-x_0)^{k_0-1}(x-x_1)^{k_1-1}(kx -k_0x_1-k_1x_0)=(x-y_0)^{l_0-1}(x-y_1)^{l_1-1}(kx -l_0y_1-l_1y_0).
\end{equation}

If \eqref{exph2} holds with $k_1=1$, then $l_1=1$, $k_0=l_0=k-1>1$ and
\[
(x-x_0)^{k_0-1}(kx -k_0x_1-x_0)=(x-y_0)^{k_0-1}(kx-k_0y_1-y_0).
\]
If $x_0=y_0$, then $kx -x_0-k_0x_1=kx -x_0-k_0y_1$, so $x_1=y_1$ and $\gamma_0=\gamma_1$, a contradiction. Thus,  $k_0=2$, $kx_0-y_0-2y_1=0$ and $ky_0-x_0-2x_1=0$, so $k=3$, $3x_0=y_0+2y_1$ and $3y_0=x_0+2x_1$.  Then from \eqref{exph} it follows that $\gamma_1=\gamma_0+y_0^2y_1-x_0^2x_1$ and $2(\gamma_1-\gamma_0)=(x_0-y_0)^3$. Moreover, there are exactly two distinct roots of $g'$, i.e.\@ $g'(x)=a(x-\gamma_0)^{t_0}(x-\gamma_1)^{t_1}$, where $t_0+t_1=t-1=\deg g-1$, $a\in K\setminus \{0\}$ and $t_0, t_1\geq 1$. Therefore,
\begin{align*}
 f'(x)=a (h(x)-\gamma_0)^{t_0}(h(x)-\gamma_1)^{t_1}h'(x)=3a (x-x_0)^{2t_0+1}(x-x_1)^{t_0}(x-y_0)^{2t_1+1}(x-y_1)^{t_1},
\end{align*}
and because of $3x_0=y_0+2y_1$ and $3y_0=x_0+2x_1$,  it follows that $x_0, x_1, y_0, y_1$ are all distinct. Namely, otherwise $f$ has no two distinct critical points. Thus, \eqref{poss21} holds, 

Assume henceforth that in \eqref{exph2} we have $k_0, k_1, l_0, l_1>1$. If $x_0=y_0$ and  $x_1=y_1$, or $x_0=y_1$ and $x_1=y_0$, then  $\gamma_0=\gamma_1$, a contradiction. If not, then $k_1=l_1=2$, $k_0=l_0=k-2$. If $k_0, l_0>2$, then $kx_1 -k_0x_1-k_1x_0=0$, $ky_1 -k_0y_1-k_1y_0=0$, $x_0=y_0$, and $x_1=y_1$,  $\gamma_0=\gamma_1$, a contradiction. If $k_1=l_1=k_0=l_0=2$,  there is also a possibility that $4x_0=2y_1+2y_0$, $4y_1=2x_1+2x_0$, $x_1=y_0$. Then $x_0=x_1=y_0=y_1$, and $\gamma_0=\gamma_1$, a contradiction.

\end{proof}
In the sequel, we discuss some aspects of Proposition~\ref{novo}.
If in Proposition~\ref{novo}, Equation~\eqref{poss11} holds, then
\[
f(x)=c_1\left((x-x_0)^{k_0}(x-x_1)^{k_1}\right)^{t}+c_0, \qquad c_0, c_1\in K, \ c_1\neq 0,
\]
for some distinct $x_0, x_1\in \overline{K}$, $k_0, k_1\geq 1$, $k_0+k_1=k\geq 3$ and $t\geq 2$. If $f(x)=g(h(x))$ with $h$ monic and $h(0)=0$ (which we can assume without loss of generality by \eqref{wlog2}), then $g(x)=c_1 (x-\gamma_0)^{t}+c_0$, $h(x)=(x-x_0)^{k_0}(x-x_1)^{k_1}+\gamma_0$ and $(-1)^{k-1} x_0^{k_0}x_1^{k_1}=\gamma_0$.

If \eqref{poss21} holds, and $f(x)=g(h(x))$ with $h$ monic and $h(0)=0$, then
\[
h(x)=(x-x_0)^2(x-x_1)+\gamma_0=(x-y_0)^2(x-y_1)+\gamma_1, \quad g'(x)=c_1(x-\gamma_0)^{t_0}(x-\gamma_1)^{t_1},
\]
for $c_1\neq 0$, $t_0, t_1\geq 1$ such that $t_0+t_1=\deg g-1$, distinct $x_0, x_1, y_0, y_1\in \overline{K}$ with $3x_0=y_0+2y_1$ and $3y_0=x_0+2x_1$, and distinct $\gamma_0, \gamma_1\in \overline{K}$ with $\gamma_0=x_0^2x_1$, $\gamma_1=y_0^2y_1$. Then also $2(\gamma_1-\gamma_0)=(x_0-y_0)^3$.

It is possible that $f$ has at least two distinct critical points, equal critical values at at most two distinct critical points, is not of forbidden types in Proposition~\ref{novo}, and can be represented as $f=g\circ h$ with $\deg g>1$ and $\deg h=2$. Indeed,
\[
f(x)=(1+x)^5-x^5=\left(5x^2+5x+1\right)\circ (x^2+x),
\]
 and $f$ has three simple critical points since $f'(x)=5(2x+1)(2x^2+2x+1)$, and the critical values are not all equal.
Moreover, one can show that 
\[
(1+x)^n-x^n=\tilde P_{n, n-1}(x)\circ (x^2+x), \quad  \tilde P_{n, n-1}(x):= \prod_{j=1}^{n'}\left((2-\omega_j-\overline{\omega_j})x+1\right), \ n=2n'+1.
\]
for all odd  $n\geq 3$, and $(1+x)^n-x^n$ has all simple critical points and equal critical values at at most two distinct critical points. This is shown in \cite{DK15} and recalled in Section~\ref{Impl}.

\section{Proofs of the main theorems}\label{dioph}

\begin{proof}[Proof of Theorem~\ref{DEM}]
If the equation $f(x)=g(y)$ has infinitely many solutions with a bounded $\mathcal{O}_S$-denominator, then by Theorem~\ref{T:BT} we have
\begin{align}\label{condition0}
f(x)=\phi(f_1(\lambda(x))), \quad g(x)=\phi(g_1(\mu(x))),
\end{align}
where $(f_1, g_1)$ is a standard or specific pair over $K$, $\phi, \lambda, \mu\in K[x]$ and $\deg \lambda=\deg \mu=1$. 

By assumption and Proposition~\ref{distroots} it follows that $\Mon(f)$ and $\Mon(g)$ are primitive permutation groups. Thus $f$ and $g$ are indecomposable. 

Assume that $h:=\deg \phi>1$.  Then $\deg f_1=1$ and $\deg g_1=1$, since $f$ and $g$ are indecomposable. From \eqref{condition0} it follows that $f(x)=g(\mu(x))$ for some $\mu\in K[x]$.

If $\deg \phi=1$, then from \eqref{condition0} it follows that
\begin{equation}\label{morse1}
f(x)=e_1f_1(c_1x+c_0)+e_0,\quad g(x)=e_1g_1(d_1x+d_0)+e_0,
\end{equation}
where $c_1,c_0, d_1, d_0, e_1, e_0\in K$, and $c_1d_1e_1\neq 0$.  Let $\deg f=\deg f_1=:k$ and $\deg g=\deg g_1=:l$. By assumption $k, l\geq 3$.

Note that $(f_1, g_1)$ cannot be a standard pair of the second kind, since $k, l>2$. 

 If $(f_1, g_1)$ is a standard pair of the fifth kind, then either  $f_1(x)=(ax^2-1)^3$ or $g_1(x)=(ax^2-1)^3$. By \eqref{morse1} it follows that either $f$ or $g$ are decomposable, a contradiction.

If $(f_1, g_1)$ is a standard pair of the first kind, then either $f_1(x)=x^{k}$ or $g_1(x)=x^{l}$. Since $f'$ and $g'$ have at least two distinct roots, we have a contradiction. 

If $(f_1, g_1)$ is a standard pair of the third or of the fourth kind, then 
\begin{align}\label{thirdfourth}
f(x)=e_2D_{k}(c_1x+c_0, \alpha)+e_0,\quad
g(x)=e_2'D_{l}(d_1x+d_0, \beta)+e_0, 
\end{align}
where $\gcd(k, l)\leq 2$ and $e_2, e_2', \alpha, \beta\in K\setminus\{0\}$. However, this cannot be. Namely,  since $k, l\geq 3$ and $\gcd(k, l)\leq 2$, it follows that either $k\geq 4$ or $l\geq 4$. Assume $k\geq 4$. By Proposition~\ref{distroots},  it follows that also when we consider $f$ as with coefficients in $\overline{K}$, the monodromy group of $f$ over $\overline{K}$ is doubly transitive. Then $(f(x)-f(y))/(x-y)$ is  irreducible over $\overline{K}$ by Lemma~\ref{doublyL}.  This is in contradiction with \eqref{factor} . We conclude analogously if $l\geq 4$.

If $(f_1, g_1)$ is a specific pair, then 
\begin{align}
f(x)=e_2D_{k}(\gamma_1x+\gamma_0, \alpha)+e_0,\quad
g(x)=-e_2D_{l}(\delta_1x+\delta_0, \beta)+e_0, 
\end{align}
for some $\gamma_1, \delta_1, \gamma_0, \delta_0\in \overline{K}$, $e_2, \alpha, \beta\in K\setminus\{0\}$, where  $\gcd(k, l)\geq 3$. This, by the same argument as above, cannot be unless $(k, l)=(3, 3)$. In this case, $\gcd(k, l)=3$, so $f_1(x)=D_3(x, a)=x^3-3xa$, $g_1(x)=-D_3(1/2x, a)=-1/8x^3+3/2xa$. Then $g_1(-2x)=f_1(x)$ and from \eqref{morse1} it follows that $g(\mu(x))=f(x)$ for some $\mu\in K[x]$.
\end{proof}

\begin{theorem}\label{DEM22}

Let $K$ be a number field, $S$ a finite set of places of $K$ that contains all Archimedean places, $\mathcal{O}_S$  the ring of $S$-integers of $K$ and $f,g\in K[x]$ with $\deg f\geq 3$, $\deg g\geq 3$ and $\deg f< \deg g$.

Assume that $f$ and $g$ both have at least two distinct critical points and equal critical values at at most two distinct critical points, and do not satisfy  \eqref{poss11} nor \eqref{poss21}. Then the equation $f(x)=g(y)$ has finitely many solutions with a bounded $\mathcal{O}_S$-denominator, unless either $(\deg f, \deg g)\in \{(3, 4), (3, 5), (4, 5), (4, 6)\}$, or $f$ is indecomposable and $g(x)=f(\nu(x))$ for some quadratic $\nu\in K[x]$.

If $(\deg f, \deg g)\in \{(3, 4), (3, 5), (4, 5), (4, 6)\}$, then the equation has infinitely many solutions with a bounded $\mathcal{O}_S$-denominator when
\[
f(x)=e_1f_1(c_1x+c_0)+e_0, \quad g(x)=e_1g_1(d_1x+d_0)+e_0,
\]
for some $c_1,c_0, d_1, d_0, e_1, e_0\in K$, and $c_1d_1e_1\neq 0$,  and 
\[
(f_1, g_1)\in \left\{(D_3(x, a^4), D_4(x, a^3)), \ (D_3(x, a^5), D_5(x, a^3)), \ (D_4(x, a^5), D_5(x, a^4))\right\}, 
\]
where $D_3(x, a)=x^3-3xa$, $D_4(x, a)=x^4-4x^2a+2a^2$ and $D_5(x, a)=x^5-5ax^3+5a^2x$ are Dickson polynomials, or $f_1(x)=3x^4-4x^3$ and $g_1(x)=(ax^2-1)^3$ for some nonzero $a\in K$. 
\end{theorem}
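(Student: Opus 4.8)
The statement of Theorem~\ref{DEM22} is really two assertions, and I would treat them separately. The first part—classifying when there are infinitely many solutions under the stated hypotheses—follows the same template as the proof of Theorem~\ref{DEM}, but now $f$ and $g$ may be decomposable (since we only forbid three equal critical values, not two), so the indecomposability shortcut is no longer available and the Dickson possibilities survive for small degrees. The second part is a verification: given a listed pair $(f_1,g_1)$, exhibit that $f_1(x)=g_1(y)$ has infinitely many solutions with a bounded denominator, which by Theorem~\ref{T:BT} reduces to recognizing each as a standard pair of the third, fourth, or fifth kind.

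\textbf{Step 1: Apply the finiteness criterion.} Suppose $f(x)=g(y)$ has infinitely many solutions with a bounded $\mathcal{O}_S$-denominator. By Theorem~\ref{T:BT} we may write $f(x)=\phi(f_1(\lambda(x)))$ and $g(x)=\phi(g_1(\mu(x)))$ with $\lambda,\mu$ linear, $\phi\in K[x]$, and $(f_1,g_1)$ a standard or specific pair whose associated equation has infinitely many such solutions. I first analyze $\deg\phi=:h$. If $h>1$, then $\phi$ is a genuine nonlinear right-composition factor of both $f$ and $g$; by Lemma~\ref{delta} applied to $g=\phi\circ(g_1\circ\mu)$, every root of $(g_1\circ\mu)(x)-\gamma_0$ (for $\gamma_0$ a root of $\phi'$) is a critical point of $g$ with critical value $\phi(\gamma_0)$. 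Since $g$ has at least two distinct critical points and equal critical values at at most two of them, and since $f,g$ are assumed not to satisfy \eqref{poss11} nor \eqref{poss21}, Proposition~\ref{novo} forces $\deg(g_1\circ\mu)=\deg g_1\le 2$, and likewise $\deg f_1\le 2$. Inspecting the standard/specific pairs, the only ones with both entries of degree $\le 2$ give $\deg f_1=\deg g_1=2$, whence $f=\phi\circ(\text{quadratic})$ and $g=\phi\circ(\text{quadratic})$; eliminating the common $\phi$ yields $g(x)=f(\nu(x))$ with $\nu$ quadratic and $f$ indecomposable—the exceptional case in the statement. (One must also confirm $f$ is indecomposable here, which follows because $\phi$ itself must be indecomposable or can be taken so after absorbing factors into the quadratics; this is the bookkeeping to watch.)

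\textbf{Step 2: The case $\deg\phi=1$.} Here $f$ and $g$ are themselves linearly equivalent to $f_1$ and $g_1$, so $\deg f=\deg f_1=:k$ and $\deg g=\deg g_1=:l$ with $3\le k<l$. I run through the five kinds plus the specific pairs. The second kind is excluded since $k\ge 3$. For the first and fifth kinds, one entry is $x^m$ or $(ax^2-1)^3$ or $3x^4-4x^3$; the pure power $x^m$ and $(ax^2-1)^3$ are incompatible with having at least two distinct critical points and not satisfying \eqref{poss11} (as $x^m$ has a single critical point and $(ax^2-1)^3$ is of the forbidden form after a linear change), so the fifth kind survives only through $f_1=3x^4-4x^3$, $g_1=(ax^2-1)^3$, giving $(k,l)=(4,6)$. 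For the third, fourth, and specific kinds, both $f$ and $g$ are linearly related to Dickson polynomials $D_k$ and $D_l$; by \eqref{factor}, $D_n(x,a)-D_n(y,a)$ factors nontrivially for every $n\ge 3$, but the hypothesis that $f,g$ have \emph{at most two} equal critical values only bounds things through Corollary~\ref{critdick}: $D_n$ has three distinct critical points with equal critical values once $n\ge 6$, so the Dickson alternative is compatible with the at-most-two-equal-critical-values hypothesis only when $k,l\le 5$. Combined with $\gcd(k,l)\le 2$ (third/fourth) or $\gcd(k,l)\ge 3$ (specific) and $3\le k<l\le 5$, the admissible degree pairs are exactly $(3,4),(3,5),(4,5)$, matching the statement together with $(4,6)$ from the fifth kind.

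\textbf{Step 3: The converse verification.} For the listed degree pairs I must show the displayed $(f_1,g_1)$ do produce infinitely many solutions. This is immediate from Theorem~\ref{T:BT}: each pair $(D_3(x,a^4),D_4(x,a^3))$, $(D_3(x,a^5),D_5(x,a^3))$, $(D_4(x,a^5),D_5(x,a^4))$ is a standard pair of the third kind (indeed $(D_m(x,a^n),D_n(x,a^m))$ with $\gcd(m,n)=1$), and $(3x^4-4x^3,(ax^2-1)^3)$ is the fifth kind; each such standard pair's equation is known to have infinitely many solutions with bounded denominator. The explicit forms $D_3=x^3-3xa$, $D_4=x^4-4x^2a+2a^2$, $D_5=x^5-5ax^3+5a^2x$ come from specializing \eqref{expdick}. \textbf{The main obstacle} I anticipate is Step~1's rigorous exclusion of $\deg\phi>1$ with nonquadratic inner factors: one must carefully invoke Proposition~\ref{novo} to rule out that $g_1\circ\mu$ (equivalently $f$'s nonlinear composition structure) falls into the forbidden families \eqref{poss11}/\eqref{poss21} versus being genuinely quadratic, and then confirm that the surviving configuration is precisely $g(x)=f(\nu(x))$ with $f$ indecomposable and $\deg\nu=2$. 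The degree inventory of standard pairs in Step~2 is routine once Corollary~\ref{critdick} pins down the $n\le 5$ ceiling for the Dickson cases.
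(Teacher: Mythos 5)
Your overall skeleton (apply Theorem~\ref{T:BT}, split on $\deg\phi$, control decompositions with Proposition~\ref{novo}, and cap the Dickson degrees with Corollary~\ref{critdick}) is the same as the paper's, but your Step~1 contains a genuine error. After Proposition~\ref{novo} forces $\deg f_1\le 2$ and $\deg g_1\le 2$, you claim the only surviving configuration has $\deg f_1=\deg g_1=2$, and you then ``eliminate the common $\phi$'' to get $g=f\circ\nu$ with $\nu$ quadratic. This fails twice over. First, $\deg f_1=\deg g_1=2$ would give $\deg f=\deg\phi\cdot\deg f_1=\deg\phi\cdot\deg g_1=\deg g$, contradicting the standing hypothesis $\deg f<\deg g$, which your Step~1 never invokes; moreover, pairs with degrees $(1,2)$ do exist (first kind with $m=1$, i.e.\ $(x,\,ap(x))$), so your inventory of pairs is also wrong. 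Second, from $f=\phi\circ q_1$ and $g=\phi\circ q_2$ with both $q_i$ quadratic one cannot conclude $g=f\circ\nu$ with $\nu$ quadratic: that would force $\deg g=2\deg f$, incompatible with $\deg f=\deg g$ in that scenario. The paper's route is: $\deg f<\deg g$ forces $\deg f_1=1$ and $\deg g_1=2$; then $\phi=f\circ(\text{linear})$, hence $g=\phi\circ g_1\circ\mu=f\circ\nu$ with $\deg\nu=2$; and $\phi$ (hence $f$) is indecomposable because any nontrivial decomposition of $\phi$ would exhibit $g$ as a composition with inner factor of degree $\ge 4$, contradicting Proposition~\ref{novo}. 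Your closing remark flags exactly this point as ``bookkeeping to watch,'' but the sketch you give of it does not work.

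There is a second gap in Step~2: your treatment of the fifth kind is self-contradictory and rests on a false claim. You assert that $(ax^2-1)^3$ ``is of the forbidden form'' \eqref{poss11}; it is not. Its derivative has root multiplicities $1,2,2$, and matching \eqref{poss11} would require $k_0t-1=k_1t-1=2$ with $k_0+k_1=k\ge 3$, which forces $t=3$, $k_0=k_1=1$, $k=2$, violating $k\ge3$. Had your claim been true, the fifth kind would be excluded entirely, since \emph{both} orderings of a fifth-kind pair involve $(ax^2-1)^3$; yet you retain $(k,l)=(4,6)$ anyway. The paper instead correctly observes that the fifth-kind pair satisfies all the hypotheses, which is precisely why $(4,6)$ and $(f_1,g_1)=(3x^4-4x^3,(ax^2-1)^3)$ must appear in the exceptional list. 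The remaining parts of your Step~2 (exclusion of the first and second kinds, the ceiling $k,l\le 5$ for Dickson pairs via Corollary~\ref{critdick}, the resulting list $(3,4),(3,5),(4,5)$, and the exclusion of specific pairs by $\gcd(k,l)\ge3$ versus $k<l\le5$) and your Step~3 agree with the paper.
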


\begin{proof}
If the equation $f(x)=g(y)$ has infinitely many solutions with a bounded $\mathcal{O}_S$-denominator, then
\begin{align}\label{condition3}
f(x)=\phi(f_1(\lambda(x))), \quad g(x)=\phi(g_1(\mu(x))),
\end{align}
where $(f_1, g_1)$ is a standard or specific pair over $K$, $\phi, \lambda, \mu\in K[x]$ and $\deg \lambda=\deg \mu=1$. 

Assume $\deg \phi>1$. Since $f$ and $g$ are such that neither \eqref{poss11} nor \eqref{poss21} holds, by Proposition~\ref{novo} if follows that $\deg f_1\leq 2$ and $\deg g_1\leq 2$. 

Since $\deg f< \deg g$, it follows that $\deg f_1=1$ and  $\deg g_1=2$. Since $\deg g_1=2$, by Proposition~\ref{novo} it further follows that $\phi$ is indecomposable. 
Then from \eqref{condition3} it follows that $f(\nu_1(x))=\phi(x)$ for some $\nu_1\in K[x]$ with $\deg \nu_1=1$, and then $g(x)=f(\nu(x))$ for some $\nu\in K[x]$ with $\deg \nu=2$.

Assume further that $\deg \phi=1$. If so, then from \eqref{condition3} it follows that
\begin{equation}\label{morse2}
f(x)=e_1f_1(c_1x+c_0)+e_0,\quad g(x)=e_1g_1(d_1x+d_0)+e_0,
\end{equation}
where $c_1,c_0, d_1, d_0, e_1, e_0\in K$, and $c_1d_1e_1\neq 0$.  Let $\deg f=\deg f_1=:k$ and $\deg g=\deg g_1=:l$. By assumption $l>k\geq 3$. Note that since $f$ and $g$ both have at least two distinct critical points and equal critical values at at most two distinct critical points, by \eqref{morse2} the same holds for $f_1$ and $g_1$. 

Note that $(f_1, g_1)$ cannot be a standard pair of the second kind, since $k, l>2$. 

If $(f_1, g_1)$ is a standard pair of the fifth kind, then $g_1(x)=(ax^2-1)^3$ and $f_1(x)=3x^4-4x^3$. In this case, all the assumptions are satisfied and the equation $f(x)=g(y)$ has infinitely many solutions with a bounded $\mathcal{O}_S$-denominator.

If $(f_1, g_1)$ is a standard pair of the first kind,  then either $f_1(x)=x^{k}$ or $g_1(x)=x^{l}$, so either $f_1$ or $g_1$ do not have two distinct critical points, a contradiction.

If $(f_1, g_1)$ is a standard pair of the third or fourth kind, then
\begin{align}\label{thirdfourth2}
f(x)=e_2D_{k}(c_1x+c_0, \alpha)+e_0,\quad
g(x)=e_2'D_{l}(d_1x+d_0, \beta)+e_0, 
\end{align}
where $\gcd(k, l)\leq 2$ and $e_2, e_2', \alpha, \beta\in K\setminus\{0\}$. 
If either $k\geq 6$ or $l\geq 6$, this cannot be, since $D_k(x, a)$ with $k\geq 6$ has equal critical values at three distinct critical points, by Corollary~\ref{critdick}. If $k, l< 6$, since $\gcd(k, l)\leq 2$ and $k<l$ it follows that $(k, l)\in \{(3, 4), (3, 5), (4, 5)\}$. 

Recall that $D_k(x, \alpha)$ has all simple critical points, so in particular has at least two distinct critical points, when $k, l\geq 3$. Moreover, one easily sees that $D_k(x, \alpha)$ has equal critical values at at most two distinct critical points for $k\leq 5$.
So, under the assumptions of the theorem we have infinitely many solutions with a bounded $\mathcal{O}_S$-denominator to the equation $f(x)=g(y)$ also when  $f(x)=e_1f_1(c_1x+c_0)+e_0$, $g(x)=e_1g_1(d_1x+d_0)+e_0$ and 
\[
(f_1, g_1)\in \left\{(D_3(x, a^4), D_4(x, a^3)), \ (D_3(x, a^5), D_5(x, a^3)), \ (D_4(x, a^5), D_5(x, a^4))\right\}, 
\]
where $D_3(x, a)=x^3-3xa$, $D_4(x, a)=x^4-4x^2a+2a^2$, $D_5(x, a)=x^5-5ax^3+5a^2x$.

If $(f_1, g_1)$ is a specific pair, then $f_1(x)=e_2D_k(\gamma_1x, \alpha)$, $g_1(x)=e_2D_l(\gamma_2x, \beta)$ for some $\gamma_1, \gamma_2\in \overline{K}$ with $\gcd(k, l)\geq 3$. If either $k\geq 6$ or $l\geq 6$, this cannot be by the same argument as above (since $D_k(x, a)$ with $k\geq 6$ has at least three critical points with equal critical values by Corollary~\ref{critdick}). The case $(k, l)=(3, 3)$ is impossible, since $k<l$.
\end{proof}

The following result is a corollary of Proposition~\ref{novo} and Theorem~\ref{DEM22}. 

\begin{corollary}\label{DEMc}
Let $K$ be a number field, $S$ a finite set of places of $K$ that contains all Archimedean places, $\mathcal{O}_S$  the ring of $S$-integers of $K$ and $f,g\in K[x]$ with $\deg f\geq 3$, $\deg g\geq 3$ and $\deg f< \deg g$.

If $f$ and $g$ have at least three simple critical points and equal critical values at at most two distinct critical points, then the equation $f(x)=g(y)$ has finitely many solutions with a bounded $\mathcal{O}_S$-denominator, unless  $(\deg f, \deg g)=(4, 5)$, or $f$ is indecomposable and $g(x)=f(\nu(x))$ for some quadratic $\nu\in K[x]$.

If $(\deg f, \deg g)=(4, 5)$, then the equation $f(x)=g(y)$ has infinitely many solutions with a bounded $\mathcal{O}_S$-denominator also when
\[
f(x)=e_1D_4(c_1x+c_0, a^5)+e_0, \quad g(x)=e_1D_5(d_1x+d_0, a^4)+e_0
\]
 for some $a, c_1,c_0, d_1, d_0, e_1, e_0\in K$, and $ac_1d_1e_1\neq 0$.
\end{corollary}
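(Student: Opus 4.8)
The plan is to derive Corollary~\ref{DEMc} from Theorem~\ref{DEM22} by showing that the strengthened hypothesis---that $f$ and $g$ each have at least three simple critical points---both puts us inside the scope of Theorem~\ref{DEM22} and collapses its list of exceptional degree pairs down to $(4,5)$. First I would record two free consequences of the hypothesis: having three simple critical points implies having at least two distinct critical points, so that part of the hypothesis of Theorem~\ref{DEM22} is automatic; and three distinct roots of $f'$ force $\deg f'\geq 3$, hence $\deg f\geq 4$. Combined with $\deg f<\deg g$, this already discards the pairs $(3,4)$ and $(3,5)$ from the exceptional list.

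Next I would check that neither $f$ nor $g$ can satisfy \eqref{poss11} or \eqref{poss21}, which is what licenses the application of Theorem~\ref{DEM22}. For a polynomial of type \eqref{poss11}, the roots of the derivative are $x_0$ of multiplicity $k_0t-1$, $x_1$ of multiplicity $k_1t-1$, and the (always distinct) simple root of the linear factor; for all three to be simple one would need $k_0t-1=k_1t-1=1$, forcing $t=2$ and $k_0=k_1=1$, hence $k=k_0+k_1=2$, contradicting $k\geq 3$. Thus a polynomial of type \eqref{poss11} has at most two simple critical points. For type \eqref{poss21} the exponents $2t_0+1$ and $2t_1+1$ are at least $3$, so $x_0$ and $y_0$ can never be simple; only $x_1$ and $y_1$ can, giving again at most two simple critical points. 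Hence the hypothesis of three simple critical points excludes both forbidden forms for $f$ and for $g$, and Theorem~\ref{DEM22} applies.

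After discarding $(3,4)$ and $(3,5)$, Theorem~\ref{DEM22} leaves $(\deg f,\deg g)\in\{(4,5),(4,6)\}$, or the case that $f$ is indecomposable with $g=f(\nu)$ for quadratic $\nu$. The crux is eliminating $(4,6)$, and here I would reach into the internal analysis of the proof of Theorem~\ref{DEM22} rather than use it as a black box: among the standard and specific pairs, the degree pair $(4,6)$ arises only from the fifth-kind pair, that is, when $f$ is linearly related to $3x^4-4x^3$ and $g$ is linearly related to $(ax^2-1)^3$. But $(3x^4-4x^3)'=12x^2(x-1)$ has a double root at $0$, so $f$ would have only one simple critical point, and $((ax^2-1)^3)'=6ax(ax^2-1)^2$ has double roots at $\pm a^{-1/2}$, so $g$ would likewise have a single simple critical point. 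Either conclusion contradicts the hypothesis, so $(4,6)$ cannot occur and only $(4,5)$ survives. I expect this bookkeeping step to be the main obstacle, precisely because it relies on the decomposition structure uncovered in the proof of Theorem~\ref{DEM22}.

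Finally, for the displayed claim about $(4,5)$ I would invoke the explicit $(4,5)$ entry of Theorem~\ref{DEM22}, namely $(f_1,g_1)=(D_4(x,a^5),D_5(x,a^4))$. Since $D_4(x,\alpha)$ and $D_5(x,\alpha)$ with $\alpha\neq 0$ have only simple critical points---hence exactly $3$ and $4$ of them, respectively---and, by Corollary~\ref{critdick}, have equal critical values at at most two distinct critical points (as $4,5<6$), the polynomials $f=e_1D_4(c_1x+c_0,a^5)+e_0$ and $g=e_1D_5(d_1x+d_0,a^4)+e_0$ satisfy the hypotheses of the corollary. By Theorem~\ref{DEM22} the equation $f(x)=g(y)$ then has infinitely many solutions with bounded $\mathcal{O}_S$-denominator, which establishes the final assertion.
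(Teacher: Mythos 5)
Your proof is correct and takes essentially the same route the paper intends: the paper gives no separate argument for Corollary~\ref{DEMc}, stating only that it follows from Proposition~\ref{novo} and Theorem~\ref{DEM22}, and your write-up fills in exactly that derivation. In particular, your three bookkeeping steps — that three simple critical points exclude the forms \eqref{poss11} and \eqref{poss21} so Theorem~\ref{DEM22} applies, that $\deg f\geq 4$ discards $(3,4)$ and $(3,5)$, and that the fifth-kind pair $(3x^4-4x^3,(ax^2-1)^3)$ behind $(4,6)$ has only one simple critical point on each side — are precisely the checks the paper's phrasing leaves to the reader.
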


From Theorem~\ref{DEM22} and Corollary~\ref{DEMc},  Theorem~\ref{DEM2} follows immediately.

The case $\deg f=\deg g$ in Theorem~\ref{DEM2} is somewhat harder to handle. Namely, in the proof of Theorem~\ref{DEM22} we used that $\deg f<\deg g$ when we concluded that if $f(x)=\phi(f_1(\lambda(x)))$ and $g(x)=\phi(g_1(\mu(x)))$ with $\deg \phi>1$, then $\deg f_1=1$ and $\deg g_1=2$ by Proposition~\ref{novo}. If we had allowed $\deg f=\deg g$, then we would have also had the possibility $\deg f_1=1$ and $\deg g_1=1$, which is easy to handle, and the possibility $\deg f_1=2$ and $\deg g_1=2$. In the latter case, we couldn't express easily the relation between $f$ and $g$.

In the sequel we discuss how one can show that for $f, g\in K[x]$, where $f$ is indecomposable, there does not exist quadratic $\nu\in K[x]$   such that $g(x)=f(\nu(x))$.
One may first find if there exists $a\in K$ such that
$g(x)=g_1(x^2+ax)$ for some $g_1\in K[x]$ (as in \eqref{wlog2}). 
If $\deg g=m$ and $g(x)=a_mx^m+a_{m-1}x^{m-1}+\cdots$,  then $a_m m a=a_{m-1}$, which determines $a$. Then $g_1$, if it exists, is uniquely determined by $g$ and $a$. 

If $g(x)=g_1(x^2+ax)$ for some decomposable $g_1\in K[x]$, then it is not possible that $g(x)=f(\nu(x))$ for some indecomposable $f$ and quadratic $\nu$. Namely, by Ritt's~\cite{R22} and  Engstrom's~\cite{E41} results (see also \cite[Cor.~2.12]{ZM}), it follows that any representation of $g$, which has coefficients in a field of characteristic zero, as a composition of indecomposable polynomials, consists of the same number of factors. 
If $g(x)=g_1(x^2+ax)$ for some indecomposable $g_1\in K[x]$,  and $g(x)=f(\nu(x))$ for some indecomposable $f$ and quadratic $\nu$, then  by Ritt's and Engstrom's results (see \cite[Cor.~2.9]{ZM}), we have that 
\[
\textnormal{$f=g_1\circ \mu_1$  and $\nu=\mu_1^{\langle -1 \rangle}\circ (x^2+ax)$ for some linear $\mu_1\in K[x]$},
\]
since $g=g_1(x^2+ax)=f(\nu(x))$,  all factors are indecomposable and $\deg g_1=\deg f$. Now one can also compare the roots of $f$ and $g_1$ to reach contradiction.  We will later illustrate this approach on a concrete example (see Theorem~\ref{DK2}).

\section{Corollaries of the main theorems}\label{Impl}


We now present several corollaries of Theorem~\ref{DEM} and Theorem~\ref{DEM2}. 
Most of these corollaries are results of published papers \cite{BST99, DK15, DT01, KS08, PPS11, T03}.  In most cases, our proofs of Theorem~\ref{DEM} and Theorem~\ref{DEM2} are shorter  than the proofs in those papers. Also, those proofs depend on particular properties of the involved polynomials, such as their coefficients.

We first list some corollaries of Theorem~\ref{DEM}.

As we have seen in the introduction, Theorem~\ref{DEM} implies immediately Corollary~\ref{tri}, which generalizes the main of result of P\'eter, Pint\'er and Schinzel~\cite[Thm.~1]{PPS11}. They proved it using other tools: Haj\'{o}s lemma on the multiplicites of roots of lacunary polynomials (see \cite[p.~187]{S00}), a result of Fried and Schinzel~\cite{FS72} about indecomposability of  polynomials in \eqref{stan}, and by comparison of coefficients.


Theorem~\ref{DEM} implies the finiteness of integral solutions to the equation $x^{n}+x^{n-1}+\cdots+x+1=y^{m}+y^{m-1}+\cdots+y+1$, with $m> n\geq 3$. This result was shown by Davenport, Lewis and Schinzel~\cite{DLS61}, by a finiteness criterion developed by them, which is weaker than the later one of Bilu and Tichy~\cite{BT00}.

For positive integers $k \leq n-1$ put
\begin{equation}\label{TBE}
P_{n, k}(x):=\sum_{j=0}^k {n \choose j} x^j={n \choose 0}+{n \choose 1}x + {n \choose 2} x^2+\cdots+{n \choose k} x^{k}.
\end{equation}
The polynomial $P_{n, k}$ is said to be a {\it truncated binomial expansion \textup{(}polynomial\textup{)}} at the 
$k$-th stage. 


\begin{corollary}\label{DK}
 Let $n, k, m, l \in \N$ be such that $3\leq k\leq n-1, 3\leq l\leq m-1$ and $k\neq l$. 
If $P_{n-1,k-1}$
and $P_{m-1,l-1}$ are such that they have no two distinct roots whose quotient is a $k$-th, respectively $l$-th, root of unity, then the equation $P_{n,k}(x)=P_{m,l}(y)$ has only finitely many integral solutions $x, y$.
\end{corollary}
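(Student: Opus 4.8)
The plan is to apply Theorem~\ref{DEM} to the pair $f = P_{n,k}$ and $g = P_{m,l}$. By Theorem~\ref{DEM}, since $\deg f = k \geq 3$ and $\deg g = l \geq 3$, it suffices to verify two things: first, that both $f$ and $g$ have at least two distinct critical points and all distinct critical values; and second, that $f(x) = g(\mu(x))$ is impossible for any linear $\mu$ (which will follow from $\deg f = k \neq l = \deg g$). The second point is immediate from the degree hypothesis $k \neq l$, so the entire content of the proof is the critical-values analysis of truncated binomials.

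**First I would compute the derivative.** The key observation is that differentiating $P_{n,k}(x) = \sum_{j=0}^k \binom{n}{j} x^j$ term by term and using $j\binom{n}{j} = n\binom{n-1}{j-1}$ gives
\begin{equation*}
P_{n,k}'(x) = \sum_{j=1}^k j\binom{n}{j} x^{j-1} = n\sum_{j=1}^k \binom{n-1}{j-1} x^{j-1} = n\,P_{n-1,k-1}(x).
\end{equation*}
Thus the critical points of $P_{n,k}$ are exactly the roots of $P_{n-1,k-1}$, a polynomial of degree $k-1 \geq 2$; so $f$ has at least two distinct critical points provided $P_{n-1,k-1}$ is not a constant times a power of a linear factor. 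I would need to confirm this is guaranteed (or handled) by the hypothesis; the stated condition about roots whose quotient is a root of unity should rule out the degenerate repeated-root case as well.

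**The main obstacle is establishing that the critical values are all distinct**, and this is where the root-quotient hypothesis enters. Suppose $\alpha \neq \beta$ are two critical points (roots of $P_{n-1,k-1}$) with $P_{n,k}(\alpha) = P_{n,k}(\beta)$. The strategy I would pursue, mirroring the trinomial and geometric-series computations sketched in the introduction, is to find an algebraic identity expressing $P_{n,k}$ in terms of $P_{n-1,k-1}$ that collapses at critical points. One expects a relation of the shape $c_1 P_{n,k}(x) + c_2(x)\,P_{n-1,k-1}(x) = (\text{monomial or simple closed form})$, analogous to how $xf'(x) = n_1(f(x)-a_3) + \cdots$ was used for trinomials and $f(x) + (x-1)f'(x) = (n+1)x^n$ for the geometric series. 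Evaluating such an identity at $\alpha$ and $\beta$ (where $P_{n-1,k-1}$ vanishes) would force a relation between $\alpha$ and $\beta$ — concretely, one anticipates deriving that $\alpha^k = \beta^k$, i.e.\ $\alpha/\beta$ is a $k$-th root of unity, contradicting the hypothesis that $P_{n-1,k-1}$ has no two distinct roots whose quotient is a $k$-th root of unity. This is precisely why the hypothesis is phrased in terms of $P_{n-1,k-1}$ and $k$-th roots of unity rather than in terms of $P_{n,k}$ directly.

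**I would finish** by applying the same argument symmetrically to $g = P_{m,l}$ using its root-quotient hypothesis, concluding that both polynomials satisfy the assumptions of Theorem~\ref{DEM}. Since $k \neq l$ forbids the exceptional relation $f = g\circ\mu$, Theorem~\ref{DEM} yields that $P_{n,k}(x) = P_{m,l}(y)$ has only finitely many solutions with bounded $\mathcal{O}_S$-denominator over any number field, and in particular only finitely many integral solutions over $\Q$. The delicate point to get right will be the exact algebraic identity linking $P_{n,k}$ and its derivative, and confirming that the extracted condition $\alpha^k = \beta^k$ is exactly equivalent to $\alpha/\beta$ being a $k$-th root of unity under the constraint $P_{n-1,k-1}(\alpha) = P_{n-1,k-1}(\beta) = 0$.
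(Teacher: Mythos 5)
Your overall route is exactly the paper's: apply Theorem~\ref{DEM} to $f=P_{n,k}$, $g=P_{m,l}$, use $P_{n,k}'(x)=nP_{n-1,k-1}(x)$, and rule out $f=g\circ\mu$ by $k\neq l$. But as written the proof has a hole at its center: the identity you ``expect'' is never produced, and your whole critical-value argument is conditional on it. The identity does exist and is precisely the paper's key formula \eqref{truncform}: Pascal's rule gives
\[
P_{n,k}(x)-(x+1)\,\frac{P_{n,k}'(x)}{n}=\binom{n-1}{k}x^k .
\]
If $\alpha\neq\beta$ are critical points with $P_{n,k}(\alpha)=P_{n,k}(\beta)$, evaluating this at $\alpha$ and $\beta$ gives $\binom{n-1}{k}\alpha^k=\binom{n-1}{k}\beta^k$; since $k\leq n-1$ guarantees $\binom{n-1}{k}\neq 0$, and $\alpha,\beta\neq 0$ because $P_{n-1,k-1}(0)=1$, the quotient $\alpha/\beta$ is a $k$-th root of unity, contradicting the hypothesis. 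So the shape you anticipated is correct, but the missing identity \emph{is} the proof; without it you have a plan, not an argument.

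Second, your treatment of ``at least two distinct critical points'' contains an actual error: you suggest the root-quotient hypothesis ``should rule out the degenerate repeated-root case as well.'' It cannot. If $P_{n-1,k-1}$ were $c(x-\alpha)^{k-1}$, it would have a single distinct root, and the hypothesis about \emph{two distinct} roots would be vacuously satisfied, so it gives you nothing here. The degenerate case must be excluded separately, e.g.\ by the same identity one level down, $P_{n-1,k-1}(x)-(x+1)P_{n-1,k-1}'(x)/(n-1)=\binom{n-2}{k-1}x^{k-1}$: a multiple root $\alpha$ of $P_{n-1,k-1}$ would force $\binom{n-2}{k-1}\alpha^{k-1}=0$, hence $\alpha=0$ (note $k-1\leq n-2$), which is impossible since $P_{n-1,k-1}(0)=1$. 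Thus all roots of $P_{n-1,k-1}$ are simple, and since its degree is $k-1\geq 2$ the polynomial $P_{n,k}$ has at least two distinct critical points. With these two repairs your argument coincides with the paper's proof.
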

\begin{proof}
Since,
\begin{equation}\label{truncform}
P_{n, k}'(x)=nP_{n-1, k-1}(x) \quad \& \quad P_{n, k}(x)-(x+1)\frac{P_{n, k}'(x)}{n}={n-1 \choose k}x^k,
\end{equation}
it follows that $P_{n, k}$ has all distinct critical points and all distinct critical values, unless it has two critical points whose quotient is a $k$-th root of unity. Thus, the statement follows by Theorem~\ref{DEM}.
\end{proof}

In \cite{DK15}, Dubickas and Kreso studied the equation $P_{n,k}(x)=P_{m, l}(y)$ from  Corollary~\ref{DK}. They showed that this equation has only finitely many integral solutions when $2\leq k\leq n-1, 2\leq l\leq m-1$, and $k\neq l$, by assuming irreducibility of $P_{n-1, k-1}$ and $P_{m-1, l-1}$. Irreducibility of truncated binomial expansions has been studied by several authors, and the results suggest that $P_{n,k}$ is irreducible for all $k<n-1$. 
The existence of two distinct roots of $P_{n-1,k-1}$ whose quotient is a $k$-th root of unity is an open problem when $k<n-1$. Computations show that for $n\leq 100$ and $k<n-1$ no such two roots exist. The problem is solved in the case $k=n-1$ in \cite{DK15}. We will discuss this case later, when we will list some corollaries of Theorem~\ref{DEM2}.



\begin{corollary}\label{cKS}
For $m>n\geq 3$, the equation 
\begin{equation}\label{kseq}
\frac{x^n}{n!}+\cdots+\frac{x^2}{2!}+x+1=\frac{y^m}{m!}+\cdots+\frac{y^2}{2!}+y+1,
\end{equation}
has only finitely many integral solutions.
\end{corollary}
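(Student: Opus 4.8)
The plan is to deduce the corollary from Theorem~\ref{DEM} applied over $K=\Q$, with $S$ the single Archimedean place and $\mathcal{O}_S=\Z$. Write $f(x)=\sum_{j=0}^{n}x^{j}/j!$ and $g(y)=\sum_{j=0}^{m}y^{j}/j!$. Since integral solutions have denominator $1$, they are in particular solutions with a bounded $\Z$-denominator, so it suffices to verify that $f$ and $g$ satisfy the hypotheses of Theorem~\ref{DEM} and that $f(x)=g(\mu(x))$ is impossible for linear $\mu$. The latter is immediate from $\deg f=n\neq m=\deg g$. Hence everything reduces to showing that each of $f,g$ has at least two distinct critical points and all distinct critical values.

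First I would record the identity $f(x)-f'(x)=x^{n}/n!$, obtained by termwise differentiation, and note that $f'(x)=\sum_{j=0}^{n-1}x^{j}/j!$ is again a truncated exponential. If $\alpha$ were a multiple root of $f'$ then $f'(\alpha)=f''(\alpha)=0$, and $f'(x)-f''(x)=x^{n-1}/(n-1)!$ would force $\alpha=0$, contradicting $f'(0)=1$; thus $f'$ has $n-1\geq 2$ simple roots, so $f$ has $n-1$ distinct simple critical points. Evaluating $f(x)-f'(x)=x^{n}/n!$ at a critical point $\alpha$ gives $f(\alpha)=\alpha^{n}/n!$, so two distinct critical points $\alpha\neq\beta$ share a critical value if and only if $\alpha^{n}=\beta^{n}$.

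The heart of the matter, and the step I expect to be the main obstacle, is therefore to show that $\alpha\mapsto\alpha^{n}$ is injective on the roots of $f'$; this is precisely the point left as a hypothesis in the analogous Corollary~\ref{DK} for truncated binomials. I would argue via the Galois action. Let $L$ be the splitting field of $f'$ over $\Q$ and $G=\Gal(L/\Q)$ acting on the roots of $f'$. The assignment $\alpha\mapsto\alpha^{n}$ is $G$-equivariant, so its fibres form a $G$-invariant partition of the roots, i.e.\ a block system. By Schur's classical theorem the Galois group of the truncated exponential $f'$ of degree $n-1$ contains $A_{n-1}$, hence $G$ acts primitively, and the only block systems are the trivial ones. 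Either every block is a singleton, which is the injectivity we want, or there is a single block, meaning all $n-1$ roots of $f'$ share one common $n$-th power $c$. In the latter case $(n-1)!\,f'(x)=\prod_i(x-\alpha_i)$ would divide $x^{n}-c$ and hence equal $(x^{n}-c)/(x-\gamma)=\sum_{k=0}^{n-1}\gamma^{\,n-1-k}x^{k}$ for the one omitted root $\gamma$ with $\gamma^{n}=c$; comparing this geometric progression of coefficients with the coefficients $(n-1)!/k!$ of $(n-1)!\,f'(x)$ forces $\gamma=n-1$ from the $x^{n-2}$ term and then $(n-1)(n-2)=(n-1)^{2}$ from the $x^{n-3}$ term (the constant term when $n=3$), a contradiction. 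Thus $\alpha\mapsto\alpha^{n}$ is injective and $f$ has all distinct critical values; the identical argument applies to $g$.

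With both $f$ and $g$ of degree $\geq 3$, each having at least two distinct critical points and all distinct critical values, Theorem~\ref{DEM} shows that $f(x)=g(y)$ has infinitely many solutions with a bounded $\Z$-denominator only if $f(x)=g(\mu(x))$ for some linear $\mu$, which is impossible since $n\neq m$. Hence $f(x)=g(y)$ has only finitely many solutions with a bounded $\Z$-denominator, and \emph{a fortiori} only finitely many integral solutions. Only the injectivity of $\alpha\mapsto\alpha^{n}$ on the critical points is genuinely delicate; the remaining steps are routine verification.
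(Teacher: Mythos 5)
Your proof is correct and follows essentially the same route as the paper: it verifies the hypotheses of Theorem~\ref{DEM} via the identity $f(x)=f'(x)+x^n/n!$ (giving simple critical points and the fact that critical values are determined by $n$-th powers of critical points), and then uses Schur's classical theorem that the Galois group of a truncated exponential is symmetric or alternating. The only difference is that the paper delegates the key injectivity of $\alpha\mapsto\alpha^n$ on the roots of $f'$ to the cited work of Kulkarni and Sury \cite{KS08}, whereas you supply a self-contained argument for it (the fibres form a block system, primitivity forces it to be trivial, and the single-block case is excluded by comparing the coefficients of $(n-1)!\,f'(x)$ with the geometric progression $(x^n-c)/(x-\gamma)$), which is a sound completion resting on the same Schur input.
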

 Kulkarni and Sury~\cite{KS08} proved Corollary~\ref{cKS}. If $f$ is the polynomial on the left hand side of \eqref{kseq}, then $f(x)=f'(x)+x^n/n!$, and $f$ thus has only simple critical points. To see that $f$ has all distinct critical values it suffices to show that no two roots of $f'$ are such that their quotient is an $n$-th root of unity. It is shown in \cite{KS08} that this holds  by using the fact that the Galois groups of $f$ and $f'$ are either symmetric or alternating, which is a  result of Schur.

Note that Theorem~\ref{DEM} applies to equations of type $f(x)=g(y)$, where $f$ and $g$ are any of the above mentioned polynomials. In particular, the equation
\[
x^{n}+x^{n-1}+\cdots+x+1=\frac{y^m}{m!}+\cdots+\frac{y^2}{2!}+y+1,
\]
with $m\neq n$ and $m, n\geq 3$, has only finitely many integral solutions.

We now discuss applications of Theorem~\ref{DEM2}. To get complete statements of some of the results in the literature, we still need to examine the exceptional cases in Theorem~\ref{DEM2}: If $\deg f<\deg g$ we need to examine the cases when $(\deg f, \deg g)\in \{(3, 4), (3, 5), (4, 5), (4, 6)\}$, and $g(x)=f(\nu(x))$ with quadratic $\nu$ and indecomposable $f$. All these cases are easy to handle (the former via Theorem~\ref{DEM22} by direct analysis and comparison of polynomials, and the latter in the way described at the end of Section~\ref{dioph}). 

The following results can be found in \cite{DK15}. 

\begin{theorem}\label{DK2}
For $m>n\geq 3$, the equation 
\[
(1+x)^n-x^n=(1+y)^m-y^m,
\]
has only finitely many integral solutions $x, y$.
\end{theorem}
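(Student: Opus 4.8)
The plan is to apply Theorem~\ref{DEM2} to $f(x)=(1+x)^n-x^n$ and $g(y)=(1+y)^m-y^m$, after first disposing of the low-degree case $n=3$ directly. Since $(1+x)^n=\sum_j\binom{n}{j}x^j$, the top terms cancel, so $\deg f=n-1$ and $\deg g=m-1$, whence $\deg f<\deg g$. From $f'(x)=n\bigl((1+x)^{n-1}-x^{n-1}\bigr)$ one reads off the critical points $1/(\zeta-1)$ with $\zeta^{n-1}=1$, $\zeta\neq 1$; these are the $n-2$ distinct roots of $f'$, so $f$ (and likewise $g$) has only simple critical points. The fact recalled in this section from \cite{DK15}, that $(1+x)^n-x^n$ has equal critical values at at most two distinct critical points, supplies the remaining hypothesis. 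Because all critical points are simple, neither \eqref{poss11} nor \eqref{poss21} can hold: both force a critical point of multiplicity $\geq 2$ (the exponents $k_0t-1,k_1t-1$ in \eqref{poss11} exceed $1$ once $k_0+k_1=k\geq 3$ and $t\geq 2$, while $2t_0+1,2t_1+1$ in \eqref{poss21} are $\geq 3$). Thus for $n\geq 4$ the full hypotheses of Theorem~\ref{DEM2} are met.

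When $n=3$, $f(x)=3x^2+3x+1$ is quadratic and Theorem~\ref{DEM2} does not apply, so I would argue directly. Completing the square turns the equation into $(2x+1)^2=h(y)$ with $h(y)=(4g(y)-1)/3$ of degree $m-1\geq 3$. A repeated root of $h$ would be a critical point of $g$ lying over the value $1/4$; but the only real critical value of $g$ is $g(-1/2)=2^{1-m}$ (attained when $m$ is odd, the remaining critical points being non-real), and $2^{1-m}\neq 1/4$ for $m\geq 4$. Hence $h$ is squarefree of degree $\geq 3$, the curve $(2x+1)^2=h(y)$ has genus $\geq 1$, and Siegel's theorem gives finitely many integral points, hence finitely many integral solutions.

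For $n\geq 4$, Theorem~\ref{DEM2} yields finiteness unless one of its exceptional outcomes occurs: $(\deg f,\deg g)=(n-1,m-1)\in\{(3,4),(3,5),(4,5),(4,6)\}$, or $f$ is indecomposable and $g(x)=f(\nu(x))$ for a quadratic $\nu$. The four degree pairs, corresponding to $(n,m)\in\{(4,5),(4,6),(5,6),(5,7)\}$, form a finite check via Theorem~\ref{DEM22}: infinitely many solutions there would force $f$ and $g$ to be linearly related to the listed Dickson pairs or to the fifth-kind pair $(3x^4-4x^3,(ax^2-1)^3)$. The fifth-kind case is impossible because $3x^4-4x^3$ has a critical point of multiplicity $2$ whereas our $f$ has only simple critical points, an invariant preserved under the linear change in \eqref{morse2}. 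The Dickson cases are excluded by direct comparison of the explicit polynomials, matching critical values (those of $D_k(x,a)$ with $a\neq0$ are $\pm 2a^{k/2}$) against the common outer and linear data imposed by the standard-pair form, which a short computation shows $f$ and $g$ cannot satisfy simultaneously.

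The remaining and most delicate case is $g(x)=f(\nu(x))$ with $f$ indecomposable and $\deg\nu=2$, which forces $m-1=2(n-1)$, i.e.\ $m=2n-1$. If $n$ is odd, the identity $(1+x)^n-x^n=\tilde P_{n,n-1}(x)\circ(x^2+x)$ recalled above exhibits $f$ as decomposable (its outer factor has degree $(n-1)/2\geq 2$), contradicting indecomposability. If $n$ is even, then $m=2n-1$ is odd, so $g=\tilde P_{m,m-1}(y)\circ(y^2+y)$; comparing this with $g=f\circ\nu$ and invoking the essential uniqueness of decomposition into indecomposables (Ritt and Engstrom, as used at the end of Section~\ref{dioph}) forces $f$ to be linearly related to $\tilde P_{m,m-1}$, say $f=\tilde P_{m,m-1}\circ\mu_1$ with $\mu_1$ linear. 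This is impossible over $\mathbb{Q}$: the roots $-1/(2-\zeta-\bar\zeta)$ of $\tilde P_{m,m-1}$ are all real, whereas the roots $1/(\zeta-1)$ of $f$ are non-real for $n\geq 3$ (only $\zeta=-1$ yields a real root), and a real affine $\mu_1$ maps reals to reals, so it cannot carry the real root set of $\tilde P_{m,m-1}$ onto the non-real root set of $f$. I expect this composition case, together with the explicit exclusion of the Dickson pairs, to be the main obstacle, since it is the only place where one must go beyond a black-box application of Theorem~\ref{DEM2} and exploit the specific arithmetic of $(1+x)^n-x^n$.
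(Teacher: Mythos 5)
For $n\geq 4$ your argument is correct and is essentially the paper's own proof: Lemma~\ref{DK2l} plus the simplicity of the critical points (which indeed rules out \eqref{poss11} and \eqref{poss21}) puts you inside Theorem~\ref{DEM2}; the degree exceptions are referred to Theorem~\ref{DEM22}; and the case $g=f\circ\nu$ is killed by the identity $(1+x)^n-x^n=\tilde P_{n,n-1}\circ(x^2+x)$, Ritt--Engstrom uniqueness, and the real-versus-complex comparison of root sets. Your explicit parity split ($n$ odd: $f$ is decomposable, so the exceptional case is vacuous; $n$ even: $m=2n-1$ is odd and one compares with $\tilde P_{m,m-1}$) is, if anything, a cleaner write-up of what the paper does. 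That the Dickson pairs in the cases $(\deg f,\deg g)\in\{(3,4),(3,5),(4,5),(4,6)\}$ cannot occur is asserted rather than computed, but the paper defers exactly these finitely many checks to \cite{DK15}, so this is a comparable level of detail.

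The genuine gap is in your $n=3$ case (which you rightly flag as outside the scope of Theorem~\ref{DEM2}; the paper itself glosses over it). Your Siegel strategy is reasonable, but the squarefreeness of $h=(4g-1)/3$ rests on the claim that the only real critical value of $g$ is $g(-1/2)=2^{1-m}$, ``the remaining critical points being non-real.'' This is a non sequitur, and the claim is false: non-real critical points can have real critical values. At a critical point $y=1/(\zeta-1)$, $\zeta^{m-1}=1$, $\zeta\neq 1$, one has $g(y)=y^{m-1}$, and for $m=5$ the non-real critical points $(-1\pm i)/2$ give $g\bigl((-1\pm i)/2\bigr)=\bigl((-1\pm i)/2\bigr)^4=-1/4$, a real value. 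What you actually need is that $1/4$ is never a critical value, i.e.\ that $(\zeta-1)^{m-1}=4$ has no solution with $\zeta^{m-1}=1$, $\zeta\neq 1$. This is true but requires an argument; for instance, such an equality would give $2^{2/(m-1)}=(\zeta-1)/\eta$ for some $(m-1)$-th root of unity $\eta$, hence $2^{2/(m-1)}\in\Q(e^{2\pi i/(m-1)})$; since every subfield of a cyclotomic field is normal over $\Q$, while $\Q(2^{2/(m-1)})$ is a real field whose normal closure is non-real once its degree exceeds $2$, this forces $m-1\leq 4$, and the cases $m=4,5$ are checked by hand ($(\zeta-1)^{m-1}\in\{-4,16\}$ or of modulus $3\sqrt 3$, never $4$). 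Alternatively, since by Lemma~\ref{DK2l} at most two (simple) critical points share a value, $h$ has at most two double roots, so the curve $(2x+1)^2=h(y)$ still has geometric genus $\geq 1$ whenever $m\geq 8$, leaving $m\in\{4,5,6,7\}$ to direct verification. Without some such repair, the genus bound on which your application of Siegel's theorem rests is unproven.
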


\begin{lemma}\label{DK2l}
For positive integer $n\geq 3$, the polynomial $(1+x)^n-x^n$ has at least two distinct critical points and equal critical values at at most two distinct critical points.
\end{lemma}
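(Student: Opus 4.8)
The plan is to compute the critical points of $f(x):=(1+x)^n-x^n$ explicitly in terms of roots of unity, to extract a closed form for the critical values, and then to rule out three coincident critical values by a modulus argument. First I would record that $\deg f=n-1$ and
\[
f'(x)=n\bigl((1+x)^{n-1}-x^{n-1}\bigr),
\]
a polynomial of degree $n-2$. Since $x=0$ is not a zero of $f'$, a point $x\neq 0$ is critical exactly when $\bigl((1+x)/x\bigr)^{n-1}=1$, that is, when $\omega:=1+1/x$ is a nontrivial $(n-1)$-th root of unity; equivalently $x=x_\omega:=1/(\omega-1)$. As $\omega$ runs over the $n-2$ nontrivial $(n-1)$-th roots of unity, the Möbius map $\omega\mapsto x_\omega$ is injective, so $f$ has exactly $n-2$ distinct (hence simple) critical points. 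For $n\geq 4$ this is at least two; the case $n=3$, where $f$ is merely quadratic, is degenerate and treated separately.

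Next I would compute the critical values in closed form. Setting $c:=(1+x_\omega)^{n-1}=x_\omega^{n-1}$ and factoring,
\[
f(x_\omega)=(1+x_\omega)(1+x_\omega)^{n-1}-x_\omega\cdot x_\omega^{n-1}=c\bigl((1+x_\omega)-x_\omega\bigr)=c=x_\omega^{n-1}=(\omega-1)^{-(n-1)}.
\]
Consequently, two critical points $x_\omega$ and $x_{\omega'}$ share a critical value precisely when $(\omega-1)^{n-1}=(\omega'-1)^{n-1}$.

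The heart of the argument, and the step I expect to be the main obstacle, is to show that this last equality forces $\omega'\in\{\omega,\overline{\omega}\}$. Passing to absolute values gives $|\omega-1|=|\omega'-1|$. Writing $\omega=e^{2\pi i k/(n-1)}$ with $1\le k\le n-2$, one has $|\omega-1|=2\sin\!\bigl(\pi k/(n-1)\bigr)$, and since $\sin$ is injective up to reflection about $\pi/2$ on $(0,\pi)$, the equation $2\sin\!\bigl(\pi k/(n-1)\bigr)=2\sin\!\bigl(\pi k'/(n-1)\bigr)$ with $1\le k,k'\le n-2$ holds only for $k'=k$ or $k'=(n-1)-k$, i.e.\@ $\omega'=\omega$ or $\omega'=\overline{\omega}$. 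Hence every critical value is attained by at most the two points $x_\omega$ and $x_{\overline{\omega}}$, which is exactly the assertion that $f$ has equal critical values at at most two distinct critical points.

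Finally, I would remark that this conjugate pairing is transparent from the symmetry $f(-1-x)=(-1)^{n+1}f(x)$, obtained by substituting $x\mapsto -1-x$: the involution $x\mapsto-1-x$ permutes the critical points and corresponds to $\omega\mapsto\overline{\omega}$, so the critical values naturally group into (at most) conjugate pairs. The modulus computation above is precisely what excludes a third critical point sharing a value, completing the proof.
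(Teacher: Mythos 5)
Your proof is correct and takes essentially the same route as the paper's: both arguments compute the critical value at a critical point $\alpha$ to be $\alpha^{n-1}$ and then use a modulus argument to force two critical points sharing a value to be complex conjugates --- your parametrization $x_\omega=1/(\omega-1)$ together with the sine-injectivity step is precisely the paper's observation that the critical points lie on the line $\Re(z)=-1/2$, transported through the M\"obius map. One caveat: you defer the case $n=3$ but never return to it, and there the polynomial $3x^2+3x+1$ has a single critical point, so the lemma's ``at least two distinct critical points'' clause actually fails --- a blind spot shared by the paper, whose proof addresses only the ``at most two equal critical values'' assertion.
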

\begin{proof}
Note that $(1+x)^n-x^n=P_{n, n-1}(x)$ by \eqref{TBE}. Take two roots $\alpha$ and $\beta$ of $P_{n, n-1}'(x)=n((x+1)^{n-1}-x^{n-1})$ such that $P_{n, n-1}(\alpha)=P_{n, n-1}(\beta)$. The former implies $(\alpha+1)^{n-1}=\alpha^{n-1}$ and  $(\beta+1)^{n-1}=\beta^{n-1}$, and so the latter yields $\alpha^{n-1}=\beta^{n-1}$. Note that the roots of $(x+1)^{n-1}-x^{n-1}$ lie on the vertical line $\Re(z)=-1/2$. Then from $\alpha^{n-1}=\beta^{n-1}$ it follows that $\alpha$ and $\beta$ are complex conjugates, since they are distinct but have equal absolute values. 
\end{proof}

\begin{proof}[Proof of Theorem~\ref{DK2}]

By Theorem~\ref{DEM2} it follows that the equation has finitely many integral solutions, unless either $n, m\leq 6$, or $(1+x)^m-x^m=((1+x)^n-x^n)\circ \nu(x)$ for some quadratic $\nu\in K[x]$. We now show that the latter case cannot occur. One easily verifies that
\[
(1+x)^n-x^n=\tilde P_{n, n-1}(x)\circ (x^2+x), \quad  \tilde P_{n, n-1}(x):= \prod_{j=1}^{n'}\left((2-\omega_j-\overline{\omega_j})x+1\right), \ n=2n'+1.
\]
By Lemma~\ref{DK2l} and Proposition~\ref{novo}, $\tilde P_{n, n-1}$ is indecomposable for all odd  $n>2$, and if $n>2$ is even, then $(1+x)^n-x^n$ is indecomposable.

If $(1+x)^m-x^m=((1+x)^n-x^n)\circ \nu(x)$ for some quadratic $\nu$, then $(1+x)^m-x^m=\tilde P_{n, n-1}(x) \circ \mu_1(x)$ with $\mu_1 \in \Q[x]$ by \cite[Cor.~2.9]{ZM} (see the end of Section~\ref{dioph}). This cannot be since all the roots of  $\tilde P_{n, n-1}(x) $  are real and the roots of $(1+x)^m-x^m$ are, except for at most one (which is $-1/2$ when $m$ is even), all complex. 
Using Theorem~\ref{DEM22} one easily eliminates the cases $(m, n)\in \{(4, 3), (5, 3), (5, 4), (6, 4)\}$, see \cite[Lem.~1.3 \& Thm.~1.2]{DK15}.

\end{proof}





\begin{lemma}\label{diffS}
Let $(y_n)_n$ be a sequence of polynomials with real coefficients  that satisfy a differential equation
\begin{equation}\label{diffeq}
\sigma(x)y_n''(x)+\tau y_n'(x)-\lambda_n y_n(x)=0, \quad n\geq 0,
\end{equation}
with $\sigma, \tau\in \R[x]$, $\deg \sigma \leq 2$, $\deg \tau\leq 1$, $\lambda_n\in \R \setminus \{0\}$ and nonvanishing  $\sigma'-2\tau$. Then for all $n\geq 3$, $y_n$ has equal critical values at at most two distinct critical points.
\end{lemma}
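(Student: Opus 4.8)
The plan is to manufacture a first integral of \eqref{diffeq} and let the hypothesis on $\sigma'-2\tau$ convert it into a monotonicity statement. First I record what \eqref{diffeq} says at a critical point: if $y_n'(\beta)=0$, then $\sigma(\beta)y_n''(\beta)=\lambda_n y_n(\beta)$. Next I introduce
\[
D(x):=\sigma(x)y_n'(x)^2-\lambda_n y_n(x)^2 .
\]
Differentiating and using $\sigma y_n''=\lambda_n y_n-\tau y_n'$ from \eqref{diffeq} to eliminate the second derivative, the $\lambda_n y_n y_n'$ terms cancel and one is left with
\[
D'(x)=(\sigma'(x)-2\tau(x))\,y_n'(x)^2 .
\]
This is exactly the step that uses the hypothesis: $\rho:=\sigma'-2\tau$ is a \emph{nonzero} polynomial of degree at most $1$. (If $\rho$ were allowed to vanish identically, $D$ would be constant, which is precisely the Dickson/Chebyshev situation of \eqref{dickdiff}, where arbitrarily many critical points share a value; compare Corollary~\ref{critdick}.)

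The key point is that at every critical point $\beta$ one has $D(\beta)=-\lambda_n y_n(\beta)^2$, so all critical points sharing a critical value $c$ lie on the single level set $\{D=-\lambda_n c^2\}$; it therefore suffices to bound by $2$ the number of critical points lying on any level set of $D$. Over $\R$ this is immediate from the factorization $D'=\rho\,(y_n')^2$: since $(y_n')^2\ge 0$ and $\rho\not\equiv 0$ has degree at most $1$, $\rho$ has at most one real zero $x^\ast$ and a single sign change there, so $D'$ keeps a constant sign on each of $(-\infty,x^\ast)$ and $(x^\ast,\infty)$, vanishing only at the finitely many real zeros of $y_n'$. Hence $D$ is strictly monotone, thus injective, on each of these two intervals, so it attains every value at most twice on $\R$. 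Consequently at most two real critical points can share a value of $D$, and \emph{a fortiori} at most two real critical points can have a prescribed critical value $c$. (When $\deg\rho=0$ the same argument gives strict monotonicity on all of $\R$.) This already settles the families whose critical points are all real, such as Hermite, Laguerre, Jacobi and Gegenbauer, whose derivatives have only real zeros.

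The main obstacle is that \eqref{diffeq} also covers families whose critical points are genuinely complex — the Bessel polynomials, for which $\sigma=x^2$ while $\sigma'-2\tau$ still has degree $1$ — and the monotonicity argument is confined to $\R$. To reach the complex critical points I would pass to the homogeneous second-order equation satisfied by $w:=y_n'$, obtained by differentiating \eqref{diffeq}:
\[
\sigma w''+(\sigma'+\tau)w'+(\tau'-\lambda_n)w=0 .
\]
By uniqueness for this equation, $w$ has only simple zeros wherever $\sigma\neq 0$, so every critical point $\beta$ with $\sigma(\beta)\neq 0$ is simple; together with $\sigma(\beta)y_n''(\beta)=\lambda_n y_n(\beta)$ this shows a nonzero critical value can occur only at such simple points. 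I would then study the order of vanishing of $D+\lambda_n c^2$ at these points against the rigid shape of $D'=\rho\,(y_n')^2$ (a polynomial of degree $\le 1$ times a perfect square), exploiting also that $y_n\in\R[x]$ forces complex critical points into conjugate pairs with conjugate values. Converting this local and degree-theoretic information into the sharp bound $2$ over $\overline{K}$ — reproducing what monotonicity yields over $\R$ — is the delicate part, and is where I expect the real work of the proof to concentrate.
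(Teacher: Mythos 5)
Your first integral is the paper's proof, essentially verbatim: the paper sets $\lambda_n f(x):=\lambda_n y_n(x)^2-\sigma(x)y_n'(x)^2$ (your $D$ is $-\lambda_n f$), computes $\lambda_n f'(x)=-(\sigma'(x)-2\tau(x))\,y_n'(x)^2$ using the differential equation to eliminate $y_n''$, and concludes from $\deg(\sigma'-2\tau)\leq 1$, $\sigma'-2\tau\not\equiv 0$ and $y_n'(x)^2\geq 0$ that $f$ changes monotonicity at most once on $\R$, hence takes each value at most twice; since $f(\beta)=y_n(\beta)^2$ at every critical point $\beta$, at most two real critical points can share a critical value. So the part of your proposal that you regard as settled is the \emph{whole} of the paper's proof. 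The ``delicate part'' you flag --- non-real critical points, about which monotonicity on $\R$ says nothing --- is simply not addressed by the paper: its proof, exactly like your completed argument, bounds only the number of real critical points with a given critical value. This discrepancy is harmless for the families whose zeros (hence, by Rolle's theorem, critical points) are all real --- Hermite, Laguerre, Jacobi, Gegenbauer, and the Chebyshev-type polynomials $U_n$ behind Theorem~\ref{DT} --- but it is a genuine issue for Bessel polynomials, which the paper also lists among the applications and whose zeros and critical points are not real (the paper's later assertion that all the listed orthogonal families have simple real zeros fails for Bessel). In short: measured against the paper's own proof, your proposal has no gap; you have instead identified one in the paper, and your sketched degree-theoretic attack over $\C$ (or a restriction of the lemma to real critical points) is precisely what would be needed to close it.
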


\begin{proof}
By letting $\lambda_n f(x):=\lambda_n y_n(x)^2-\sigma y_n'(x)^2$ we get $\lambda_n f'(x)=-(\sigma'(x)-2\tau(x))y_n'(x)^2$,
and from $\deg (\sigma'-2\tau)\leq 1$ and $y_n'(x)^2\geq 0$ for all $x$, it follows that there exists $x_0\in \R$ such that $f'(x)\geq 0$ for all $x\geq x_0$ and $f'(x)\leq 0$ for all $x\leq x_0$, or vice versa $f'(x)\leq 0$ for all $x\geq x_0$ and $f'(x)\geq x_0$ for all $x\leq x_0$. This together with $\lambda_n f(x):=\lambda_n y_n(x)^2-\sigma y_n'(x)^2$, shows that $y_n$ has equal critical values at at most two distinct critical points.

\end{proof}
Lemma~\ref{diffS} is due to Stoll~\cite{T03}. He used it to find the possible decompositions of some classical orthogonal polynomials, namely Hermite, Laguerre, Jacobi, Gegenbauer and Bessel polynomials. They satisfy a differential equation of type \eqref{diffeq} with nonvanishing $\sigma'-2\tau$. These polynomials also have all simple real zeros, and thus also all simple critical points, by Rolle's theorem.  Stoll studied Diophantine equations with these polynomials in \cite{ S04, ST03, ST05}. 


\begin{lemma}\label{tril}
Let $K$ be a field with $\charp(K)=0$, $a_1, a_2, a_3\in K$ with $a_1a_2\neq 0$, and $n_1, n_2\in \N$ with
$\gcd(n_1, n_2)\leq 2$. Then $a_1x^{n_1}+a_2x^{n_2}+a_3$ has at least two distinct critical points and equal critical values at at most two distinct critical points 
\end{lemma}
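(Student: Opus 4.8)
The plan is to reduce the entire statement to two elementary identities obtained by forming suitable $K[x]$-linear combinations of $f$ and $xf'$; these will express the value of $f$ at any critical point through a single power of that point, after which the hypothesis $\gcd(n_1,n_2)\le 2$ does all the work. Write $f(x)=a_1x^{n_1}+a_2x^{n_2}+a_3$ and assume without loss of generality $n_1>n_2$ (the exponents are distinct, since $n_1=n_2$ together with $\gcd(n_1,n_2)\le 2$ would collapse $f$ to a quadratic binomial). For the count of critical points I would use the factorization $f'(x)=x^{n_2-1}(a_1n_1x^{n_1-n_2}+a_2n_2)$: the second factor is a binomial $x^{n_1-n_2}=c$ with $c=-a_2n_2/(a_1n_1)\ne 0$, hence (as $\charp K=0$) has $n_1-n_2\ge 1$ distinct nonzero roots in $\overline K$; if $n_2\ge 2$ then $0$ is an additional critical point, and if $n_2=1$ then $n_1-n_2\ge 2$ in every genuine (degree $\ge 3$) case, so in all cases $f$ has at least two distinct critical points.

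The heart of the argument is that $xf'(x)=n_1a_1x^{n_1}+n_2a_2x^{n_2}$, whence
\[ n_1 f(x)-xf'(x)=(n_1-n_2)a_2x^{n_2}+n_1a_3,\qquad n_2 f(x)-xf'(x)=(n_2-n_1)a_1x^{n_1}+n_2a_3. \]
Evaluating these at a critical point $\alpha$, where $\alpha f'(\alpha)=0$, yields $n_1 f(\alpha)=(n_1-n_2)a_2\alpha^{n_2}+n_1a_3$ and $n_2 f(\alpha)=(n_2-n_1)a_1\alpha^{n_1}+n_2a_3$. Consequently, if $\alpha,\beta$ are critical points with $f(\alpha)=f(\beta)$, the first identity forces $\alpha^{n_2}=\beta^{n_2}$ (using $n_1\ne n_2$ and $a_2\ne 0$) and the second forces $\alpha^{n_1}=\beta^{n_1}$ (using $a_1\ne 0$).

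I would then conclude as follows. A critical point $\alpha$ with $f(\alpha)=a_3=f(0)$ must satisfy $(n_1-n_2)a_2\alpha^{n_2}=0$, hence $\alpha=0$; so the critical value $a_3$ carried by $0$ is attained at no other critical point. For two \emph{distinct nonzero} critical points $\alpha,\beta$ with equal critical value, the quotient $\zeta=\alpha/\beta$ satisfies $\zeta^{n_1}=\zeta^{n_2}=1$, so its order divides $d=\gcd(n_1,n_2)\le 2$; thus $\zeta\in\{1,-1\}$, and being $\ne 1$ it equals $-1$, i.e.\ $\beta=-\alpha$. If three distinct critical points shared one value, they would all be nonzero and pairwise negatives of each other, which is impossible for three distinct elements. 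Hence every critical value is attained at at most two distinct critical points.

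The decisive step is the pair of linear identities, after which everything is forced by the root-of-unity constraint $\zeta^{\gcd(n_1,n_2)}=1$; the only points needing care are the bookkeeping at $x=0$ (a critical point precisely when $n_2\ge 2$, and the unique point carrying the value $a_3$) and the exclusion of the degenerate low-degree case in the critical-point count, where the lemma is genuinely about a trinomial of degree at least $3$.
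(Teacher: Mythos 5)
Your proof is correct and follows essentially the same route as the paper: the paper's one-line argument rests on the identity $xf'(x)=n_1(f(x)-a_3)-a_2(n_1-n_2)x^{n_2}$ (spelled out after Corollary~\ref{tri}), which at critical points with equal values forces $\alpha^{n_2}=\beta^{n_2}$ and then $\alpha^{n_1}=\beta^{n_1}$, so that $\gcd(n_1,n_2)\leq 2$ gives $\beta=\pm\alpha$ --- exactly your pair of linear combinations of $f$ and $xf'$. Your write-up merely makes explicit the bookkeeping the paper leaves implicit (the critical point at $0$, the impossibility of three distinct points sharing a value, and the degenerate case $(n_1,n_2)=(2,1)$, where the statement indeed requires degree at least $3$).
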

\begin{proof}
Let $f(x)=a_1x^{n_1}+a_2x^{n_2}+a_3$. Then the statement follows by $xf'(x)=n_1(f(x)-a_3) +a_2(n_1-n_2)x^{n_2}$ and $\gcd(n_1, n_2)\leq 2$.
\end{proof}
By Lemma~\ref{tril}, we may apply Theorem~\ref{DEM22} to the equation in  Corollary~\ref{tri}, with
the assumptions weakened to $\gcd(n_1, n_2)\leq 2$ and $\gcd(m_1, m_2)\leq 2$.  Schinzel~\cite{S12} characterized when this equation, with no assumptions on the greatest common denominators of $n_i$'s and $m_i$'s, but with $K=\Q$ and $\mathcal{O}_K=\Z$, has infinitely many solutions with a bounded denominator.


Beukers, Shorey and Tijdeman~\cite{BST99} proved   the following theorem.

\begin{theorem}\label{bst}
For $m> n\geq 3$  and $d_1, d_2\in \Q$, the equation
\[
x(x+d_1)\cdots(x+(m-1)d_1)=y(y+d_2)\cdots(y+(n-1)d_2)
\]
has only finitely many integral solutions $x, y$. 
\end{theorem}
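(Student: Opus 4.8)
The plan is to run the argument along the lines of the proof of Theorem~\ref{DK2}: reduce everything to the critical data, invoke Theorem~\ref{DEM2}, and then clear out its two families of exceptions. Write $f(y)=y(y+d_2)\cdots(y+(n-1)d_2)$ and $g(x)=x(x+d_1)\cdots(x+(m-1)d_1)$ for the two sides, with $\deg f=n<m=\deg g$ (we may assume $d_1d_2\neq 0$, as is implicit for genuine progressions). Since $f(y)=d_2^nF_n(y/d_2)$ and $g(x)=d_1^mF_m(x/d_1)$ with $F_k(x):=x(x+1)\cdots(x+k-1)$, each of $f,g$ is linearly related to some $F_k$, so the number and multiplicities of their critical points, and the mutual ratios of their critical values, are those of $F_k$. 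By Rolle's theorem $F_k$ has $k-1$ distinct simple real critical points (one strictly between each pair of consecutive roots), so $f$ and $g$ have at least two distinct, and in fact only simple, critical points. By \cite{BST99} (cf.\ Lemma~\ref{DK2l}), $F_k$ has equal critical values at at most two distinct critical points for $k\geq 3$, and this passes to $f,g$. As $f,g$ have only simple critical points, neither can satisfy \eqref{poss11} or \eqref{poss21}, each of which forces a critical point of multiplicity at least $2$. Hence Theorem~\ref{DEM2} applies over $K=\Q$, $\mathcal{O}_S=\Z$, and gives finitely many integral solutions unless $(n,m)\in\{(3,4),(3,5),(4,5),(4,6)\}$, or $f$ is indecomposable and $g(x)=f(\nu(x))$ for some quadratic $\nu$.

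To dispose of the composition exception I would argue as at the end of Section~\ref{dioph}. Here $g=f\circ\nu$ with $\deg\nu=2$ forces $m=2n$, and since $F_{2n}$ is symmetric about its center $c=-(2n-1)/2$ we may write $F_{2n}(x)=\Phi\big((x-c)^2\big)$ with $\deg\Phi=n$. By the uniqueness results of Ritt~\cite{R22} and Engstrom~\cite{E41}, in any representation $F_{2n}=A\circ q$ with $\deg A=n$ and $\deg q=2$ the outer factor $A$ is linearly related to $\Phi$, so it suffices to check that $\Phi$ is not linearly related to $F_n$. This follows by comparing roots: the roots of $\Phi$ are the numbers $\big((2j-1)/2\big)^2$ for $j=1,\dots,n$, which for $n\geq 3$ are not in arithmetic progression, whereas the roots $0,-1,\dots,-(n-1)$ of $F_n$ are consecutive integers, and no linear substitution can carry the one set onto the other. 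Thus this exception does not occur.

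The genuinely case-by-case part, and the main obstacle, is the elimination of the four small-degree pairs via Theorem~\ref{DEM22}, which I would handle using linear invariants of the critical data. For $(n,m)\in\{(3,5),(4,5)\}$ the only surviving standard pair has its degree-$5$ member linearly related to $D_5$, which has exactly two distinct critical values; but writing $F_5(x)=(x+2)^5-5(x+2)^3+4(x+2)$, a direct computation shows $F_5$ has four distinct critical values, so $g$ cannot be linearly related to $D_5$. For $(n,m)=(4,6)$ the surviving pair is the fifth-kind pair, requiring $f$ linearly related to $3x^4-4x^3$ and $g$ to $(ax^2-1)^3$; both of these have a critical point of multiplicity $\geq 2$, contradicting that $F_4$ and $F_6$ have only simple critical points. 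The delicate case is $(n,m)=(3,4)$, where the pair $\big(D_3(x,a^4),D_4(x,a^3)\big)$ leaves counts and multiplicities unchanged, so one must use the critical values themselves: here $f=e_1D_3(\lambda(y),a^4)+e_0$ and $g=e_1D_4(\mu(x),a^3)+e_0$ for common $e_0,e_1$, and since both Dickson polynomials have critical values $\pm 2a^6$, the two distinct critical values of $f$ and those of $g$ have the same sum $2e_0$; but the distinct critical values of $f=d_2^3F_3(y/d_2)$ sum to $0$ (as $F_3$ is linearly related to the odd polynomial $x^3-x$), while those of $g=d_1^4F_4(x/d_1)$, namely $\{9/16,-1\}$ scaled by $d_1^4$, sum to $-7d_1^4/16\neq 0$, a contradiction. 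With all exceptions excluded, Theorem~\ref{DEM2} yields the finiteness of integral solutions.
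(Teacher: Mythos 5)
Your proposal is correct, and its core is the same as the paper's: write each side as a pure scaling of $F_k(x)=x(x+1)\cdots(x+k-1)$, use Rolle to get simple critical points (which rules out \eqref{poss11} and \eqref{poss21}), use the Beukers--Shorey--Tijdeman lemma for ``equal critical values at at most two distinct critical points'' (this is Lemma~\ref{BST} in the paper; your parenthetical pointer to Lemma~\ref{DK2l} is a slip, since that lemma concerns $(1+x)^n-x^n$), and then apply Theorem~\ref{DEM2}/Theorem~\ref{DEM22}. Where you genuinely go beyond the paper is completeness: the paper stops at this reduction, saying the theorem follows ``to the most part'' and leaving the exceptional cases unexamined, whereas you eliminate them explicitly, and your eliminations check out. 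Your handling of the $g=f\circ\nu$ case (forcing $m=2n$) via the symmetric decomposition $F_{2n}=\Phi\left((x-c)^2\right)$, Ritt--Engstrom uniqueness, and the fact that the roots $\left((2j-1)/2\right)^2$ of $\Phi$ form no arithmetic progression while the roots of $F_n$ do, is precisely the strategy the paper outlines at the end of Section~\ref{dioph} and carries out for Theorem~\ref{DK2} (there with a real-versus-complex root contrast in place of your arithmetic-progression contrast); one small detail worth stating is that $\Phi$ is indecomposable, which follows from Engstrom's invariance of the number of prime factors since $f\circ\nu$ is a two-factor complete decomposition. Your small-degree eliminations are also sound: $D_5$ has exactly two distinct critical values while a direct computation with $F_5(x)=(x+2)^5-5(x+2)^3+4(x+2)$ gives four, killing $(3,5)$ and $(4,5)$; the fifth-kind pair has critical points of multiplicity $\geq 2$ while $F_4$, $F_6$ have only simple ones, killing $(4,6)$; and for $(3,4)$ the common shift $e_0$ in Theorem~\ref{DEM22} forces the sums of the two distinct critical values of $f$ and of $g$ to agree, giving $0=2e_0=-7d_1^4/16$, a contradiction. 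So your write-up supplies exactly the case analysis the paper defers to \cite{BST99}.
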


Theorem~\ref{bst} follows, to the most part, by
Theorem~\ref{DEM22} and the following lemma proved in \cite{BST99} as a step in finding the possible decompositons of the polynomial $x(x+d_1)\cdots(x+(m-1)d_1)$ with $m\in \N$ and $d\in \Q$.

\begin{lemma}\label{BST}
For nonzero $d\in \Q$, and $m\geq 3$, the polynomial $x(x+d)\cdots(x+(m-1)d)$ has at least two distinct critical points and equal critical values at at most two distinct critical points.
\end{lemma}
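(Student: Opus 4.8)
The plan is to reduce to the model polynomial $p(x)=\prod_{i=0}^{m-1}(x-i)$ and then to prove a sharp monotonicity statement about its critical values. First I would normalize: the substitution $x\mapsto dx$ turns $x(x+d)\cdots(x+(m-1)d)$ into $d^{m}\,x(x+1)\cdots(x+m-1)$, and since neither the number of distinct critical points nor the coincidences among critical values are affected by the linear change $x\mapsto dx$ or by the nonzero constant factor $d^{m}$, it suffices to treat $d=1$. Shifting the variable, I work with $p(x)=\prod_{i=0}^{m-1}(x-i)$, whose roots $0,1,\dots,m-1$ are all real. By Rolle's theorem there is a critical point strictly between each pair of consecutive roots, and since $\deg p'=m-1$ these account for all of them: they are real, simple, and interlace the roots, so $p$ has exactly $m-1\ge 2$ distinct critical points. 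This already settles the first assertion. I also record the symmetry $p\bigl((m-1)-x\bigr)=(-1)^{m}p(x)$ about the center $c=(m-1)/2$, which shows the critical points are symmetric about $c$ and that $|p|$ takes equal values at points symmetric about $c$.

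For the second assertion I would prove the key fact that the critical value magnitude $|p(\beta)|$ is a \emph{strictly increasing} function of the distance $|\beta-c|$ as $\beta$ ranges over the critical points. Granting this, the lemma follows immediately: if three distinct critical points shared a common critical value they would in particular share a common magnitude, hence a common distance $\delta=|\beta-c|$ from the center; but for each $\delta$ there are at most two points, $c+\delta$ and $c-\delta$, so three distinct critical points at one distance is impossible. Thus no critical value is attained at three distinct critical points.

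The heart of the argument, and the step I expect to be the main obstacle, is establishing this monotonicity. The device I would use is the telescoping identity
\[
\frac{p(x)}{p(x-1)}=\frac{\prod_{i=0}^{m-1}(x-i)}{\prod_{i=0}^{m-1}(x-1-i)}=\frac{x}{x-m},
\]
together with the fact that $\log|p|$ is strictly concave on each interval between consecutive roots (its second derivative is $-\sum_i (x-i)^{-2}<0$), so that $|p|$ attains a unique maximum on each such interval, exactly at the critical point lying there. Concretely, let $\beta'\in(k,k+1)$ and $\beta\in(k-1,k)$ be adjacent critical points with $k\ge1$. Then $\beta'-1$ lies in $(k-1,k)$, where $\beta$ is the unique maximizer of $|p|$, so $\bigl|p(\beta'-1)\bigr|\le\bigl|p(\beta)\bigr|$; combining this with the identity gives
\[
|p(\beta')|=\frac{\beta'}{m-\beta'}\,\bigl|p(\beta'-1)\bigr|\le\frac{\beta'}{m-\beta'}\,\bigl|p(\beta)\bigr|<\bigl|p(\beta)\bigr|,
\]
where the last step uses $\beta'/(m-\beta')<1$, valid whenever $\beta'<m/2$. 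Since every critical point in the left half satisfies $\beta'\le c=(m-1)/2<m/2$, chaining these comparisons from the leftmost critical point (which serves as the base case, having no interval to its left) inward shows that $|p|$ strictly decreases toward the center on $(0,c]$; the symmetry about $c$ then yields the same behavior on the right half, establishing the required strict monotonicity in $|\beta-c|$. The only points needing separate mention are harmless: when $m$ is even the central interval contributes a single critical point sitting exactly at $c$ (distance $0$), and when $m$ is odd $c$ is a root and carries no critical point. The genuine subtlety is locating the right comparison---matching $\beta'$ with $\beta'-1$ through the shift identity rather than directly with $\beta$---which is precisely what makes the elementary estimate $\beta'/(m-\beta')<1$ decisive.
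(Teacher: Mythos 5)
Your proof is correct and follows essentially the same route as the paper's: after reducing to $d=1$, both arguments use Rolle's theorem for the $m-1$ simple real critical points, the symmetry about the center, the telescoping shift identity $p(x)/p(x-1)=x/(x-m)$, and the unique maximizer of $|p|$ on each interval between consecutive roots to deduce that the critical-value magnitudes are strictly monotone in the distance from the center, so no value can occur at three distinct critical points. The only cosmetic differences are your choice of roots $0,1,\dots,m-1$ instead of $0,-1,\dots,-(m-1)$ and your explicit log-concavity justification of the unique-maximum claim, which the paper simply asserts.
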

\begin{proof}
We show that $x(x+1)\cdots(x+(m-1))$ for $m\geq 3$ has equal critical values at at most two distinct critical points . Then the same follows for $x(x+d)\cdots(x+(m-1)d)$.

Let $\alpha_1, \alpha_2, \ldots, \alpha_{m-1}$ be the critical points of $f(x)=x(x+1)\cdots(x+(m-1))$. By Rolle's's theorem they are simple and real and can be ordered so that $-(m-1)<\alpha_{m-1}<-(m-2)<\alpha_{m-2}< \ldots <-1<\alpha_1<0$. Note that $\lvert f(x)\rvert$ asumes its unique maximal value on the interval $[-i, -(i-1)]$ at $\alpha_i$. Hence, $\lvert f(\alpha_{i-1})\rvert \geq \lvert f(\alpha_i+1)\rvert$. Therefore
\[
\frac{\lvert f(\alpha_{i-1})\rvert}{\lvert f(\alpha_i)\rvert}\geq  \frac{\lvert f(\alpha_{i}+1)\rvert}{\lvert f(\alpha_i)\rvert}=\frac{\lvert \alpha_i+1\rvert \lvert \alpha_i+2\rvert \ldots \lvert \alpha_i+m\rvert}{\lvert \alpha_i\rvert \lvert \alpha_i+1\rvert \ldots \lvert \alpha_i+(m-1)\rvert}=\frac{\lvert \alpha_i+m\rvert}{\lvert \alpha_i\rvert}
\]
for $i=1, 2, \ldots, m-1$. 

Note that $f(-d_1(m-1)/2-x)=f(-d_1(m-1)/2+x)$ and hence $\lvert f(\alpha_{i})\rvert=\lvert f(\alpha_{m-i})\rvert$ for all $i=1, 2, \ldots, m-1$. For $i\leq m/2$ we have $\lvert f(\alpha_{i-1})\rvert>\lvert f(\alpha_{i})\rvert$, and by symmetry $\lvert f(\alpha_{i-1})\rvert<\lvert f(\alpha_{i})\rvert$ for $i\geq m/2+1$. 
\end{proof}


\begin{theorem}\label{DT}
Let $G_0(x)=0$, $G_1(x)=1$, and for nonzero integer $B$ let $G_{n+1}(x)=xG_n(x)+BG_{n-1}(x)$ for $n\in \N$. For $m> n\geq 3$, the equation $G_m(x)=G_n(y)$ has only finitely many integral solutions $x, y$. 
\end{theorem}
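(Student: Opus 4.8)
The plan is to recognize the family $(G_n)_n$ as a normalization of the Dickson polynomials of the second kind and then to feed it into Theorem~\ref{DEM2} (taking $K=\Q$, $S=\{\infty\}$, $\mathcal{O}_S=\Z$, so that ``bounded denominator'' means integral). First I would record the basic data: an immediate induction gives $\deg G_n=n-1$, and differentiating the recurrence (or comparing with the Chebyshev normalization) shows that every $G_n$ satisfies
\[
(x^2+4B)\,G_n''(x)+3x\,G_n'(x)-(n^2-1)\,G_n(x)=0.
\]
This is an instance of \eqref{diffeq} with $\sigma(x)=x^2+4B$, $\tau(x)=3x$ and $\lambda_n=n^2-1\neq0$, and here $\sigma'-2\tau=-4x$ does not vanish identically. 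Hence Lemma~\ref{diffS} applies to the whole family and shows that each $G_n$ has equal critical values at at most two distinct critical points.

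Reading the differential equation at a root $x_0$ of $G_n'$ gives $(x_0^2+4B)G_n''(x_0)=(n^2-1)G_n(x_0)$, so a \emph{multiple} critical point (one with $G_n''(x_0)=0$ as well) would satisfy $G_n(x_0)=0$ and thus be a repeated root of $G_n$; as $G_n$ has distinct roots for $B\neq0$, all $n-2$ critical points of $G_n$ are simple and distinct, giving at least two distinct critical points once $n\ge4$. Because every critical point is simple, $G_n$ can satisfy neither \eqref{poss11} nor \eqref{poss21}, both of which require a critical point of multiplicity at least two. Thus for $4\le n<m$ the pair $f=G_n$, $g=G_m$ satisfies all hypotheses of Theorem~\ref{DEM2} (with $\deg f=n-1<m-1=\deg g$), which yields finiteness unless $(n-1,m-1)\in\{(3,4),(3,5),(4,5),(4,6)\}$ or $G_n$ is indecomposable with $G_m=G_n\circ\nu$ for a quadratic $\nu$.

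For the four small-degree pairs I would invoke Theorem~\ref{DEM22}. The fifth-kind pair $(3x^4-4x^3,(ax^2-1)^3)$ is excluded at once, since those polynomials have a nonsimple critical point while $G_n,G_m$ do not; for the remaining (third-kind Dickson) pairs I would substitute the explicit low-degree polynomials $G_4=x^3+2Bx$, $G_5=x^4+3Bx^2+B^2$, $G_6=x^5+4Bx^3+3B^2x$ into the relations of Theorem~\ref{DEM22} and compare coefficients. The shared multiplier and additive constant over-determine the parameters and force numerical contradictions (for example $(G_4,G_5)$, of degrees $(3,4)$, forces $\tfrac12=\tfrac{9}{16}$, while $(G_5,G_6)$, of degrees $(4,5)$, forces $G_5$ to be linearly related to $D_4$ only with a nonzero additive constant, incompatible with the odd $G_6$). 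Hence no small-degree pair produces infinitely many solutions. To dispose of the composition exception I would show directly that $G_m\neq G_n\circ\nu$ for every quadratic $\nu$: such a relation forces $m=2n-1$, and since $G_m$ is even of degree $2n-2$ and $G_n$ is indecomposable, $\nu$ must be even, $\nu(x)=\rho x^2+\eta$. Then the squares of the roots of $G_m$ form an affine image of the roots of $G_n$; in the Chebyshev description these are the configurations $\{\cos(2j\pi/(2n-1))\}_{j=1}^{n-1}$ and $\{\cos(k\pi/n)\}_{k=1}^{n-1}$, the second of which is symmetric under reflection about $0$ (via $k\mapsto n-k$) while the first is not (an even--odd index obstruction on the would-be reflection), so they cannot be affinely equivalent. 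This is precisely the Ritt--Engstrom root comparison described at the end of Section~\ref{dioph}, cf.\ \cite[Cor.~2.9]{ZM}.

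Finally the boundary case $n=3$ lies outside Theorem~\ref{DEM2}, since $\deg G_3=2$; here $G_3(y)=y^2+B$ and the equation reads $y^2=G_m(x)-B$. As $G_m$ has at most two critical points with critical value $B$ (again Lemma~\ref{diffS}) and all its critical points are simple, $G_m(x)-B$ has at most two double roots, so its squarefree part has degree at least $m-3$; for $m\ge6$ this is $\ge3$, the curve has positive genus, and Siegel's theorem \cite{S29} gives finiteness, while the finitely many cases $m\in\{4,5\}$ (including degenerate factorizations such as $G_5-1=x^2(x^2+3)$) are checked by hand. I expect the genuine difficulty to be the exceptional-case bookkeeping of Theorem~\ref{DEM2}, and above all the composition case $G_m=G_n\circ\nu$: ruling it out uniformly in $n$ and in the integer parameter $B$ appears to require the explicit root description and the reflection-symmetry comparison above rather than a one-line degree or coefficient argument.
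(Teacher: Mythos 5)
Your proposal is correct, and its core is the same as the paper's: establish that each $G_n$ has equal critical values at at most two distinct critical points via the second-order differential equation and Lemma~\ref{diffS}, note that all critical points are simple (so neither \eqref{poss11} nor \eqref{poss21} can hold), and then invoke Theorem~\ref{DEM2}/Theorem~\ref{DEM22}. There are two real differences. First, the paper passes through the Chebyshev polynomials of the second kind, writing $G_n=\mu_1\circ U_{n-1}\circ\mu_2$, applying Lemma~\ref{diffS} to the equation $(1-x^2)U_n''-3xU_n'+n(n+2)U_n=0$, and deducing simplicity of critical points from the trigonometric root formula plus Rolle's theorem; you instead use the ODE $(x^2+4B)G_n''+3xG_n'-(n^2-1)G_n=0$ for $G_n$ itself, which is tidier, but your simplicity argument silently assumes $G_n$ is squarefree --- that fact still needs the Chebyshev description (or a resultant/induction argument), so it should be stated, not assumed. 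Second, and more substantially, the paper stops at the reduction and defers all exceptional cases to \cite{DT01} (``See \cite{DT01} for details''), whereas you actually carry them out: coefficient comparison against the Dickson pairs of Theorem~\ref{DEM22} for the small degrees (here the decisive point, which you correctly identify, is that the shared $e_1, e_0$ and $a$ in \eqref{morse2} over-determine the parameters uniformly in $B$), the Ritt--Engstrom reduction of Section~\ref{dioph} together with a root-configuration comparison for the case $G_m=G_n\circ\nu$, and Siegel's theorem for $n=3$ --- a case you rightly note lies outside Theorem~\ref{DEM2} entirely since $\deg G_3=2$, and which the paper's sketch glosses over. Two of your steps need tightening to be complete: the ``even--odd index obstruction'' as stated only rules out symmetry of $\{\cos(2j\pi/(2n-1))\}_{j=1}^{n-1}$ about $0$, but affine equivalence requires ruling out symmetry about \emph{any} point; this does work, since order-reversing symmetry would force $\cos\bigl((2j-n)\pi/(2n-1)\bigr)$ to be independent of $j$ (sum-to-product), which fails for $n\geq 4$. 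Likewise the specific numerical contradictions for the small-degree pairs are asserted rather than derived, though the method is sound and must succeed since the statement is true. In short: same skeleton as the paper, but your version is more self-contained, at the cost of several sketched computations that a referee would ask you to write out.
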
  

Theorem~\ref{DT} is due to Dujella and Tichy~\cite{DT01}. It is easy to check that $G_n(x)=\mu_1(U_{n-1}(\mu_2(x)))$, where $\mu_1, \mu_2\in K[x]$ are linear polynomials and $U_n$ is the $n$-th Chebyshew polynomial of the second kind, given by a differential equation $(1-x^2)U_n''(x)-3xU_n'(x)'+n(n+2)U_n(x)=0$. One easily finds that $U_{n}$ has simple real roots (since $U_{n}(\cos x)=\sin (n+1)x / \sin x)$, and thus simple critical points as well by Rolle's theorem.  In a similar way as in Lemma~\ref{BST}, Dujella and Tichy showed $U_{n}$ has equal critical values at at most two distinct critical points. Since $(1-x^2)U_n''(x)-3xU_n'(x)'+n(n+2)U_n(x)=0$ and $(1-x^2)'+2\cdot 3x$ does not vanish, this immediately follows by  Lemma~\ref{diffS}.
Thus, Theorem~\ref{DT} follows to the most part by Theorem~\ref{DEM22}. 
(As usual, it remains to analyse the cases $(m, n)\in \{(4, 3), (5, 3), (5, 4), (6, 4)\}$ and the case $G_m(x)=G_n(\nu(x))$, where $\nu$ is quadratic. See \cite{DT01} for details.)

It seems likely that the well-known Bernoulli and Euler polynomials satisfy the conditions of Theorem~\ref{DEM22}. As is well known, the $k$-th power sum of the first $n-1$ positive integers $S_k(n)=1^k+2^k+\cdots+(n-1)^k$ and the alternating $k$-th power sum of the first $n-1$ positive integers $T_k(n)=-1^k+2^k+\cdots+(-1)^{n-1}(n-1)^k$  can be expressed in terms of Bernoulli polynomial $B_k(x)$ and Euler polynomials $E_k(x)$, as
\[
S_k(n)=\frac{1}{k+1}\left(B_{k+1}(n)-B_{k+1}\right), \quad T_k(n)=\frac{1}{2}\left(E_k(0)+(-1)^{n-1} E_k(n)\right).
\]

In various papers, of which we mention \cite{BKLP12, BBKPT02, KR13}, equations of type $\mu_1(B_k(\mu_2(x)))=\lambda_1(B_n(\lambda_2(x)))$, and $\mu_1(E_k(\mu_2(x)))=\lambda_1(E_n(\lambda_2(x)))$, where $\mu_i, \lambda_i\in \Q[x]$ are linear and $k, n\geq 3$,
have been studied, corresponding to equations with the above introduced power sums. We do not have a proof at hand, but if  Bernoulli and Euler polynomials are such that they have equal critical values at at most two distinct critical points, then Theorem~\ref{DEM22} would yield a unifying proof of the results in these papers. It is well known that Bernoulli polynomials have simple roots and that $B_n'(x)=nB_{n-1}(x)$, so that they have all simple critical points as well. Also, $E_n'(x)=nE_{n-1}(x)$ and the only Euler polynomial with a multiple root is of degree $5$ and has one simple root and two double roots.  If  Bernoulli and Euler polynomials are such that at least they have equal critical values at at most two distinct critical points,  then Theorem~\ref{DEM22} would also  apply to equations of type $\mu_1(B_k(\mu_2(x)))=\lambda_1(E_n(\lambda_2(x)))$ with linear $\mu_i, \lambda_i\in \Q[x]$.

\subsection*{Acknowledgements}
The authors are grateful for the support of the Austrian Science Fund (FWF) via projects W1230-N13, FWF-P24302 and F5510.

\bibliographystyle{amsplain}
\bibliography{Intro}

\end{document}